\newtheorem{thm}{Theorem}[section]
\newtheorem{cor}[thm]{Corollary}
\newtheorem{lem}[thm]{Lemma}
\newtheorem{prop}[thm]{Proposition}
\theoremstyle{definition}
\newtheorem{defn}[thm]{Definition}
\newtheorem{question}[thm]{Question}
\theoremstyle{remark}
\newtheorem{rem}[thm]{Remark}
\newtheorem{example}[thm]{Example}
\numberwithin{equation}{section}
\newcommand{\fbl}{\text{FBL}}
\newcommand{\R}{\mathbb{R}}
\newcommand{\N}{\mathbb{N}}
\DeclareMathOperator{\lat}{lat}
\DeclareMathOperator{\Hom}{Hom}
\DeclareMathOperator{\FBL}{FBL}
\begin{document}
\setcounter{tocdepth}{1}

\title[Norm-attaining lattice homomorphisms]{Norm-attaining lattice homomorphisms and renormings of Banach lattices}

\author[Bilokopytov]{Eugene Bilokopytov}
\address{Department of Mathematical and Statistical Sciences, University of Alberta, Edmonton, Canada.
\newline
	\href{https://orcid.org/0000-0001-7075-886X}{ORCID: \texttt{0000-0001-7075-886X} } }
\email{bilokopy@ualberta.ca}

\author[Garc\'ia-S\'anchez]{Enrique Garc\'ia-S\'anchez}
\address{Instituto de Ciencias Matem\'aticas (CSIC-UAM-UC3M-UCM)\\
Consejo Superior de Investigaciones Cient\'ificas\\
C/ Nicol\'as Cabrera, 13--15, Campus de Cantoblanco UAM\\
28049 Madrid, Spain.
\newline
	\href{https://orcid.org/0009-0000-0701-3363}{ORCID: \texttt{0009-0000-0701-3363} } }
\email{enrique.garcia@icmat.es}

\author[de Hevia]{David de Hevia}
\address{Instituto de Ciencias Matem\'aticas (CSIC-UAM-UC3M-UCM)\\
Consejo Superior de Investigaciones Cient\'ificas\\
C/ Nicol\'as Cabrera, 13--15, Campus de Cantoblanco UAM\\
28049 Madrid, Spain.}
\email{david.dehevia@icmat.es}

\author[Mart\'inez-Cervantes]{Gonzalo Mart\'inez-Cervantes}
\address[]{Universidad de Murcia, Departamento de Matem\'{a}ticas, Campus de Espinardo 30100 Murcia, Spain.
	\newline
	\href{http://orcid.org/0000-0002-5927-5215}{ORCID: \texttt{0000-0002-5927-5215} } }	
\email{gonzalo.martinez2@um.es}

\author[Tradacete]{Pedro Tradacete}
\address{Instituto de Ciencias Matem\'aticas (CSIC-UAM-UC3M-UCM)\\
Consejo Superior de Investigaciones Cient\'ificas\\
C/ Nicol\'as Cabrera, 13--15, Campus de Cantoblanco UAM\\
28049 Madrid, Spain.
	\newline
	\href{https://orcid.org/0000-0001-7759-3068}{ORCID: \texttt{0000-0001-7759-3068} }}
\email{pedro.tradacete@icmat.es}

\keywords{Banach lattice; Free Banach lattice; Norm attainment; James theorem; AM-space.}

\subjclass[2020]{46B20, 46B42, 46B04}

\begin{abstract}
A well-known theorem due to R. C. James states that a Banach space is reflexive if and only if every bounded linear functional attains its norm. In this note we study Banach lattices on which every (real-valued) lattice homomorphism attains its norm. Contrary to what happens in the Banach space setting, we show that this property is not invariant under lattice isomorphisms.
Namely, we show that in an AM-space every lattice homomorphism attains its norm, whereas every infinite-dimensional $C(K)$ space admits an equivalent lattice norm with a lattice homomorphism which does not attain its norm.
Furthermore, we characterize coordinate functionals of atoms and show that whenever a Banach lattice $X$ supports a strictly positive functional, there exists a renorming with the property that the only (non-trivial) lattice homomorphisms attaining their norm are precisely these coordinate functionals. 
As a consequence, one can exhibit examples of Dedekind complete Banach lattices admitting a renorming with a non-norm-attaining lattice homomorphism, answering negatively questions posed by Dantas, Rodr\'iguez Abell\'an, Rueda Zoca and the fourth author.
\end{abstract}

\maketitle

\section{Introduction}

The study of norm attainment for linear functionals and operators is a fundamental aspect of functional analysis, with profound implications in the theory of Banach spaces. One of the cornerstone results in this area is James' Theorem \cite{J}, which provides a characterization of reflexive Banach spaces. Namely, it asserts that a Banach space is reflexive if and only if every bounded linear functional attains its norm, where a functional $x^* \in X^*$ in a dual Banach space is said to attain its norm if there exists $x\in B_X$ such that $x^*(x)=\|x^*\|$. This theorem has served as a pivotal point of reference in understanding the geometry of Banach spaces and the behavior of linear functionals.\smallskip
	
In the context of Banach lattices, the situation becomes more intricate. Banach lattices, which are Banach spaces equipped with a lattice structure, provide a natural setting where the questions about norm attainment for positive functionals and lattice homomorphisms open up new avenues of exploration that extend beyond the traditional Banach space framework. \smallskip
	
Lattice homomorphisms are precisely the operators between Banach lattices that commute with the lattice operations, and are fundamental to understand this theory. Classical developments on lattice homomorphisms include duality relations, extension properties, spectra... We refer the reader to \cite{Meyer} as a reference for these results. More recently, lattice homomorphisms have been the focus of research in the development of the theory of free, injective and projective Banach lattices (see \cite{AMR20,AMRR22b,ART18,AT,GLTT,OTTT,dPW15}). \smallskip

In recent years, there has been an increasing interest in studying norm attainment in the context of positive functionals, as shown in \cite{ASM}, \cite{LV} and \cite{OT} for example. In this paper, we will focus on investigating such a problem for the particular case of lattice homomorphisms, continuing the work initiated in \cite{DMCRARZ}. \smallskip

First, we begin by studying a special class of lattice homomorphisms, namely the coordinate functionals of atoms. Throughout Section \ref{section:Coordinate functionals on a Banach lattice} we provide several characterizations of these lattice homomorphisms and we show that every lattice homomorphism on an order-continuous Banach lattice belongs to this family (Corollary \ref{cor:coordinate}). Coordinate functionals have an interesting property: not only do they attain their norm, but also they are norm-attaining for any lattice renorming of the Banach lattice. \smallskip

This leads to the following question (which is the motivation for Section \ref{section:Stability of norm attainment of lattice homomorphism under renormings}): if a lattice homomorphism $x^*$ on a Banach lattice $X$ is not a coordinate functional, is there a lattice renorming of $X$ for which $x^*$ does not attain its norm? It turns out that in many cases this is indeed true: if a Banach lattice $X$ has a strictly positive functional, then $X$ may be renormed in such a way that the only lattice homomorphisms which attain their norm are coordinate functionals of atoms (Theorem \ref{prop:strictly-positive-functional}). This theorem tells us how we can renorm $\ell_\infty$ (or $L_\infty[0,1]$) in order to obtain a Dedekind complete Banach lattice with lattice homomorphisms which do not attain their norm. This answers in the negative a question posed in \cite{DMCRARZ}. Returning to the question we asked at the beginning of the paragraph, we also provide an example of a lattice homomorphism which does attain its norm for any lattice renorming despite not being a coordinate functional (Example \ref{ex:omega1}). What is true in general is that if a lattice homomorphism attains its norm always at the same (suitably normalized) vector, then it must be a coordinate functional (Corollary \ref{cor:converse-coordinate functionals}). \smallskip

Next, we devote Sections \ref{sec:AMspaces} and \ref{section:renormings AMspaces} to understanding norm-attaining lattice homomorphisms in AM-spaces. Recall that by Kakutani's representation theorem \cite{Kakutani-AM spaces}, every AM-space may be identified (in a lattice isometric way) with a sublattice of a certain $C(K)$-space. Thus, it is interesting to know what properties of $C(K)$-spaces can be extended to AM-spaces. A first similarity to be noted is that AM-spaces do have a \textit{large} set of lattice homomorphisms; in fact, AM-spaces can be described as those Banach lattices whose set of lattice homomorphisms is $1$-norming (Proposition \ref{prop:norming}). It is clear that in any $C(K)$-space every lattice homomorphism (which must be a positive multiple of the evaluation at a point in $K$) attains its norm at the constant function $\mathbf{1}_K$. As AM-spaces do not necessarily possess a strong unit, it is not a priori obvious that their lattice homomorphisms must attain their norm. Despite this, we prove in Theorem \ref{THMAM} that every lattice homomorphism on an AM-space does attain its norm. We deduce this result as an easy consequence of a characterization of norm-attaining lattice homomorphisms which is relevant in its own right (Proposition \ref{prop:characterizationLatNA}). Regarding lattice renormings, it is a simple observation that every infinite-dimensional $C(K)$-space admits an equivalent lattice norm with a non-norm-attaining lattice homomorphism (Proposition \ref{PropNotNA}); in fact, this is the case for all AM-spaces aside of the spaces of the form $c_0(\Gamma)$  (Theorem \ref{thm:several}). \smallskip

Sections \ref{section:Free Banach lattices over Banach spaces} and \ref{Section:FBLlattices} are devoted to another class of Banach lattices with large sets of lattice homomorphisms: free Banach lattices. In Section \ref{section:Free Banach lattices over Banach spaces} we show that free $p$-convex Banach lattices generated by Banach spaces also admit renormings with non-norm-attaining lattice homomorphism (Corollary \ref{cor:FBL}), and we revisit the problem of characterizing the norm-attaining lattice homomorphisms of $\fbl^{(p)}[E]$ established in \cite[Conjecture 5.5]{DMCRARZ} (see Question \ref{QuestionFBL}). Finally, motivated by this discussion, we consider in Section \ref{Section:FBLlattices} free $p$-convex Banach lattices over lattices, $\fbl^{(p)}\langle\mathbb{L}\rangle$, introduced in \cite{AR-A19}. In contrast to what happens for free Banach lattices over Banach spaces, every lattice homomorphism on $\fbl^{(p)}\langle\mathbb{L}\rangle$ (for any lattice $\mathbb{L}$) is norm-attaining (Proposition \ref{prop:normattainingFBLlattice}). We obtain this last result as a consequence of the aforementioned characterization of norm-attaining lattice homomorphisms (Proposition \ref{prop:characterizationLatNA}). \smallskip

\section{Coordinate functionals on a Banach lattice}\label{section:Coordinate functionals on a Banach lattice}
	
A \textit{Banach lattice} is a Banach space $X$ equipped with a partial order $\leq$ such that  $(X, \leq)$ is a vector lattice and the norm $\| \cdot \|$ on $X$ is a lattice norm, i.e., if $|x| \leq |y|$, then $\|x\| \leq \|y\|$. The dual of a Banach lattice is again a Banach lattice, where a (linear) functional $x^* \in X^*$ is \textit{positive} whenever it sends positive elements to positive numbers, i.e.~if $x^*(x)\geq0$ whenever $x\geq 0$.
Positive functionals may not preserve infima and suprema. In fact, those functionals $x^*\in X^*$ satisfying $x^*(x\wedge y)=x^*(x) \wedge x^*(y)$ (or equivalently, $x^*(x\vee y)=x^*(x) \vee x^*(y)$) are called \textit{lattice homomorphisms}. The set of all functionals in $X^*$ that are lattice homomorphisms is denoted by $\Hom(X, \R)$, whereas the set of all functionals in $X^*$ that attain their norm is denoted by $\text{NA}(X,\mathbb{R})$. We refer the reader to \cite{LinTza2, Meyer} for unexplained terminology about Banach lattices and lattice homomorphisms.\medskip

As we mentioned in the introduction, the central topic of the present work are lattice homomorphisms that attain their norm. Later in the paper some specific classes of Banach lattices will be studied, but first we focus our scope on general facts about the stability of the norm attainment of lattice homomorphisms under lattice renormings of a Banach lattice (by \textit{lattice renorming} we mean a Banach lattice norm $|||\cdot|||$ with respect to the same order and lattice operations). It turns out that coordinate functionals of an atom are lattice homomorphisms that always attain the norm, independently of the lattice renorming, as will be shown in the following proposition. \medskip

Recall that the ideal generated by an atom $x_{0}\in X_{+}$ is always a projection band and its corresponding band projection $P_{x_0}:X\to X$ is given by $P_{x_0}(x)=\sup_{n}(x\land nx_0)$, for $x\in X_+$. We say that  $\lambda_{x_0}\in X^{*}$ is the \textit{coordinate functional} of $x_{0}$ if $P_{x_0}x=\lambda_{x_0}\left(x\right)x_{0}$, for every $x\in X$. In particular, in this case $\lambda_{x_0}$ is a lattice homomorphism, $\ker \lambda_{x_0}=x_{0}^{d}$, and $\lambda_{x_0}\left(x_{0}\right)=1$. Note that a positive multiple of a coordinate functional of an atom is also a coordinate functional of an atom (as for $r>0$, we would have $P_{x_0}x=r\lambda_{x_0}\left(x\right)\frac{1}{r} x_{0}$ for $x\in X$, so $r\lambda_{x_0}=\lambda_{\frac{1}{r}x_0}$).

\begin{prop}\label{prop:coordinate}
Let $x^*$ be a non-zero lattice homomorphism on a Banach lattice $X$, and let $x_{0}\in X_{+}$ be an atom. The following conditions are equivalent.
\begin{enumerate}
\item $x^*$ is a (positive) multiple of the coordinate functional of $x_{0}$;
\item For any lattice norm on $X$, $x^*$ attains its norm at a multiple of $x_{0}$;
\item There is a lattice norm on $X$ such that $x^*$ attains its norm at $x_{0}$;
\item $x^*(x_{0})>0$;
\item $x_{0}^{d}\subset \ker x^*$.\medskip
\end{enumerate}
In this case, $y^*\in X^*$ is disjoint with $x^*$ if and only if $y^*(x_0)=0$.
\end{prop}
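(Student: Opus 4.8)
The plan is to prove the chain of equivalences by going through a convenient cycle, and then to deduce the final "disjointness" statement from the equivalences together with a standard fact about lattice homomorphisms. I would arrange the implications as $(iv)\Leftrightarrow(v)$, then $(v)\Rightarrow(i)$, then $(i)\Rightarrow(ii)\Rightarrow(iii)\Rightarrow(iv)$. The equivalence $(iv)\Leftrightarrow(v)$ is essentially immediate: since $x^*$ is a lattice homomorphism its kernel is an ideal (indeed a prime ideal), and $X$ decomposes as the direct sum of the projection band $B_{x_0}=\{rx_0:r\in\R\}$ generated by the atom and its disjoint complement $x_0^d$; so $x^*$ is determined by its values on these two pieces, $x^*$ vanishes on $x_0^d$ precisely when $\ker x^*\supseteq x_0^d$, and given that $x^*\neq 0$ this forces $x^*(x_0)\neq 0$, with positivity of $x^*$ giving $x^*(x_0)>0$. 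Conversely $x^*(x_0)>0$ forces $x_0\notin\ker x^*$, and since $\ker x^*$ is a prime ideal not containing the atom $x_0$ it must be contained in $x_0^d$ (any $y\perp x_0$ satisfies $|y|\wedge x_0=0\in\ker x^*$, and primeness of the kernel plus $x_0\notin\ker x^*$ gives $|y|\in\ker x^*$, hence $y\in\ker x^*$).

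For $(v)\Rightarrow(i)$: assuming $x_0^d\subseteq\ker x^*$, write an arbitrary $x\in X$ as $x=\lambda_{x_0}(x)x_0 + z$ with $z=x-P_{x_0}x\in x_0^d$; applying $x^*$ gives $x^*(x)=\lambda_{x_0}(x)\,x^*(x_0)$, so $x^*=x^*(x_0)\lambda_{x_0}$, and by $(iv)$ (equivalent to $(v)$) the scalar $x^*(x_0)$ is positive, so $x^*$ is a positive multiple of $\lambda_{x_0}$, which by the remark preceding the proposition is again a coordinate functional of an atom. The implication $(i)\Rightarrow(ii)$ is the heart of the matter and the step I expect to be the main obstacle. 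Suppose $x^*=c\,\lambda_{x_0}$ with $c>0$, and fix any lattice norm $|||\cdot|||$ on $X$. I would show that $x^*$ attains $|||x^*|||$ at the vector $x_0/|||x_0|||$. The key inequality is that for every $y\in B_X^{|||\cdot|||}$ one has $|x^*(y)|=|x^*(|y|)|=c\,\lambda_{x_0}(|y|)=c\,\lambda_{x_0}(P_{x_0}|y|)$, and $P_{x_0}|y|=\lambda_{x_0}(|y|)x_0$ satisfies $0\le P_{x_0}|y|\le|y|$, hence $|||P_{x_0}|y|\,|||\le|||y|||\le 1$; thus $\lambda_{x_0}(|y|)\cdot|||x_0|||\le 1$, i.e. $|x^*(y)|\le c/|||x_0|||=x^*\bigl(x_0/|||x_0|||\bigr)$. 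This shows simultaneously that $|||x^*|||\le c/|||x_0|||$ and that this value is attained at $x_0/|||x_0|||$ (and the reverse inequality $|||x^*|||\ge x^*(x_0/|||x_0|||)$ is trivial), so $x^*$ attains its norm there, at a multiple of $x_0$. Then $(ii)\Rightarrow(iii)$ is trivial (any fixed lattice norm works, rescaling $x_0$ if needed — note that if $x^*$ attains its norm at $rx_0$ for some $r$, then after replacing the norm by a positive multiple we may assume it is attained exactly at $x_0$; alternatively one simply renormalizes), and $(iii)\Rightarrow(iv)$ is immediate since $x^*$ attaining its norm at $x_0$ with $x^*$ a nonzero positive functional forces $x^*(x_0)=\|x^*\|>0$.

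Finally, for the concluding assertion: assume $(i)$–$(v)$ hold, so $x^*=c\,\lambda_{x_0}$ with $c>0$. If $y^*\in X^*$ is disjoint from $x^*$ in $X^*$, then $|y^*|\wedge|x^*|=0$; evaluating at $x_0\ge 0$ and using that $x_0$ is an atom together with the formula $(|y^*|\wedge|x^*|)(x_0)=\inf\{|y^*|(u)+|x^*|(v): u,v\ge 0,\ u+v=x_0\}$, together with the fact that any such decomposition of the atom $x_0$ has $u=sx_0$, $v=(1-s)x_0$ for some $s\in[0,1]$, one gets $0=(|y^*|\wedge|x^*|)(x_0)=\inf_{s\in[0,1]}\bigl(s\,|y^*|(x_0)+(1-s)c\bigr)$, which (since $c>0$) forces $|y^*|(x_0)=0$, hence $y^*(x_0)=0$. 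Conversely, if $y^*(x_0)=0$ I would argue $y^*$ vanishes on the band $B_{x_0}$ generated by $x_0$: indeed on $B_{x_0}=\R x_0$ both $x^*$ and $y^*$ are determined by their value at $x_0$, so $y^*\flag_{B_{x_0}}=0$ while $x^*$ is supported on $B_{x_0}$ (it vanishes on $x_0^d$). More precisely, since $B_{x_0}$ is a projection band, $X^*=B_{x_0}^*\oplus (x_0^d)^*$ accordingly; $x^*$ lives in the first summand and $y^*(x_0)=0$ means the $B_{x_0}$-component of $y^*$ vanishes, so $y^*$ lives in the second summand, and the two summands are mutually disjoint bands in $X^*$. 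Hence $y^*\perp x^*$. The possible obstacle here is justifying the disjointness via the band decomposition of $X^*$ cleanly; if that is delicate one can instead compute $|y^*|\wedge|x^*|$ directly on positive vectors $x$ using $P_{x_0}x=\lambda_{x_0}(x)x_0$ and the decomposition $x=\lambda_{x_0}(x)x_0+(x-P_{x_0}x)$, again reducing to the one-dimensional band.
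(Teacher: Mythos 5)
Your proof is correct and follows essentially the same route as the paper: the band projection inequality $|||P_{x_0}y|||\le|||y|||$ for (i)$\Rightarrow$(ii), the lattice-homomorphism identity $x^*(|y|\wedge x_0)=|x^*(y)|\wedge x^*(x_0)$ for (iv)$\Rightarrow$(v), and the Riesz--Kantorovich formula combined with the fact that $[0,x_0]=\{sx_0:s\in[0,1]\}$ for the disjointness claim. The only cosmetic differences are the ordering of the cycle of implications and your use of the dual band decomposition $X^*=B_{x_0}^{\circ}\oplus(x_0^d)^{\circ}$ for the converse of the last assertion, where the paper instead evaluates $|y^*|\wedge x^*$ on $x_0$ and on $x_0^d$ separately.
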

\begin{proof}
(i)$\Rightarrow$(ii): We may assume that $x^*=\lambda_{x_0}$ (the coordinate functional of $x_{0}$) and let $P_{x_0}$ denote its corresponding band projection. For any norm $\|\cdot\|$ on $X$ and any $x\in X$ we have $\|x\|\ge\|P_{x_0}x\|=\|\lambda_{x_0}\left(x\right)x_{0}\|=\left|\lambda_{x_0}\left(x\right)\right|\|x_{0}\|$. It follows that $$\|\lambda_{x_0}\|\le \frac{1}{\|x_{0}\|}=\frac{\lambda_{x_0}\left(x_{0}\right)}{\|x_{0}\|}= \lambda_{x_0}\left(\frac{x_0}{\|x_0\|}\right)\leq \|\lambda_{x_0}\|,$$ and so $\lambda_{x_0}$ attains its norm at $\frac{x_{0}}{\|x_0\|}$.\medskip

(ii)$\Rightarrow$(iii) is trivial in light of the fact that $X$ is a Banach lattice, so it admits at least one lattice norm. \medskip

(iii)$\Rightarrow$(iv) is trivial.\medskip

(iv)$\Rightarrow$(v): For any $x\bot x_{0}$ we have 
$$
\left|x^*\left(x\right)\right|\wedge x^*\left(x_0\right)=x^*\left(\left|x\right|\wedge x_{0}\right)=x^*\left(0\right)=0,
$$ 
and since $x^*\left(x_0\right)>0$ we conclude that $x\in\ker x^*$.\medskip

(v)$\Rightarrow$(i): Since $x_{0}^{d}$ is of codimension $1$ and $x^*\neq 0$, it follows that $x_{0}^{d}=\ker x^*$. As $x^*$ has the same kernel as the coordinate functional of $x_{0}$, we conclude that the two are multiples of each other.\medskip

Now we assume that $0\leq y^*\bot x^*$. By Riesz-Kantorovich formula, for every $n\in\N$ there is $y_n\in[0,x_0]$ such that $x^*(y_n)\le \frac{1}{n}$ and $y^*(x_0-y_n)\le \frac{1}{n}$. Since $x_0$ is an atom there is $r_n\geq 0$ such that $y_n=r_n x_0$. We then have $r_nx^*(x_0)=x^*(y_n)\le \frac{1}{n}$, hence $r_n\to 0$, and $(1-r_n)y^*(x_0)=y^*(x_0-y_n)\le \frac{1}{n}$. The last inequality yields $y^*(x_0)\le \frac{1}{n(1-r_n)}\to 0$. Thus, $y^*(x_0)=0$ as desired. Without the assumption that $y^*\geq 0$ we still have that $|y^*|\bot x^*$, hence $0=-|y^*|(x_0)\leq y^*(x_0)\leq |y^*|(x_0)=0$.

In order to prove the converse, observe that, by Riesz-Kantorovich, $|y^*|(x_0)=0$, and then using it again we have that $\left[|y^*|\wedge x^*\right](x_0)=0=\left[|y^*|\wedge x^*\right](x)$, for any $x\bot x_0$. It follows that $|y^*|\wedge x^*=0$.
\end{proof}

We now present some criteria for a lattice homomorphism on a vector lattice to be a coordinate functional of an atom.

\begin{prop}\label{prop:coordinate VL}
Let $x^*\ne 0$ be a lattice homomorphism on a vector lattice $X$. The following conditions are equivalent.
\begin{enumerate}
\item There exists an atom $x_0\in X$ such that $x^*(x_0)>0$.
\item $x^*$ is a coordinate functional of an atom in $X$;
\item $\ker x^*$ is a projection band;
\item $\ker x^*$ is not order dense;
\item $x^*$ is order continuous.
\end{enumerate}
\end{prop}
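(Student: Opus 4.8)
The plan is to establish the cycle (i)$\Rightarrow$(ii)$\Rightarrow$(iii)$\Rightarrow$(iv)$\Rightarrow$(i), and then to incorporate the order‑continuity condition through the two implications (ii)$\Rightarrow$(v)$\Rightarrow$(iii). Throughout I will use freely that $X$ is Archimedean, which is already needed for $\spann\{x_0\}$ (the ideal generated by an atom $x_0$) to be a projection band: without this hypothesis the statement is false, as the first‑coordinate functional on $\R^2$ with the lexicographic order is an order continuous lattice homomorphism which is not a coordinate functional of an atom.

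For (i)$\Rightarrow$(ii): given an atom $x_0$ with $x^*(x_0)>0$, recall that $\spann\{x_0\}$ is a projection band and let $\lambda_{x_0}$ be its coordinate functional, so $\ker\lambda_{x_0}=x_0^d$ is a hyperplane. For $v\bot x_0$, since $x^*$ is a lattice homomorphism and $x^*(x_0)>0$ we get $|x^*(v)|\wedge x^*(x_0)=x^*(|v|\wedge x_0)=x^*(0)=0$, hence $x^*(v)=0$; thus $x_0^d\sub\ker x^*$, and as $x_0^d$ is a hyperplane and $x^*\ne 0$ we conclude $\ker x^*=x_0^d=\ker\lambda_{x_0}$. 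Therefore $x^*$ is a positive multiple of $\lambda_{x_0}$, i.e. the coordinate functional of the atom $\frac{1}{x^*(x_0)}\,x_0$. Then (ii)$\Rightarrow$(iii) is immediate: $\ker x^*=\ker\lambda_{x_0}=x_0^d=(\spann\{x_0\})^d$ is the disjoint complement of a projection band, hence itself a projection band. For (iii)$\Rightarrow$(iv): from $X=\ker x^*\oplus(\ker x^*)^d$ and $x^*\ne 0$ we obtain $(\ker x^*)^d\ne\{0\}$; choosing $0\ne z$ there, no $0<v\le|z|$ can lie in $\ker x^*$ (such $v$ would be disjoint from $z$ yet $\le|z|$), so $\ker x^*$ is not order dense.

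The step I expect to be the main obstacle is (iv)$\Rightarrow$(i). If $\ker x^*$ is not order dense, fix $u>0$ such that no $0<v\le u$ belongs to $\ker x^*$; in particular $u\notin\ker x^*$, so by positivity $x^*(u)>0$. I claim $u$ is an atom. Given $0\le v\le u$, put $t=x^*(v)/x^*(u)\in[0,1]$, so that $x^*(v-tu)=0$; since $x^*$ is a lattice homomorphism, both $(v-tu)^+$ and $(v-tu)^-$ lie in $\ker x^*$. On the other hand $0\le(v-tu)^+\le(1-t)u\le u$ and $0\le(v-tu)^-\le tu\le u$, so the choice of $u$ forces $(v-tu)^+=(v-tu)^-=0$, i.e. $v=tu$. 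Hence $u$ is an atom with $x^*(u)>0$, which is (i). The delicate point here is exactly verifying that the positive and negative parts of $v-tu$ stay below $u$.

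Finally, for (ii)$\Rightarrow$(v): writing $x^*=c\,\lambda_{x_0}$ with $c>0$ and $x_0$ an atom, band projections are order continuous, so if $x_\alpha\downarrow 0$ then $\lambda_{x_0}(x_\alpha)x_0=P_{x_0}x_\alpha\downarrow 0$, and since $x_0>0$ this forces $\lambda_{x_0}(x_\alpha)\downarrow 0$; thus $x^*$ is order continuous. For (v)$\Rightarrow$(iii): $\ker x^*$ is an ideal (being the kernel of a lattice homomorphism), and if $0\le y_\alpha\uparrow y$ with $y_\alpha\in\ker x^*$ then order continuity gives $x^*(y)=\sup_\alpha x^*(y_\alpha)=0$, so $\ker x^*$ is a band. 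As $X$ is Archimedean, $\ker x^*=(\ker x^*)^{dd}$, and since $x^*\ne 0$ this forces $(\ker x^*)^d\ne\{0\}$; picking $0\ne w\in(\ker x^*)^d\subseteq X\setminus\ker x^*$ and using that $\ker x^*$ is a hyperplane, we obtain $X=\ker x^*\oplus(\ker x^*)^d$, so $\ker x^*$ is a projection band. This closes the circle of implications.
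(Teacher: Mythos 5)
Your proposal is correct, and it follows the same overall architecture as the paper (the cycle through (i)--(iv) plus two implications tying in (v)), but it differs genuinely on the key step (iv)$\Rightarrow$(i). The paper argues structurally: if $\ker x^*$ is not order dense then $(\ker x^*)^d$ is a non-trivial ideal meeting the codimension-one ideal $\ker x^*$ trivially, hence one-dimensional, hence the span of an atom at which $x^*$ does not vanish. You instead take an arbitrary witness $u>0$ to the failure of order density and show that $u$ itself is an atom, via the computation that for $0\le v\le u$ and $t=x^*(v)/x^*(u)$ both $(v-tu)^+$ and $(v-tu)^-$ lie in $[0,u]\cap\ker x^*$ and must therefore vanish; this is slightly longer but more constructive, and for this particular implication it avoids the Archimedean-dependent equivalence between ``not order dense'' and ``non-trivial disjoint complement''. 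You also close the loop for (v) via (v)$\Rightarrow$(iii) (the kernel is a band, hence by $B=B^{dd}$ and codimension one a projection band), where the paper uses the one-line (v)$\Rightarrow$(iv) (``a proper band cannot be order dense''); both are valid in the Archimedean setting. Finally, your observation that the statement requires $X$ to be Archimedean --- with the lexicographic plane furnishing an order continuous lattice homomorphism that is not a coordinate functional of an atom --- is correct and identifies a hypothesis the paper leaves implicit (both its use of the fact that the ideal generated by an atom is a projection band and its step (v)$\Rightarrow$(iv) fail without it).
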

\begin{proof}
(i)$\Rightarrow$(ii) follows from Proposition \ref{prop:coordinate} (the relevant implications (iv)$\Rightarrow$(v)$\Rightarrow$(i) hold for any vector lattice).\medskip 

(ii)$\Rightarrow$(iii)$\Rightarrow$(iv) are straightforward.\medskip

(iv)$\Rightarrow$(i): $\ker x^*$ is an ideal of codimension $1$. If it is not order dense, then $\left(\ker x^*\right)^{d}$ is a non-trivial ideal. It then must have dimension $1$, and so it is the span of an atom, say $x_0$. In particular, $x^*(x_{0})\ne 0$.\medskip

(ii)$\Rightarrow$(v): There is an atom $x_0\in X$ such that $Px=x^*\left(x\right)x_{0}$, for every $x\in X$. Since $P$ is order continuous, it follows that so is $x^*$.\medskip

(v)$\Rightarrow$(iv): It follows that $\ker x^*$ is a proper band, and so cannot be order dense.
\end{proof}

We can add some additional conditions to the characterization above when the domain is a Banach lattice instead of a vector lattice.

\begin{prop}\label{prop:coordinate BL}
Let $x^*\ne 0$ be a lattice homomorphism on a Banach lattice $X$. The following conditions are equivalent.
\begin{enumerate}
\item $x^*$ is a coordinate functional of an atom in $X$;
\item $x^{*d}$ is $w^*$-closed in $X^*$;
\item $x^{*d}\cap B_{X^*}$ is $w^*$-compact;
\item $x^*\notin\overline{x^{*d}}^{w^*}$.
\end{enumerate}
\end{prop}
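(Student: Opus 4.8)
The plan is to compare the abstract disjoint-complement condition in $X^*$ with the concrete pictures available from Proposition \ref{prop:coordinate}. Recall first that $x^{*d}=\{y^*\in X^*:|y^*|\wedge|x^*|=0\}$ is always an ideal, hence a band in $X^*$ since $X^*$ is Dedekind complete; so the real content is topological. I would first establish (i)$\Rightarrow$(ii): if $x^*=\lambda_{x_0}$ is the coordinate functional of an atom $x_0$, then by the last sentence of Proposition \ref{prop:coordinate} we have $x^{*d}=\{y^*\in X^*:y^*(x_0)=0\}=\{x_0\}^\perp$, which is precisely the kernel of the $w^*$-continuous functional (evaluation at) $x_0\in X=X^{**}\cap w^*\text{-cont}$, hence $w^*$-closed. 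The implication (ii)$\Rightarrow$(iii) is then immediate from Banach--Alaoglu: $x^{*d}\cap B_{X^*}$ is a $w^*$-closed subset of the $w^*$-compact ball, and (iii)$\Rightarrow$(iv) is also immediate, since if $x^*$ were in the $w^*$-closure of $x^{*d}$ it would, after scaling to lie in a fixed multiple of the ball, be a $w^*$-limit of points of the $w^*$-compact set $x^{*d}\cap rB_{X^*}$ and hence belong to $x^{*d}$, contradicting $x^*\wedge x^*=x^*\neq 0$ (here one uses that $x^*\geq 0$ may be assumed, since a lattice homomorphism is positive).

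The substance of the argument is the closing implication (iv)$\Rightarrow$(i). The strategy is contrapositive via Proposition \ref{prop:coordinate BL}'s companion Proposition \ref{prop:coordinate VL}: if $x^*$ is \emph{not} a coordinate functional of an atom, then (by (iv)$\Leftrightarrow$(i) of Proposition \ref{prop:coordinate VL}) $\ker x^*$ is order dense in $X$, and I want to deduce $x^*\in\overline{x^{*d}}^{w^*}$. The natural approach is to produce a net $(y_\alpha^*)$ in $x^{*d}$ with $y_\alpha^*\to x^*$ weak-$*$, i.e.\ $y_\alpha^*(x)\to x^*(x)$ for every $x\in X$. Since $\ker x^*$ is order dense, every $x\in X_+$ is the supremum of the upward-directed set $\{u\in\ker x^*:0\leq u\leq x\}$... but $x^*$ kills all of these, so one cannot build the approximating functionals from inside the kernel that way; instead I would look at the band $x^{*d}$ directly and use that $X^*=x^{*dd}\oplus x^{*d}$, writing the problem as: show the band projection onto $x^{*dd}$ does not ``see'' $x^*$ as an isolated point, equivalently $x^{*dd}$ meets $\overline{x^{*d}}^{w^*}$ in more than the trivial way. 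Concretely, the cleanest route is: $x^*\notin\overline{x^{*d}}^{w^*}$ is equivalent (by the bipolar / Hahn--Banach separation in the $w^*$ topology) to the existence of $x\in X$ with $x^*(x)\neq 0$ but $y^*(x)=0$ for all $y^*\in x^{*d}$, i.e.\ $x\in x^{*d}{}^{\perp}$. So (iv) holds if and only if the pre-annihilator $x^{*d}{}_{\perp}=\{x\in X: y^*(x)=0\ \forall y^*\in x^{*d}\}$ is not contained in $\ker x^*$. One shows this pre-annihilator is an ideal of $X$ (it is $\bigcap_{y^*\in x^{*d}}\ker y^*$, an intersection of ideals), and that any $0\leq x\in x^{*d}{}_{\perp}$ satisfying $x^*(x)>0$ must be an atom with $x^*$ its coordinate functional: indeed if $0\leq z\leq x$ then the functional $u^*\mapsto u^*(z)$ decomposes along $X^*=x^{*dd}\oplus x^{*d}$, and since $x\perp x^{*d}$ pointwise one checks $z$ does too, forcing (via the characterization of atoms through one-dimensional bands, as in the proof of Proposition \ref{prop:coordinate VL}(iv)$\Rightarrow$(i)) that $z$ is a scalar multiple of $x$; then Proposition \ref{prop:coordinate}(iv) finishes.

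The main obstacle I anticipate is exactly this last point: passing from the \emph{functional-analytic} statement ``$x^*$ is not in the $w^*$-closure of $x^{*d}$'' to the \emph{order-theoretic} statement ``there is an atom $x_0$ with $x^*(x_0)>0$.'' The separation argument hands us a single vector $x\in X$ with $x^*(x)\neq0$ lying in the pre-annihilator of the band $x^{*d}$, and the delicate step is to check that such an $x$ (or $|x|$, or an appropriate component of it) is forced to be an atom. This is where the interplay between $X$ and $X^*$ is subtlest, because $x^{*dd}$ being the band generated by $x^*$ in $X^*$ does not automatically correspond to a projection band in $X$, and one must argue that $x\in (x^{*d})_\perp$ implies the principal ideal $I_x$ it generates is one-dimensional — equivalently that $x^{*d}$ separates points of $\ker x^*$ but $x^{*d}{}_\perp\cap X_+\setminus\{0\}$ consists of atoms. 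I would handle this by invoking Proposition \ref{prop:coordinate VL} in the form: $\ker x^*$ order dense $\iff$ $(\ker x^*)^d=\{0\}$, and then relating $(\ker x^*)^d$ inside $X$ to $(x^{*d})_\perp$; if $(x^{*d})_\perp\not\subseteq\ker x^*$ one gets a nonzero element of $(\ker x^*)^{d}$, contradiction, and this contradiction is the crux that closes the loop (iv)$\Rightarrow$(i).
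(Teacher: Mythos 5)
Your implications (i)$\Rightarrow$(ii), (ii)$\Rightarrow$(iii) and (iv)$\Rightarrow$(i) are essentially the paper's proof: (i)$\Rightarrow$(ii) via the last assertion of Proposition \ref{prop:coordinate}, so that $x^{*d}$ is the kernel of the $w^*$-continuous evaluation at $x_0$; and (iv)$\Rightarrow$(i) via the bipolar identity $\overline{x^{*d}}^{w^*}=(x^{*d}_{\perp})^{\perp}$, the fact that $x^{*d}_{\perp}$ is an ideal of $X$, and the observation that it must then be one-dimensional, hence spanned by an atom, after which Proposition \ref{prop:coordinate VL} closes the loop. The ``delicate step'' you worry about is actually forced by pure codimension counting rather than by any correspondence between bands of $X^*$ and bands of $X$: since $x^*$ is an atom of $X^*$, one has $X^*=\mathrm{span}(x^*)\oplus x^{*d}$, so $x^{*d}$ has codimension one and its pre-annihilator has dimension at most one; being a nonzero ideal, it is the span of an atom $x_0$ with $x^*(x_0)>0$. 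Your contrapositive variant (producing a nonzero element of $(\ker x^*)^d$ from an element of $x^{*d}_{\perp}\setminus\ker x^*$) also works for the same reason, once one notes that any $u\in x^{*d}_{\perp}\cap\ker x^*$ is annihilated by all of $X^*$ and hence vanishes.

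The genuine gap is in your (iii)$\Rightarrow$(iv). You claim that if $x^*\in\overline{x^{*d}}^{w^*}$ then, ``after scaling to lie in a fixed multiple of the ball,'' $x^*$ is a $w^*$-limit of points of the $w^*$-compact set $x^{*d}\cap rB_{X^*}$. This is not a legitimate step: a point of the $w^*$-closure of an unbounded convex set need not be the $w^*$-limit of any \emph{bounded} net from that set, and rescaling the members of an unbounded approximating net changes its limit. Whether $w^*$-closedness of all bounded slices of a convex set forces $w^*$-closedness of the whole set is precisely the content of the Krein--Smulian theorem, which is what the paper invokes here: from (iii) and homogeneity, every slice $x^{*d}\cap rB_{X^*}=r\bigl(x^{*d}\cap B_{X^*}\bigr)$ is $w^*$-compact, hence $w^*$-closed, so Krein--Smulian yields (iii)$\Rightarrow$(ii), and then (ii)$\Rightarrow$(iv) is immediate because $x^*\wedge x^*=x^*\neq 0$ shows $x^*\notin x^{*d}$. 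With that substitution your cycle of implications is complete and coincides with the paper's argument.
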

\begin{proof}
 (i)$\Rightarrow$(ii) follows from Proposition \ref{prop:coordinate}. \medskip
 
 (ii)$\Rightarrow$(iii) and (ii)$\Rightarrow$(iv) are straightforward. \medskip
 
 (iii)$\Rightarrow$(ii) follows from Krein-Smulian theorem.\medskip

(iv)$\Rightarrow$(i): Since $x^{*d}$ is a subspace of $X^*$ it follows that $\overline{x^{*d}}^{w^*}=(x^{*d}_{\bot})^{\bot}$, where $x^{*d}_{\bot}\subset X$ is the set of vectors that vanish on every element of $x^{*d}$. Hence, there is $x_0\in x^{*d}_{~\bot}$ such that $x^*(x_0)>0$. Since $x^{*d}$ is an ideal of $X^*$ of codimension $1$, it follows from the dual Riesz-Kantorovich formula that $x^{*d}_{~\bot}$ is an ideal in $X$ of dimension $1$. Hence, $x_0$ is an atom, and so according to Proposition \ref{prop:coordinate VL} we conclude that $x^*$ is a coordinate functional of an atom.
\end{proof}

Using the last item in Proposition  \ref{prop:coordinate VL} and Proposition \ref{prop:coordinate} we immediately get the following fact, previously established in \cite[Theorem 4.1]{DMCRARZ}.

\begin{cor}\label{cor:coordinate}
Every lattice homomorphism on an order continuous Banach lattice is a coordinate functional of an atom, and in particular, attains its norm.
\end{cor}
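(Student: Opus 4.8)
The plan is to read off the result from the characterizations already obtained, the only new ingredient being the order continuity of the norm. Let $x^*$ be a lattice homomorphism on an order continuous Banach lattice $X$; one may assume $x^*\neq 0$, since the zero functional trivially attains its norm. First I would recall that every lattice homomorphism into $\R$ is automatically positive: applying the defining identity to $x\vee 0$ gives $x^*(x^+)=x^*(x)\vee x^*(0)=(x^*(x))^+\geq 0$ for all $x\in X$, and every positive element of $X$ is of the form $x^+$.

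The key step is then to show that $x^*$ is order continuous as a functional. This is immediate: if $(x_\alpha)$ is a net in $X$ with $x_\alpha\downarrow 0$, then order continuity of the norm of $X$ yields $\|x_\alpha\|\to 0$, and since $x^*$ is bounded, $|x^*(x_\alpha)|\leq\|x^*\|\,\|x_\alpha\|\to 0$; as $(x^*(x_\alpha))$ is a decreasing net of nonnegative reals, this means $x^*(x_\alpha)\downarrow 0$. Hence $x^*$ satisfies condition (v) of Proposition \ref{prop:coordinate VL}.

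Now I would simply invoke the preceding results. By Proposition \ref{prop:coordinate VL} (the equivalence of (v) with (ii)), the nonzero order continuous lattice homomorphism $x^*$ is, up to a positive scalar, the coordinate functional of some atom $x_0\in X$. Finally, Proposition \ref{prop:coordinate} (implication (i)$\Rightarrow$(ii)) shows that such a functional attains its norm — in fact at $x_0/\|x_0\|$, and for \emph{every} lattice norm on $X$, in particular the given one.

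I do not anticipate a genuine obstacle here, since the statement is essentially a corollary of the machinery built in Propositions \ref{prop:coordinate} and \ref{prop:coordinate VL}. The only point that requires a moment's care is the order continuity step, which rests on nothing more than the definition of an order continuous Banach lattice (order-null nets are norm-null) combined with the boundedness of $x^*$; everything else is a direct citation.
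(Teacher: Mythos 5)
Your proposal is correct and follows essentially the same route as the paper, which likewise deduces the corollary immediately from condition (v) of Proposition \ref{prop:coordinate VL} together with Proposition \ref{prop:coordinate}; your explicit verification that order continuity of the norm forces $x^*$ to be order continuous is exactly the (implicit) bridging step. No issues.
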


The last statement can be generalized to the case of finite-rank operators thanks to the additional information provided by Corollary \ref{cor:coordinate}:

\begin{cor}\label{cor:finrankoc}
Every finite-rank order continuous lattice homomorphism attains its norm. In particular, every finite-rank lattice homomorphism from an order continuous Banach lattice attains its norm.
\end{cor}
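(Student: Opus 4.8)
The plan is to reduce the statement about finite-rank lattice homomorphisms $T\colon X\to Y$ to the scalar-valued case covered by Corollary \ref{cor:coordinate}, by decomposing $T$ according to the structure of its range. First I would recall that a lattice homomorphism has range which is a sublattice of $Y$, and since $T$ has finite rank, $T(X)$ is a finite-dimensional sublattice of $Y$; hence $T(X)$ is lattice isometric (after fixing the norm it inherits from $Y$) to some $\mathbb{R}^n$ with a lattice norm, and in particular it has a basis of atoms $e_1,\dots,e_n$ (the atoms of $T(X)$). Writing $T = \sum_{i=1}^n e_i^* \otimes e_i$ where $e_i^*$ is the coordinate functional of the atom $e_i$ in $T(X)$, each scalar functional $x_i^* := e_i^* \circ T \in X^*$ is a composition of two lattice homomorphisms, hence a lattice homomorphism on $X$; and $x_i^*$ is order continuous whenever $T$ is, since $e_i^*$ is order continuous on the finite-dimensional lattice $T(X)$ (all positive functionals on a finite-dimensional Archimedean vector lattice are order continuous).

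Next I would apply Proposition \ref{prop:coordinate VL}, or directly Corollary \ref{cor:coordinate} in the form that an order-continuous lattice homomorphism on any Banach lattice is a coordinate functional of an atom: each $x_i^*$ is therefore a positive multiple of the coordinate functional $\lambda_{x_{0,i}}$ of some atom $x_{0,i}\in X_+$, and by Proposition \ref{prop:coordinate}(i)$\Rightarrow$(ii) there is a normalized vector at which $x_i^*$ attains its norm. The delicate point is that to conclude $T$ attains its (operator) norm we need a single vector $x\in B_X$ realizing $\|T\|$, not a separate one for each coordinate. Here I would use that for a lattice homomorphism $T$ one has $\|T\| = \|\,T\,\| = \sup_{x\in B_{X_+}} \|Tx\|$ and, crucially, that $\|Tx\|$ for $x\ge 0$ depends on $x$ only through the positive tuple $(x_1^*(x),\dots,x_n^*(x))$ via the lattice norm of $T(X)$; so the problem becomes: maximize $\|(t_1,\dots,t_n)\|_{T(X)}$ subject to $(t_1,\dots,t_n)$ lying in the coordinate projection of $B_{X_+}$ onto $\operatorname{span}\{x_1^*,\dots,x_n^*\}$.

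The main obstacle is thus showing this finite-dimensional maximum is attained, i.e.\ that the image of $B_{X_+}$ under the map $x\mapsto (x_1^*(x),\dots,x_n^*(x))$ has closure whose sup-of-norm is achieved at an interior-type point coming from an actual $x\in B_X$. The clean way around this is to exploit the atomic structure: since each $x_i^*$ is (a multiple of) the coordinate functional of an atom $x_{0,i}$, and distinct such coordinate functionals are either equal up to scaling or disjoint (Proposition \ref{prop:coordinate}, last statement, identifies disjointness of $y^*$ with $x^*$ by vanishing on the atom), after merging repeated directions we may assume $x_{0,1},\dots,x_{0,m}$ are pairwise disjoint atoms, so they generate a projection band lattice-isometric to a sublattice on which $T$ restricts to a finite-dimensional lattice homomorphism with the \emph{same} norm as $T$; one then normalizes within the span of $x_{0,1},\dots,x_{0,m}$, which is finite-dimensional, and the supremum defining $\|T\|$ is attained there by compactness of the unit ball. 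The second sentence of the corollary is then immediate: an order-continuous Banach lattice $X$ makes every lattice homomorphism out of $X$ order continuous (composition with a lattice homomorphism preserves order continuity, and scalar functionals are handled by Corollary \ref{cor:coordinate}), so the hypothesis of the first sentence is automatic. I expect the only real work to be the verification that passing to the band generated by the atoms $x_{0,i}$ preserves the operator norm of $T$, which should follow from $P := P_{x_{0,1}} \vee \cdots \vee P_{x_{0,m}}$ being a band projection with $T = T\circ P$ (because $\bigcap_i \ker \lambda_{x_{0,i}} \supseteq \ker P$ and $T$ factors through the $x_i^*$).
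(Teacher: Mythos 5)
Your proposal is correct and follows essentially the same route as the paper: decompose $T$ through the coordinate functionals of its finite-dimensional range, apply Corollary \ref{cor:coordinate} to identify each scalar component $x_i^*=e_i^*\circ T$ as a coordinate functional of an atom of $X$, verify that $T=T\circ P$ for the band projection $P$ onto the span of those atoms, and conclude by compactness of the unit ball of that finite-dimensional band. The paper's proof is exactly this argument (with the same final step $TB_X=TB_Z$), so no further comparison is needed.
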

\begin{proof}
    Let $T:X\to Y$ be a finite-rank order continuous lattice homomorphism. Replacing $Y$ with $TX$ does not affect order continuity of $T$, and so we may assume that $Y=TX$, which is lattice isomorphic to $\R^{n}$, for some $n\in\N$ and with the appropriate lattice norm. Let $e_1^*,\ldots,e_n^*$ be the coordinate functionals on $\R^{n}$ (which we identify with $TX$). Let $x^{*}_{k}:=e_{k}^*\circ T$, $k=1,\ldots,n$, which is an order continuous lattice homomorphism on $X$. It follows that $Tx=\left(x^{*}_{1}(x),...,x^{*}_{n}(x)\right)$, for every $x\in X$. Due to surjectivity of $T$, we have $x^{*}_{k}\ne 0$, and so by Corollary \ref{cor:coordinate} there is an atom $x_k\in X$ such that $x^{*}_{k}$ is the coordinate functional of $x_k$, for $k=1,\ldots,n$. 
    Let $P_{k}$ be the band projection onto the span of $x_k$, so that
    $P_{k}x=x^{*}_{k}(x)x_k$, for every $x\in X$.    
     Let $P$ be the band projection onto $Z:=\mathrm{span}\left\{x_1,\ldots,x_n\right\}$. We have that $P_kP=P_k$, hence $x^{*}_{k}(x)x_k=P_{k}x=P_kPx$, and so $x^{*}_{k}(x)=x^{*}_{k}(Px)$, for every $x\in X$ and $k=1,\ldots, n$. It follows that $T=TP$, $TB_{X}=TB_{Z}$ and since $Z$ is finitely dimensional, $T$ attains its norm on $B_Z$.
\end{proof}

Let us consider now the case when $X$ is close to being order continuous, in the sense that $X^a$ is of finite codimension in $X$. By the \textit{order continuous part} $X^a$ of a Banach lattice $X$ we mean the largest (closed) ideal of $X$ on which the norm is order continuous \cite[Proposition 2.4.10]{Meyer}. Note that even when $X^a$ has codimension $1$ there can be a lattice homomorphism that does not attain its norm. This is the case for example for $X=c$ (the space of convergent sequences of reals) with a renorming as in item (2) after Theorem \ref{prop:strictly-positive-functional} below.

\begin{prop}\label{prop:fincodim}
    Let $X$ be a Banach lattice. If $X^a$ has codimension $n$ in $X$, then $X$ has exactly $n$ distinct norm-one lattice homomorphisms which are not coordinate functionals of atoms.
\end{prop}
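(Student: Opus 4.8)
The plan is to reduce the statement to the case of lattice homomorphisms, which all attain their norm on $X^a$ by Corollary \ref{cor:coordinate}, and then analyze how the finite codimension of $X^a$ controls the possible ``extra'' lattice homomorphisms. First I would recall that $X^a$ is a closed ideal, hence a sublattice, and that the restriction map $x^*\mapsto x^*\flag_{X^a}$ sends $\Hom(X,\R)$ into $\Hom(X^a,\R)\cup\{0\}$. The key dichotomy for a norm-one lattice homomorphism $x^*$ on $X$ is whether $x^*\flag_{X^a}=0$ or not. If $x^*\flag_{X^a}\ne 0$, then since $X^a$ is order continuous, Corollary \ref{cor:coordinate} gives that $x^*\flag_{X^a}$ is a positive multiple of the coordinate functional of some atom $x_0\in X^a$; such an atom is also an atom of $X$ (an ideal of an ideal of $X$ is an ideal of $X$, so $x_0^{d}$ computed in $X^a$ sits inside $x_0^{d}$ in $X$, and one checks $x^*(x_0)>0$), and then Proposition \ref{prop:coordinate} (implication (iv)$\Rightarrow$(i)) forces $x^*$ itself to be a multiple of $\lambda_{x_0}$, i.e.\ a coordinate functional. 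So the lattice homomorphisms that are \emph{not} coordinate functionals of atoms are precisely the norm-one $x^*$ that annihilate $X^a$.

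It therefore remains to count norm-one lattice homomorphisms vanishing on $X^a$, i.e.\ norm-one lattice homomorphisms on the quotient Banach lattice $X/X^a$. Since $X^a$ has codimension $n$, the quotient is an $n$-dimensional Banach lattice, hence lattice isomorphic to $\R^n$ with some lattice norm; its lattice homomorphisms are exactly the positive multiples of the $n$ coordinate functionals $e_1^*,\dots,e_n^*$, so after normalization there are exactly $n$ of them, $e_k^*/\|e_k^*\|$. Pulling back along the quotient map $q:X\to X/X^a$ yields exactly $n$ distinct norm-one lattice homomorphisms on $X$ vanishing on $X^a$ — one must check these pullbacks are genuinely norm-one (the quotient norm is a lattice norm and $\|x^*\circ q\|=\|x^*\|$ when the norm on $X/X^a$ is the quotient norm; if one fixes instead the given lattice structure on $X/X^a$ only up to lattice isomorphism, the count of distinct norm-one lattice homomorphisms is still $n$ since each ray of lattice homomorphisms meets the unit sphere once). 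Conversely every norm-one lattice homomorphism on $X$ killing $X^a$ factors through $q$ as a norm-one lattice homomorphism on $X/X^a$, so we get exactly $n$.

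The main obstacle I expect is the first reduction step: showing that a lattice homomorphism $x^*$ on $X$ whose restriction to $X^a$ is nonzero is automatically a coordinate functional of an atom \emph{of $X$}. The subtle point is that the atom $x_0$ is produced inside $X^a$, and one has to verify that $x_0$ is still an atom in the larger lattice $X$ and that $x^*$, not just its restriction, is determined by $x_0$; the clean way is to use that $x^*(x_0)>0$ and invoke Proposition \ref{prop:coordinate} directly in $X$, which only requires knowing $x_0\in X_+$ is an atom and $x^*(x_0)>0$ — both of which follow once we know $x_0\in X^a\subseteq X$ is an atom of $X^a$, because any ideal generated below $x_0$ in $X$ intersects $X^a$ (it is contained in the principal ideal of $x_0$, which lies in $X^a$). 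A secondary, purely bookkeeping obstacle is making the normalization conventions precise so that ``exactly $n$ distinct norm-one lattice homomorphisms'' is unambiguous; this is handled by working with the actual lattice norm on $X$ restricted/quotiented, under which each $1$-dimensional cone of lattice homomorphisms has a unique norm-one representative.
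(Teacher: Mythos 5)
Your proposal is correct and follows essentially the same route as the paper: split according to whether $x^*$ restricts to zero on $X^a$, use Corollary \ref{cor:coordinate} on the order-continuous ideal $X^a$ to produce an atom that remains an atom of $X$ and upgrade via Proposition \ref{prop:coordinate}, and count the homomorphisms factoring through the $n$-dimensional quotient $X/X^a\cong\R^n$. The only detail you leave implicit is the converse half of your ``precisely'': a homomorphism vanishing on $X^a$ is not a coordinate functional because every atom of $X$ lies in $X^a$ (its span is a closed ideal on which the norm is trivially order continuous), so such a functional kills every atom and Proposition \ref{prop:coordinate} rules it out.
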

\begin{proof}
Since $X^a$ is a closed ideal of codimension $n$ in $X$, the quotient $X/ X^a$ is an $n$-dimensional Banach lattice, hence lattice isomorphic to $\R^n$. Let $Q:X\to X/X^a\equiv\R^{n}$ be the quotient map, and let $x_1^*,\ldots,x_n^*$ be as in the proof of Corollary \ref{cor:finrankoc}. Let
$x^*$ be a lattice homomorphism on $X$. We will show that $x^*$ is not a coordinate functional of an atom if and only if it is a multiple of some $x_j^*$. \smallskip

First, assume that $x^*$ vanishes on $X^a$. Then $x^*$ must be a linear combination of $x_1^*,\ldots,x_n^*$. Since lattice homomorphisms are either disjoint or proportional, we conclude that $x^*$ is a positive multiple of some $x_j^*$. Also, every atom in $X$ is contained in $X^a$, hence by Proposition \ref{prop:coordinate}, we deduce that $x^*$ cannot be a coordinate functional on $X$. \smallskip

Now we assume that $\left.x^*\right|_{X^a}\neq 0$, so that $\left.x^*\right|_{X^a}$ is a non-trivial lattice homomorphism on the order continuous Banach lattice $X^a$. By Corollary \ref{cor:coordinate}, there exists an atom $x_0\in X^a$ such that $x^*(x_0)>0$. As $X^a$ is an ideal in $X$, it follows that $x_0$ is an atom in $X$ as well. Now, by Proposition \ref{prop:coordinate}, we conclude that $x^*$ must be a coordinate functional on $X$.
\end{proof}

\section{Stability of norm attainment of lattice homomorphism under renormings}\label{section:Stability of norm attainment of lattice homomorphism under renormings}

In view of the Proposition \ref{prop:coordinate}, one might wonder what happens to a lattice homomorphism which is not a coordinate functional. Can we always find a lattice norm so that such a lattice homomorphism does not attain its norm? The answer will be affirmative in many occasions:

\begin{thm}\label{prop:strictly-positive-functional}
Let $X$ be a Banach lattice which has a strictly positive functional $\mu$. If we renorm $X$ with $\|\cdot\|_\mu:=\|\cdot\|+\mu\bigl(|\cdot|\bigr)$, then the only lattice homomorphisms attaining their norms are coordinate functionals of atoms. 
\end{thm}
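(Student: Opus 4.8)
The plan is to reduce the statement to the characterization of coordinate functionals of atoms in terms of the kernel (Proposition~\ref{prop:coordinate VL}, equivalence (ii)$\Leftrightarrow$(iv)) and then to show directly that a lattice homomorphism whose kernel is order dense cannot attain its norm for $\|\cdot\|_\mu$. The key mechanism is that strict positivity of $\mu$ upgrades the harmless inequality $\|x-v\|\le\|x\|$ (valid for $0\le x-v\le x$ in a lattice norm) to the strict inequality $\|x-v\|_\mu<\|x\|_\mu$, which destroys any candidate maximizer.

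First I would record the routine facts. Since $\mu$ is positive, $|x|\le|y|$ forces $\mu(|x|)\le\mu(|y|)$, and together with $\|x\|\le\|y\|$ this gives $\|x\|_\mu\le\|y\|_\mu$; moreover $\|x\|\le\|x\|_\mu\le(1+\|\mu\|)\,\|x\|$, so $\|\cdot\|_\mu$ is an equivalent lattice norm and $(X,\|\cdot\|_\mu)$ is a Banach lattice with the same order. Let $x^*$ be a non-zero lattice homomorphism that attains its $\|\cdot\|_\mu$-norm. Being a lattice homomorphism it is positive, so if $x^*$ attains its norm at some point of $B_{(X,\|\cdot\|_\mu)}$ it also attains it at the modulus of that point (using $\||x|\|_\mu=\|x\|_\mu$ and $x^*(|x|)\ge x^*(x)$); after rescaling we may therefore assume there is $x\ge 0$ with $\|x\|_\mu=\|x\|+\mu(x)=1$ and $x^*(x)=\|x^*\|_\mu=:c>0$. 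In particular $x>0$.

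The heart of the argument is the following. Suppose, for contradiction, that $\ker x^*$ is order dense; note $\ker x^*$ is an ideal of codimension one because $x^*$ is a lattice homomorphism. Order density means $(\ker x^*)^d=\{0\}$, so $x\notin(\ker x^*)^d$, i.e.\ there is $j\in\ker x^*$ with $v:=x\wedge|j|>0$; since $\ker x^*$ is an ideal and $0\le v\le|j|$, we have $v\in\ker x^*$, and $0<v\le x$. As $\mu$ is strictly positive, $\mu(v)>0$. Now $x-v\ge 0$ satisfies $x^*(x-v)=x^*(x)=c$, while, using $0\le x-v\le x$ and monotonicity of the original norm,
\[
\|x-v\|_\mu=\|x-v\|+\mu(x)-\mu(v)\le\|x\|+\mu(x)-\mu(v)=1-\mu(v)<1 .
\]
Hence $(x-v)/\|x-v\|_\mu$ lies in the unit ball of $(X,\|\cdot\|_\mu)$ and is mapped by $x^*$ to $c/\|x-v\|_\mu>c=\|x^*\|_\mu$, which is absurd.

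Therefore $\ker x^*$ is not order dense, and Proposition~\ref{prop:coordinate VL} gives that $x^*$ is a coordinate functional of an atom; conversely such functionals attain their norm for every lattice norm by Proposition~\ref{prop:coordinate}, so the set of norm-attaining lattice homomorphisms of $(X,\|\cdot\|_\mu)$ is exactly the set of coordinate functionals of atoms. I do not anticipate a genuine obstacle here: the only mildly delicate points are the passage from ``kernel order dense'' to the existence of a nonzero positive element of the kernel below the maximizer $x$ (handled by the meet $x\wedge|j|$) and the bookkeeping that lets us assume the maximizer is positive and normalized. (Alternatively, one can bypass Proposition~\ref{prop:coordinate VL}: the displayed computation actually shows $\ker x^*\cap[0,x]=\{0\}$, i.e.\ $x\in(\ker x^*)^d$; since $(\ker x^*)^d\cap\ker x^*=\{0\}$ and $\ker x^*$ has codimension one, $(\ker x^*)^d=\mathbb{R}x$ is one-dimensional, so $x$ is an atom and $x^*(x)>0$, whence $x^*$ is its coordinate functional by Proposition~\ref{prop:coordinate}.)
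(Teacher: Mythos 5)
Your argument is correct, and it is organized differently from the paper's. The paper first proves Lemma \ref{lem:renorming & NA}, which computes $\|x^*\|_\mu=\frac{1}{1+\lambda_{x^*}(\mu)}\|x^*\|$ via the band decomposition $\mu=(\mu-rx^*)+rx^*$ and shows that any maximizer $x_0$ must satisfy $\mu(x_0)=rx^*(x_0)$; it then notes (via Proposition \ref{prop:coordinate}) that $x_0$ cannot be an atom, extracts a nonzero $u\in[0,x_0]$ with $x^*(u)=0$ from a pair of disjoint elements below $x_0$, and contradicts $\mu(x_0)=rx^*(x_0)$ using strict positivity. You bypass the lemma entirely: you produce the nonzero positive kernel element $v\le x$ from order density of $\ker x^*$ (so that Proposition \ref{prop:coordinate VL} finishes the job), and you get the contradiction directly from the strict norm drop $\|x-v\|_\mu\le\|x\|_\mu-\mu(v)<\|x\|_\mu$ while $x^*(x-v)=x^*(x)$. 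Both proofs rest on the same mechanism --- strict positivity of $\mu$ forbids a maximizer from dominating a nonzero positive element of the kernel --- but your version is more self-contained and arguably cleaner for this one theorem; the paper's detour through Lemma \ref{lem:renorming & NA} pays off elsewhere, since that lemma is reused in Corollary \ref{cor:converse-coordinate functionals} and in the proof of Theorem \ref{thm:several}. Your parenthetical alternative ending (showing $x\in(\ker x^*)^d$, hence $(\ker x^*)^d=\mathbb{R}x$ is one-dimensional and $x$ is an atom) is also sound and makes the proof independent even of Proposition \ref{prop:coordinate VL}. The only points worth stating explicitly are the ones you already flag: $x\ne 0$ (since $x^*(x)=\|x^*\|_\mu>0$ for $x^*\ne 0$) so that order density yields $j$ with $x\wedge|j|>0$, and $x\ne v$ so that the normalization $(x-v)/\|x-v\|_\mu$ makes sense.
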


\noindent Recall that a linear functional $\mu:X\to\mathbb{R}$ is said to be \textit{strictly positive} if $\mu(x)>0$ whenever $x>0$.  Before proving this result, let us see some consequences:
\begin{enumerate}[(1)]
    \item If we equip $C[0,1]$ or $L_\infty[0,1]$ with the norm $|||f|||=\|f\|_\infty +\int_0^1 |f(t)|dt$, then no (non-trivial) lattice homomorphism is norm-attaining. \smallskip
    \item If we consider $\ell_\infty=C(\beta\mathbb{N})$ with the renorming $|||(x_n)_n|||=\|(x_n)_n\|_\infty+\sum_{n=1}^\infty \frac{1}{2^n}|x_n|$, then no evaluation $\delta_t$ for $t\in \beta\mathbb{N}\backslash\mathbb{N}$ attains its norm. \smallskip
    \item Since every separable Banach lattice has a strictly positive functional (see \cite[Proposition 1.b.15]{LinTza2}), then any separable Banach lattice can always be renormed in such a way that the only norm-attaining lattice homomorphism are the coordinate functionals. \medskip
\end{enumerate}

It was shown in \cite[Theorem 4.1]{DMCRARZ} (see also Corollary \ref{cor:coordinate}) that every lattice homomorphism on an order continuous Banach lattice attains its norm.
The authors asked whether the same result can be extended to $\sigma$-Dedekind complete Banach lattices. It should be noted that the above examples answer this question \textit{in the negative}: recall that $\ell_\infty$ and $L_\infty[0,1]$ are both Dedekind complete Banach lattices and, as this property depends only on the order of $X$ (and not on the norm), renormings (1) and (2) provide examples of Dedekind complete Banach lattices with lattice homomorphisms that do not attain their norm. \smallskip

Back to the proof of Theorem \ref{prop:strictly-positive-functional}, let us fix the notation. Let $x^*$ be a lattice homomorphism on a Banach lattice $X$. Then, $x^*$ is an atom in $X^*$, and so we denote the corresponding coordinate functional by $\lambda_{x^*}\in X^{**}$. That is, if $P_{x^*}:X^*\rightarrow X^*$ represents the band projection onto the span of $x^*$, given $\mu\in X^*$ we have $P_{x^*}(\mu)=\lambda_{x^*}(\mu)x^*$. 

\begin{lem}\label{lem:renorming & NA}
Let $x^*$ be a lattice homomorphism on a Banach lattice $X$ and let $\mu \in X^*_+$. Then:
\begin{enumerate}
    \item The norm $\|x\|_\mu:=\|x\|+\mu\bigl(|x|\bigr)$, $x\in X$, is an equivalent lattice norm on $X$, such that $\|x^*\|_\mu=\frac{1}{1+\lambda_{x^*}(\mu)}\|x^*\|$.
    \item If $x^*$ attains its $\|\cdot\|_\mu$-norm at $x_0$, then $\mu(x_0)=\lambda_{x^*}(\mu)x^*(x_0)$.
    \item If also $\mu\bot x^*$, then $\|x^*\|_\mu=\|x^*\|$, and if $x^*$ attains its $\|\cdot\|_\mu$-norm at $x_0$, then $\mu(x_0)=0$.
\end{enumerate}
\end{lem}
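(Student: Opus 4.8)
The plan is to concentrate all the work in part~(1), the computation of the dual norm $\|x^*\|_\mu$, and then to derive~(2) from its equality cases and~(3) as a one-line formal consequence. For~(1), that $\|\cdot\|_\mu$ is an equivalent lattice norm is routine: $\|x\|\le\|x\|_\mu\le(1+\|\mu\|)\|x\|$, and monotonicity holds because $\mu\ge0$ (so $|x|\le|y|$ gives $\mu(|x|)\le\mu(|y|)$). For the dual norm I would first reduce to the positive cone: since $x^*$ is a lattice homomorphism, $|x^*(x)|=x^*(|x|)$ and $\||x|\|_\mu=\|x\|_\mu$, whence
\[
\|x^*\|_\mu=\sup\bigl\{x^*(z):z\ge0,\ \|z\|+\mu(z)\le1\bigr\}.
\]
Then I would decompose $\mu$ along the band generated by the atom $x^*$ in $X^*$ (with $P_{x^*}$ and $\lambda_{x^*}$ as introduced just before the lemma): write $\mu=\lambda_{x^*}(\mu)\,x^*+\nu$ with $\nu:=(\Id-P_{x^*})\mu\ge0$ and $\nu\bot x^*$, and set $\alpha:=\lambda_{x^*}(\mu)\ge0$. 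For admissible $z\ge0$ one then has $\mu(z)=\alpha x^*(z)+\nu(z)\ge\alpha x^*(z)$ and $\|z\|\ge x^*(z)/\|x^*\|$, so $1\ge\bigl(\|x^*\|^{-1}+\alpha\bigr)x^*(z)$; this yields the upper estimate for $\|x^*\|_\mu$. The reverse inequality is where I expect the only genuine difficulty: I must exhibit positive vectors $z$ on which $\|z\|$ and $x^*(z)$ are simultaneously near-extremal while $\nu(z)\approx0$, so that $\mu$ is felt only through its $x^*$-component. The tool is the Riesz-Kantorovich formula applied to $\nu\wedge x^*=0$: given $\e>0$, pick $z_0\ge0$ with $\|z_0\|\le1$ and $x^*(z_0)>\|x^*\|-\e$, then $0\le u\le z_0$ with $\nu(u)+x^*(z_0-u)<\e$; one checks that $\|u\|\le1$, $x^*(u)>\|x^*\|-2\e$ and $\|u\|_\mu<1+\alpha\|x^*\|+\e$, and normalising $u$ in $\|\cdot\|_\mu$ and letting $\e\to0$ delivers the matching lower bound, hence the value of $\|x^*\|_\mu$ asserted in~(1).

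For~(2), suppose $x^*$ attains its $\|\cdot\|_\mu$-norm at $x_0$; after normalising and, if necessary, replacing $x_0$ by $-x_0$, I may assume $\|x_0\|_\mu=1$ and $x^*(x_0)=\|x^*\|_\mu$. Then $|x_0|$ is also norm-attaining, so running the upper-bound chain of~(1) with $z=|x_0|$ and substituting the exact value of $\|x^*\|_\mu$ forces every inequality in that chain to be an equality; in particular $\nu(|x_0|)=0$, hence $\nu(x_0)=0$ (since $|\nu(x_0)|\le\nu(|x_0|)$), and therefore $\mu(x_0)=\alpha x^*(x_0)+\nu(x_0)=\lambda_{x^*}(\mu)\,x^*(x_0)$. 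Part~(3) is then immediate: if $\mu\bot x^*$ then $P_{x^*}\mu=0$, so $\lambda_{x^*}(\mu)=0$; feeding this into~(1) gives $\|x^*\|_\mu=\|x^*\|$, and into~(2) gives $\mu(x_0)=0$ at any $\|\cdot\|_\mu$-attaining $x_0$.

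Thus the whole statement reduces to one dual-norm computation. The only step that is not bookkeeping is the lower bound in~(1): making the Riesz-Kantorovich approximation quantitative enough and verifying the limit of the normalised vectors $u/\|u\|_\mu$ as $\e\to0$. One technical point worth flagging in the write-up is that the norm-attaining vector need not lie in the positive cone, which is why the equality analysis in~(2) is carried out on its modulus and then transferred back.
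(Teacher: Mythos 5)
Your proposal is correct and takes essentially the same route as the paper's proof: the same band decomposition $\mu=\lambda_{x^*}(\mu)\,x^*+\nu$ with $\nu\perp x^*$, the same Riesz--Kantorovich argument on $\nu\wedge x^*=0$ to manufacture near-optimal positive vectors for the lower bound, and parts (2) and (3) extracted, as in the paper, from the equality case of the upper estimate and from the specialization $\lambda_{x^*}(\mu)=0$. The only (harmless) discrepancy is that your unnormalized computation yields $\|x^*\|_\mu=\|x^*\|/\bigl(1+\lambda_{x^*}(\mu)\|x^*\|\bigr)$, which matches the formula as stated only under the normalization $\|x^*\|=1$ that the paper's proof also adopts.
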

\begin{proof}
Without loss of generality, we may assume that $\|x^*\|=1$. Let $r:=\lambda_{x^*}(\mu)$. We first show that $\|x^*\|_\mu\geq\frac{1}{1+r}$.\medskip

Fix $\varepsilon>0$ and let $x\in B_{X_+}$ such that $x^*(x)\geq 1-\varepsilon$. As $x^*$ and $\mu-rx^*$ are disjoint, by the Riesz-Kantorovich formula there exists $y\in [0,x]$ such that $x^*(x-y)\leq \varepsilon$ and $(\mu-rx^*)(y)\leq \varepsilon$. From the first inequality, we deduce
$$
\varepsilon\geq x^*(x-y)=x^*(x)-x^*(y)\geq 1-\varepsilon -x^*(y)$$
which means that $x^*(y)\geq 1-2\varepsilon$. Now, from the second one (keeping in mind that $\|y\|\leq 1$ because $x\in B_{X_+}$), we get
$$
\varepsilon\geq \mu(y)-rx^*(y)\geq \mu(y)-r
$$
or equivalently, $\mu(y)\leq \varepsilon+r.$
Hence, we have
$$
\|x^*\|_\mu \geq \frac{x^*(y)}{\|y\|_\mu}=\frac{x^*(y)}{\|y\|+\mu(y)}\geq \frac{1-2\varepsilon}{1+\varepsilon+r},
$$
and since $\varepsilon>0$ is arbitrary we conclude that $\|x^*\|_\mu\geq\frac{1}{1+r}$.\medskip

Now suppose that $|x^*(x)|=1$ for some $x\in X$. Note that $x^*(|x|)=|x^*(x)|=1$, and $\mu\geq rx^*$, hence
$$
\|x\|_\mu=\|x\|+\bigl(\mu-rx^*\bigr)(|x|)+rx^*(|x|)\geq |x^*(x)|+rx^*(|x|)=1+r.
$$

This shows that $\|x^*\|_\mu\leq\frac{1}{1+r}$. Moreover, if $x_0\in X_{+}$ is such that $|x^*(x_0)|=1$ and $\|x_0\|_\mu=1+r$, then $\bigl(\mu-rx^*\bigr)(x_0)=0$, hence $\mu(x_0)=r=\lambda_{x^*}(\mu)$. This justifies the second claim in (ii). (iii) is a special case of (i) and (ii), since $\mu\perp x^*$ is equivalent to $r=\lambda_{x^*}(\mu)=0$.
\end{proof}

\begin{proof}[Proof of Theorem \ref{prop:strictly-positive-functional}]
Suppose $\mu$ is a strictly positive functional and consider the renorming
$$
\|x\|_\mu:=\|x\|+\mu\bigl(|x|\bigr),\qquad x\in X.
$$
Let $x^*$ be a lattice homomorphism on $X$ which is not a coordinate functional of an atom and assume that $x_0\in X_{+}$ is such that $\|x_0\|_\mu =1$ and $x^*(x_0)=\|x^*\|_{\mu}$. Then, according to Lemma \ref{lem:renorming & NA} we have that $\mu(x_0)=rx^*(x_0)$, where $r\ge 0$ is such that $\mu-rx^*\geq 0$ and $\mu-rx^*\bot x^*$. It follows from Proposition \ref{prop:coordinate} that $x_0$ is not an atom. Hence, there are disjoint non-zero $u,v\in [0,x_0]$. We have that $x^*(u)\wedge x^*(v)=x^*(u\wedge v)=0$, and so we may assume that $x^*(u)=0$. From strict positivity we have 
\[\mu(x_0)>\mu(x_0-u)\ge rx^*(x_0-u)=rx^*(x_0)-rx^*(u)=rx^*(x_0),\]
which is a contradiction.
\end{proof}

\begin{cor}\label{cor:converse-coordinate functionals}
  Let $X$ be a Banach lattice and let $x^*\in \text{Hom}(X,\mathbb{R})$, $x^*\neq 0$. Suppose that there exists $x_0\in X$ such that for every lattice renorming $|||\cdot|||$ of $X$, $x^*$ attains its norm at $\frac{x_0}{|||x_0|||}$. Then $x_0$ is an atom in $X$.
\end{cor}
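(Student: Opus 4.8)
The plan is to argue by contradiction, turning the machinery of Lemma~\ref{lem:renorming & NA} against $x_0$: if $x_0$ is not an atom, I will produce a lattice renorming at which $x^*$ fails to attain its norm at $x_0/|||x_0|||$. First I would record the easy consequences of the hypothesis applied to the original norm $\|\cdot\|$. Normalizing $\|x^*\|=1$, norm attainment at $x_0/\|x_0\|$ gives $x^*(x_0)=\|x_0\|>0$, so $x_0\neq 0$ and, since $x^*$ is a lattice homomorphism, $x^*(|x_0|)=|x^*(x_0)|=x^*(x_0)>0$. Set $u:=|x_0|$. A short argument then shows that $u$ is itself not an atom: if it were, then $0\le x_0^{\pm}\le u$ would force $x_0^{\pm}$ to be scalar multiples of $u$, and from $x_0^+\wedge x_0^-=0$ and $x_0^++x_0^-=u$ one gets $x_0=u$ or $x_0=-u$; the former contradicts that $x_0$ is not an atom, and the latter contradicts $x^*(x_0)=-x^*(u)<0$ (lattice homomorphisms being positive).

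\textbf{The key step.} The heart of the argument is to produce a positive functional $\varphi\in X^*_+$ which is disjoint from $x^*$ and satisfies $\varphi(u)>0$. Since $u$ is not an atom, there are disjoint nonzero $v,w\in[0,u]$ (exactly as in the proof of Theorem~\ref{prop:strictly-positive-functional}); because $x^*$ is a lattice homomorphism, $x^*(v)\wedge x^*(w)=x^*(v\wedge w)=0$, so after relabeling we may assume $x^*(v)=0$, with $v>0$. Now recall that $x^*$ is an atom of $X^*$, so the ideal it generates is a projection band and $X^*=\mathrm{span}\{x^*\}\oplus x^{*d}$. Picking any $\psi\in X^*_+$ with $\psi(v)>0$ (possible since $v>0$) and decomposing $\psi=\alpha x^*+\varphi$ along this splitting, both summands are positive (they are band projections of $\psi\ge 0$), so $\alpha\ge 0$ and $\varphi\in(x^{*d})_+$ is disjoint from $x^*$. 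Crucially, $\varphi(v)=\psi(v)-\alpha\,x^*(v)=\psi(v)>0$, whence $\varphi(u)\ge\varphi(v)>0$ because $0\le v\le u$.

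\textbf{Conclusion.} Consider the equivalent lattice norm $\|x\|_\varphi:=\|x\|+\varphi(|x|)$. Since $\varphi\perp x^*$, Lemma~\ref{lem:renorming & NA}(3) gives $\|x^*\|_\varphi=\|x^*\|=1$, while $\|x_0\|_\varphi=\|x_0\|+\varphi(|x_0|)=\|x_0\|+\varphi(u)>\|x_0\|$. Hence
$$
x^*\!\left(\frac{x_0}{\|x_0\|_\varphi}\right)=\frac{x^*(x_0)}{\|x_0\|+\varphi(u)}=\frac{\|x_0\|}{\|x_0\|+\varphi(u)}<1=\|x^*\|_\varphi,
$$
so $x^*$ does not attain its $\|\cdot\|_\varphi$-norm at $x_0/\|x_0\|_\varphi$, contradicting the hypothesis. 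The main obstacle is the middle step: extracting from "$x_0$ is not an atom" a positive functional disjoint from $x^*$ that does not annihilate $|x_0|$. This is where the two structural features of the situation are essential — the lattice-homomorphism property of $x^*$ (to split the disjoint pair $v,w$ so that $x^*$ kills one of them) and the fact that $x^*$ is an atom of $X^*$ (to remove its component and stay inside $x^{*d}$ while keeping positivity). The remainder is routine bookkeeping with Lemma~\ref{lem:renorming & NA}.
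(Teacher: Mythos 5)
Your proof is correct and follows essentially the same route as the paper: both arguments extract a nonzero element of $[0,|x_0|]$ annihilated by $x^*$ from the assumption that $x_0$ is not an atom, and then contradict norm attainment at $x_0/\|x_0\|_\mu$ using the renorming of Lemma~\ref{lem:renorming & NA}. The only cosmetic difference is that you first project $\psi$ onto $x^{*d}$ so as to invoke part (iii) of that lemma, whereas the paper applies part (ii) directly to an arbitrary $\mu\in X_+^*$ with $\mu(y_0)>0$; your version also treats the positivity of $x_0$ slightly more explicitly by working with $u=|x_0|$.
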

\begin{proof}
Suppose that $x_0$ is not an atom in $X$. Arguing as in the proof of Theorem \ref{prop:strictly-positive-functional} we can find a non-zero $y_0\in[0,x_0]$ such that $x^*(y_0)=0$. Let $\mu\in X_+^*$ be such that $\mu(y_0)>0$ and define $\|\cdot\|_{\mu}$ as in Lemma \ref{lem:renorming & NA}. Let $r\geq 0$ be such that $\mu-rx^*\geq 0$ and $\mu-rx^*\bot x^*$. By part (ii) of Lemma \ref{lem:renorming & NA} we have $\mu(x_0)=rx^*(x_0)$, hence 
\[\mu\left(y_0\right)= \left[\mu-rx^*\right](y_0)+rx^*(y_0)\leq  \left[\mu-rx^*\right](x_0)=0,\]
which is a contradiction.
\end{proof}

In general, there can be lattice homomorphisms that are not coordinate functionals but attain their norm for any lattice renorming, as shown in the next example.

\begin{example}\label{ex:omega1}
  Let $\omega_1$ denote the first uncountable ordinal, and let $[0,\omega_1]$ denote the corresponding interval of ordinals which is compact for the order topology. It is clear that the evaluation functional $\delta_{\omega_1}\in C[0,\omega_1]^*$ is a lattice homomorphism which is not a coordinate functional of an atom. Let us show that if $|||\cdot|||$ is a lattice norm on $C[0,\omega_1]$, then $\delta_{\omega_1}$ attains its norm. Assume that $|||\delta_{\omega_1}|||=1$. Then, for every $n\in\mathbb{N}$ there is $f_n\in C[0,\omega_1]^+$ such that $f_n(\omega_1)=1$ and $1\leq |||f_n|||\leq 1+\frac{1}{n}$. Since continuous functions on $[0,\omega_1]$ are eventually constant, for every $n$ there is $\alpha_n<\omega_1$ such that $f_n$ is identically $1$ on $[\alpha_n,\omega_1]$. Now, define $\alpha:=\sup_{n\in\mathbb{N}}\alpha_n$, which is again a countable ordinal. Since $(\alpha,\omega_1]$ is a clopen set, $f=\chi_{(\alpha,\omega_1]}\in C[0,\omega_1]$. Note that $0\leq f\leq f_n$ for every $n\in\mathbb{N}$, therefore $|||f|||\leq 1$ and $f(\omega_1)=1$.
\end{example}

We have already pointed out just after Theorem \ref{prop:strictly-positive-functional} that being Dedekind complete does not ensure that every lattice homomorphism attains its norm. But what happens if we assume that the Banach lattice is $\sigma$-order continuous? In the following example we show that this condition does not guarantee that $\text{Hom}(X,\mathbb{R})\subset\text{NA}(X,\mathbb{R})$ either.

\begin{example}
 Let $\Gamma$ be an uncountable set endowed with the discrete topology. If $K=\alpha\Gamma=\Gamma\cup\{\infty\}$ is the one-point compactification of $\Gamma$, then $C(K)$ is $\sigma$-order continuous but not order continuous. Indeed, suppose that $\{f_n\}_{n=1}^\infty$ is a decreasing sequence in $C(K)$ such that $\bigwedge_{n=1}^\infty f_n=0$. We claim that $f_n(\infty)\to 0$. If not, there is $\varepsilon_0>0$ such that $f_n(\infty)\geq \varepsilon_0$. For every natural $n$, $S_n=\text{supp}\bigl(f_n-f_n(\infty)\bigr)$ is countable, so $S=\cup_{n=1}^\infty S_n$ is also countable. Now, fix any $\gamma_0\in\Gamma\backslash S$ (this set is non-empty as $\Gamma$ is uncountable) and note that $f_n(\gamma_0)=f_n(\infty)\geq \varepsilon_0$ for all $n\in\mathbb{N}$. Thus, we arrive at the following contradiction
$$
0=\bigwedge_{n=1}^\infty f_n\geq \varepsilon_0\,\mathbf{1}_{\{\gamma_0\}}>0,
$$
and this implies that $f_n(\infty)\to 0$. With the latter in mind, it is easy to conclude that $\|f_n\|_\infty\to 0$. Let $\varepsilon>0$ and $N$ be such that $f_N(\infty)<\varepsilon$. Then, $\{\gamma\in\Gamma\::\: |f_N(\gamma)|\geq \varepsilon\}=\{\gamma_1,\ldots,\gamma_k\}$, and since $f_n(\gamma_i)\to 0$ for every $i=1,\ldots,k$, we find $N'\geq N$ such that $f_{N'}(\gamma_i)<\varepsilon$ for all $i$. Then $\|f_n\|_\infty\leq \varepsilon$ whenever $n\geq N'$.

Now, take an infinite sequence of points $(t_n)_{n=1}^\infty$ in $\Gamma\subset K$ and define
$$
|||f|||=\|f\|_\infty+\sum_{n=1}^\infty\frac{1}{2^n}|f(t_n)|,\qquad f\in C(K),
$$
which is a lattice renorming of $C(K)$. Therefore, $(C(K),|||\cdot|||)$ is a $\sigma$-order continuous Banach lattice which has a lattice homomorphism, specifically $\delta_\infty$, which does not attain its norm.
\end{example}

 \section{Norm attainment of lattice homomorphisms on AM-spaces}\label{sec:AMspaces}
 
Although the set of positive functionals over a Banach lattice is always large since any element of the dual is a difference of positive functionals, the set of lattice homomorphisms might be trivial for some Banach lattices. For instance, $L_p[0,1]$ does not have non-zero lattice homomorphisms whenever $1\leq p < \infty$ (see \cite[Lemma 2.31 (1)]{AA-book}).
A natural class of Banach lattices with a \textit{large set} of lattice homomorphisms is the class of AM-spaces (to get a more accurate idea, see Proposition \ref{prop:norming}).
Recall that an \textit{AM-space} is a Banach lattice $X$ such that the norm satisfies
		$$
		\|x \vee y\| = \max\{\|x\|, \|y\|\} \quad \text{for all } x, y \in X^+.
		$$
        
AM-spaces and $C(K)$-spaces are closely related. Specifically, a well-known theorem due to Kakutani asserts that a Banach lattice $X$ is an AM-space if and only if it embeds as a closed sublattice of some $C(K)$-space \cite{Kakutani-AM spaces}. It is also well known that on $C(K)$ spaces, lattice homomorphisms correspond to (positive multiples) of point evaluations. Therefore, on $C(K)$, every lattice homomorphism attains its norm; in fact, all of them attain their norm at the strong unit $\mathbf{1}_K$. However, an AM-space in general does not necessarily have a strong unit. \smallskip

Similarly, every positive functional on a $C(K)$-space attains its norm at the constant function $\mathbf{1}_K$. It turns out that in the separable setting, this property characterizes $C(K)$-spaces among AM-spaces \cite[Proposition 19.26]{OT}: given a separable AM-space $X$, every positive functional on $X$ attains its norm if and only if $X$ is lattice isometric to a $C(K)$-space. The following simple example shows that this cannot be generalized to the non-separable case.

\begin{example}
Let $\Gamma$ be an uncountable set and consider the closed sublattice of $\ell_\infty(\Gamma)$
$$
X=\bigl\{f\in\ell_\infty(\Gamma)\::\: \text{supp}(f)\mbox{ is countable}\bigr\}.
$$
Note that $X$ is a non-separable AM-space, but it does not admit a strong unit, so it cannot be lattice isometric to a $C(K)$-space. Let $x^*\in X^*$ be a norm-one positive functional. Then, for every $n\in\mathbb{N}$, there is $f_n\in B_{X_+}$ such that $x^*(f_n)\geq 1-\frac{1}{n}$. By definition of $X$, for every natural $n$, the set $S_n:=\text{supp}(f_n)$ is countable, so the union $S=\cup_{n=1}^\infty S_n$ is also countable. Therefore, $\mathbf{1}_S\in X$ and since $\mathbf{1}_S\geq f_n$ for every $n\in\mathbb{N}$, this implies that $x^*(\mathbf{1}_S)=1$.
\end{example}

Lattice homomorphisms, as opposed to positive functionals, have a similar behavior on $C(K)$-spaces and general AM-spaces in terms of norm attainment. We will provide two proofs of this fact: the one in Section \ref{section:Free Banach lattices over Banach spaces} uses some advanced tools coming from free Banach lattices, whereas the proof presented in this section is elementary, only requiring the following characterization of norm-attaining lattice homomorphisms. This characterization is interesting on its own and, in fact, it will also be decisive to prove that every lattice homomorphism on a free Banach lattice over a lattice attains its norm (Proposition \ref{prop:normattainingFBLlattice}).

\begin{prop}\label{prop:characterizationLatNA}
Let $X$ be a Banach lattice and $x^*\in X^*$ a lattice homomorphism with $\|x^*\|=1$. Then $x^*\in\text{NA}(X,\mathbb{R})$ if and only if  there exists an increasing sequence $(x_n)_{n=1}^\infty$ in $B_{X_+}$ such that $x^*(x_n)\to 1$ as $n\to\infty$.
\end{prop}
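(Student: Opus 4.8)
The forward implication is immediate: if $x^*$ attains its norm at some $x_0\in B_X$, then $|x_0|\in B_{X_+}$ and, since $x^*$ is a lattice homomorphism, $x^*(|x_0|)=|x^*(x_0)|=1$, so the constant (in particular increasing) sequence $x_n:=|x_0|$ does the job. The content is therefore the converse, and the plan is to produce the supremum of $(x_n)$ in an enlargement of $X$, verify that it witnesses norm attainment there, and then descend back to $X$.

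For the first step I would pass to the bidual. Embedding $(x_n)$ into $X^{**}$ through the canonical (order‑ and norm‑preserving) embedding, and using that $X^{**}=(X^*)^*$ is Dedekind complete (being the dual of a Banach lattice), the supremum $\xi:=\sup_n x_n$ exists in $X^{**}$, and $\xi\ge 0$. Two facts then need to be checked. First, $\|\xi\|_{X^{**}}\le 1$: every $\mu\in X^*$ is order continuous when regarded as a functional on $X^{**}$ (the canonical image of a Banach lattice in its bidual consists of order continuous functionals on the dual), so $\langle\xi,\mu\rangle=\sup_n\langle x_n,\mu\rangle\le 1$ for $\mu\in B_{X^*}$ with $\mu\ge 0$, whence $|\langle\xi,\mu\rangle|\le\langle\xi,|\mu|\rangle\le 1$ for every $\mu\in B_{X^*}$. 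Second, $\langle\xi,x^*\rangle=\sup_n x^*(x_n)=1$, again by the order continuity of $x^*$ on $X^{**}$. Thus $x^*$, seen as a functional on $X^{**}$, attains its norm at $\xi\in B_{X^{**}}$. (It is also worth recording that $x^*$ remains a lattice homomorphism on $X^{**}$, since the second adjoint of a lattice homomorphism is a lattice homomorphism, the dual notion being that of an interval–preserving map.)

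The remaining --- and, I expect, genuinely delicate --- step is to transfer this attainment from $X^{**}$ down to $X$; for a general functional this is impossible, so the lattice homomorphism hypothesis must enter here. My plan would be to localise inside principal ideals. For each $n$, the principal ideal $I_{x_n}\subseteq X$ equipped with its order‑unit norm is an $AM$‑space, isometrically a $C(K_n)$ with $x_n$ in the role of $\mathbf 1$, and the restriction $x^*|_{I_{x_n}}$ is a lattice homomorphism on $C(K_n)$, hence a positive multiple of an evaluation $\delta_{t_n}$. Moreover $x^*$ retains its full norm $1$ on $I_{x_n}$ for the original norm of $X$: if $y\in B_{X_+}$ and $t$ is large enough then $y\wedge tx_n\in I_{x_n}$, $\|y\wedge tx_n\|\le\|y\|\le 1$, and $x^*(y\wedge tx_n)=x^*(y)\wedge tx^*(x_n)=x^*(y)$; applying this to $y=x_m$ shows $\sup\{x^*(z):z\in B_{X_+}\cap I_{x_n}\}=1$, and in fact the vectors $w^{(n)}_m:=x_m\wedge\bigl(\frac{x^*(x_m)}{x^*(x_n)}x_n\bigr)$ form an increasing sequence inside $B_{X_+}\cap I_{x_n}$ with $x^*(w^{(n)}_m)=x^*(x_m)\to 1$. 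The problem is thereby reduced to showing that a point evaluation on an $AM$‑space attains its norm for a \emph{weaker} lattice norm --- equivalently, that $x^*$ attains its norm on the closed ideal of $X$ generated by some $x_n$ (which carries $x_n$ as a weak unit) --- and I expect this reconciliation between the order‑unit norm and the norm of $X$ (equivalently, the passage from attainment at $\xi$ in the $C(K)$‑ideal $I_\xi\subseteq X^{**}$, where $x^*|_{I_\xi}=\delta_{t_0}$ and $x_n\uparrow\mathbf 1$, to attainment at a bona fide vector of $B_X$) to be the main obstacle. The guiding principle is that the lattice truncations $z\mapsto z\wedge tx_n$ are precisely what keep one inside $X$ without ever lowering the value of $x^*$.
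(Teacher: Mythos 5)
Your forward direction is fine, but the converse --- which is the entire content of the proposition --- is not actually proved. The bidual step buys you nothing: by Goldstine's theorem and weak$^*$ compactness of $B_{X^{**}}$, \emph{every} norm-one functional on $X$ attains its norm at some point of $B_{X^{**}}$, lattice homomorphism or not, so producing $\xi\in B_{X^{**}}$ with $\langle \xi,x^*\rangle=1$ uses none of the hypotheses in an essential way. The whole difficulty is the descent to $B_X$, which you explicitly leave open. Moreover, the statement you propose to reduce to --- that a point evaluation on an AM-space attains its norm for a weaker lattice norm --- is false as a general principle: Proposition \ref{PropNotNA} exhibits equivalent lattice norms on $C(K)$ for which some evaluation $\delta_t$ does \emph{not} attain its norm. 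So any correct argument must exploit the specific data of the increasing sequence rather than just the AM-structure of the principal ideals, and your sketch does not yet do so.

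The missing idea is elementary and never leaves $X$. Discarding finitely many terms, assume $x^*(x_n)>0$ for all $n$, and set $y_n:=\bigwedge_{k=1}^{n}x_k/x^*(x_k)$. Since $x^*$ preserves infima, $x^*(y_n)=1$ for every $n$, while $\|y_n\|\le \|x_n\|/x^*(x_n)\le 1/x^*(x_n)$. The sequence $(y_n)$ is decreasing, and monotonicity of $(x_k)$ together with $x^*(x_k)\le 1$ gives, for $m>n$,
\[
0\le y_n-y_m\le\Bigl(\tfrac{x_{n}}{x^*(x_n)}-\bigwedge_{k=n+1}^{m}x_k\Bigr)^{+}\le\Bigl(\tfrac{x_{n+1}}{x^*(x_n)}-x_{n+1}\Bigr)^{+}=\Bigl(\tfrac{1}{x^*(x_n)}-1\Bigr)x_{n+1},
\]
so $\|y_n-y_m\|\le 1/x^*(x_n)-1$ and $(y_n)$ is norm-Cauchy. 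Its limit $y$ satisfies $\|y\|\le\lim_n 1/x^*(x_n)=1$ and $x^*(y)=1$, so $x^*$ attains its norm at $y\in B_{X_+}$. This single computation is where both hypotheses (the increasingness of $(x_n)$ and the $\wedge$-multiplicativity of $x^*$) do their work, and it makes the bidual detour unnecessary.
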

 \begin{proof}
Necessity is trivial, by considering the constant sequence defined by $x_n:=x\in B_{X_+}$ where $x^*(x)=1$. Let us prove sufficiency. Let $(x_n)_{n=1}^\infty \subset B_{X^+}$ be an increasing sequence such that $0<x^*(x_n)\to 1$. It follows that $x^*(x_n)$ is increasing. Define
 $$
 y_n:=\bigwedge_{k=1}^n \frac{x_k}{x^*(x_k)}, \quad \text{ for } n\in\mathbb{N}.
 $$
The above sequence $(y_n)_{n=1}^\infty$ is clearly decreasing and we claim that it is a Cauchy sequence in $X$. Given $m,n\in\mathbb{N}$, $m>n$, we have
\begin{eqnarray*}
0 &\leq& y_n-y_m=y_n - y_n\land \left(\bigwedge_{k=n+1}^m \frac{x_k}{x^*(x_k)} \right)=\left( \bigwedge_{k=1}^n \frac{x_k}{x^*(x_k)} - \bigwedge_{k=n+1}^m \frac{x_k}{x^*(x_k)}\right)^+ \\
&\leq &  \left( \frac{x_n}{x^*(x_n)}- \bigwedge_{k=n+1}^m x_{k}\right)^+ \leq \left( \frac{x_{n+1}}{x^*(x_n)}- x_{n+1}\right)^+ = \left(\frac{1}{x^*(x_n)}-1\right) x_{n+1}.
\end{eqnarray*}
By the monotonicity of the norm of $X$ we obtain $\|y_n-y_m\|\leq \frac{1}{x^*(x_n)}-1$, and this shows that $(y_n)_{n=1}^\infty$ is Cauchy. Let us denote by $y$ the limit of this sequence in $X$ and note that for every $n\in\N$,
$$
x^*(y_n)=\bigwedge_{k=1}^n \frac{x^*(x_k)}{x^*(x_k)}=1, \quad \text{ and } \quad \|y_n\|\leq \frac{\|x_n\|}{x^*(x_n)}\leq \frac{1}{x^*(x_n)},$$
so $x^*(y)=1=\|y\|$.
 \end{proof}

As a consequence, we get:

\begin{thm}
\label{THMAM}
    Every lattice homomorphism on an AM-space attains its norm.
\end{thm}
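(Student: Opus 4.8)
The plan is to reduce everything to the characterization of norm-attaining lattice homomorphisms in Proposition \ref{prop:characterizationLatNA}. The only real content of that proposition is that one can replace an arbitrary norming sequence by an \emph{increasing} one lying in $B_{X_+}$; the defining property of an AM-space is precisely what allows us to do this by taking finite suprema without leaving the unit ball.

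First I would dispose of the trivial case $x^*=0$, and then normalize so that $\|x^*\|=1$. By definition of the norm, there is a sequence $(z_n)_{n=1}^\infty$ in $B_{X_+}$ with $x^*(z_n)\to 1$. Set $x_n:=z_1\vee\dots\vee z_n$. Then $(x_n)_{n=1}^\infty$ is increasing by construction. Since $X$ is an AM-space, an immediate induction on the identity $\|u\vee v\|=\max\{\|u\|,\|v\|\}$ for $u,v\in X_+$ gives $\|x_n\|=\max_{1\le k\le n}\|z_k\|\le 1$, so $x_n\in B_{X_+}$. Moreover, because $x^*$ is a lattice homomorphism it preserves finite suprema, whence $x^*(x_n)=\max_{1\le k\le n}x^*(z_k)$; as $x^*(z_k)\le\|x^*\|\,\|z_k\|\le 1$ for all $k$ and $x^*(z_n)\to 1$, we conclude $x^*(x_n)\to 1$.

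Thus $(x_n)_{n=1}^\infty$ is an increasing sequence in $B_{X_+}$ with $x^*(x_n)\to 1$, and Proposition \ref{prop:characterizationLatNA} yields that $x^*$ attains its norm. There is essentially no obstacle here: the heavy lifting is already contained in Proposition \ref{prop:characterizationLatNA} (the construction of the decreasing auxiliary sequence $y_n=\bigwedge_{k\le n} x_k/x^*(x_k)$ and the Cauchy estimate), and the AM-property enters only through the elementary observation that finite suprema of unit-ball elements stay in the unit ball. If one wanted a self-contained argument, the sole point to be careful about would be to verify once more that $X$ being an AM-space forces $\|\,\bigvee_{k\le n} z_k\,\|\le 1$, which is the content of the inductive step above.
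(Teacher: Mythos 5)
Your proof is correct and is essentially identical to the paper's own argument: both take finite suprema of an approximating sequence in $B_{X_+}$, use the AM-property to keep these suprema in the unit ball, and then invoke Proposition \ref{prop:characterizationLatNA}. No issues.
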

\begin{proof}
Let $x^*$ be a norm-one lattice homomorphism on an AM-space $X$. Then, for every $n\in\mathbb{N}$, there is $y_n\in B_{X^+}$ such that $x^*(y_n)\geq 1-\frac{1}{n}$. Now, define for every natural $n\geq 1$, $x_n:=\bigvee_{k=1}^n y_n$. Clearly, $(x_n)_{n=1}^\infty$ is an increasing sequence in $X$. Moreover, since $X$ is an AM-space, $x_n\in B_{X^+}$, and by the positivity of $x^*$, we also have $x^*(x_n)\geq 1-\frac{1}{n}$ for $n \in\mathbb N$. Consequently, by Proposition \ref{prop:characterizationLatNA}, $x^*$ attains its norm.
\end{proof}

As in the order continuous case, the previous theorem can be generalized for lattice homomorphisms of finite rank.

\begin{cor}
 Let $X$ be an AM-space and $Y$ an arbitrary Banach lattice. Then, every finite-rank lattice homomorphism $T:X\to Y$ attains its norm.   
\end{cor}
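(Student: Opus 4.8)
The plan is to follow the reduction already used in the proof of Corollary~\ref{cor:finrankoc}: since $T$ has finite rank, $T(X)$ is a finite-dimensional sublattice of $Y$, hence Archimedean, hence lattice isomorphic to $\R^{n}$ for some $n$ equipped with a suitable lattice norm $\nu$ (namely the one corresponding to $\|\cdot\|_Y$ restricted to $T(X)$). Replacing $Y$ by $T(X)$, we may assume that $T$ is the surjection $x\mapsto(x_1^*(x),\ldots,x_n^*(x))$, where $x_k^*:=e_k^*\circ T$ and $e_k^*$ denotes the $k$-th coordinate functional on $\R^{n}$. As coordinate functionals on $\R^{n}$ are lattice homomorphisms and compositions of lattice homomorphisms are lattice homomorphisms, each $x_k^*$ is a (nonzero, by surjectivity) lattice homomorphism on the AM-space $X$, and therefore attains its norm by Theorem~\ref{THMAM}. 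Unlike in Corollary~\ref{cor:finrankoc}, an AM-space need not have atoms, so band projections are unavailable here; the substitute will be the AM-identity for suprema of positive vectors.

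The key step is to manufacture a single vector of $B_{X_+}$ that maximizes every $x_k^*$ simultaneously. For each $k$, Theorem~\ref{THMAM} provides $\tilde z_k\in B_X$ with $x_k^*(\tilde z_k)=\|x_k^*\|$; since $x_k^*$ is positive, $z_k:=|\tilde z_k|\in B_{X_+}$ still satisfies $x_k^*(z_k)=\|x_k^*\|$. Put $w:=z_1\vee\cdots\vee z_n$. Because $X$ is an AM-space and each $z_k\in B_{X_+}$, we get $\|w\|=\max_{k}\|z_k\|\le 1$, so $w\in B_{X_+}$; and from $z_k\le w$ together with positivity of $x_k^*$ we read off $\|x_k^*\|=x_k^*(z_k)\le x_k^*(w)\le\|x_k^*\|\,\|w\|\le\|x_k^*\|$, forcing $x_k^*(w)=\|x_k^*\|$ for every $k=1,\ldots,n$.

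It then remains to verify that $\|Tw\|_Y=\|T\|$. For any $x\in B_X$, the lattice homomorphism property gives $|Tx|=T|x|$, hence $\|Tx\|_Y=\|T|x|\|_Y=\nu(T|x|)$; and the $k$-th coordinate of $T|x|$ equals $x_k^*(|x|)\in[0,\|x_k^*\|]$, so $0\le T|x|\le Tw$ coordinatewise in $\R^{n}$. Since $\nu$ is a lattice norm, $\nu$ is monotone on the positive cone, whence $\nu(T|x|)\le\nu(Tw)=\|Tw\|_Y$. Taking the supremum over $x\in B_X$ yields $\|T\|\le\|Tw\|_Y$, and since $w\in B_X$ this gives $\|Tw\|_Y=\|T\|$, i.e.\ $T$ attains its norm at $w$.

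The only genuinely nontrivial ingredient is the construction of the common maximizer $w$, and this is exactly where the AM-space structure of the \emph{domain} is used in an essential way, through the identity $\|z_1\vee\cdots\vee z_n\|=\max_k\|z_k\|$ (and it is the reason the statement fails for general Banach lattices). Everything else is routine bookkeeping with finite-dimensional vector lattices together with the lattice-homomorphism identities $e_k^*\circ T\in\Hom(X,\R)$ and $|Tx|=T|x|$.
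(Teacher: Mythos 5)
Your proposal is correct and follows essentially the same route as the paper's proof: reduce to $Y=T(X)\cong\R^n$, apply Theorem \ref{THMAM} to each coordinate functional $x_k^*=e_k^*\circ T$, and use the AM-identity $\|z_1\vee\cdots\vee z_n\|=\max_k\|z_k\|$ to produce a common maximizer in $B_{X_+}$. The only cosmetic difference is that you pass through $|Tx|=T|x|$ and monotonicity of the norm on $T(X)$, where the paper bounds $Tx\le\sum_k\|x_k^*\|y_k$ directly for $x\in B_{X_+}$; both arguments are sound.
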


\begin{proof}
We may assume that $Y=T(X)$. Hence, $Y$ is a finite-dimensional Banach lattice, so its order is determined by a $1$-unconditional basis $\{y_k,y_k^*\}_{k=1}^n$. Thus, for every $x\in X$, we can write
    $$
    Tx=\sum_{k=1}^n y_k^*(Tx)y_k=\sum_{k=1}^n x_k^*(x)y_k,
    $$
    where $x_k^*:=y_k^*\circ T$ for $1\leq k\leq n$. Note that for every $x\in B_{X_+}$,
    $$
    Tx=\sum_{k=1}^n x_k^*(x)y_k\leq \sum_{k=1}^n \|x_k^*\| y_k=:y_0,
    $$
    so $\|T\|\leq \|y_0\|$. On the other hand, as each $x_k^*$ is a lattice homomorphism on the AM-space $X$, by Theorem \ref{THMAM}, we can find $x_k\in B_{X_+}$ such that $x_k^*(x_k)=\|x_k^*\|$ for all $k$. Now, we define $x_0:=\lor_{k=1}^n x_k$, which belongs to $B_{X_+}$ as $X$ is an AM-space. Therefore, $Tx_0=y_0$ and, in particular, this shows that $T$ attains its norm at $x_0$.    
\end{proof}

The following is an example of a Banach lattice $X$ where every lattice homomorphism in $X^*$ attains its norm, but finite-rank lattice homomorphisms need not attain their norm.

\begin{example}\label{finite-rankLH}
Let us consider the following norm in $c\oplus c$:
$$
\|(x_n,y_n)\|=\max\left\{\|(x_n)\|_\infty+\sum_{n=1}^\infty \frac{1}{2^n}|y_n|, \:\: \|(y_n)\|_\infty\right\}.
$$ \smallskip
It is clear that $\|(x_n,y_n)\|_\infty \leq \|(x_n,y_n)\|\leq 2\|(x_n,y_n)\|_\infty$. Note that every lattice homomorphism on this Banach lattice attains its norm: for example, $e_n^*\oplus 0$ attains its norm at $(e_n,\mathbf{0})$ and $e_\infty^*\oplus 0$ at $(\mathbf{1},\mathbf{0})$ (where $\mathbf{0}$ and $\mathbf{1}$ represent the sequences which are constantly $0$ and $1$, respectively).
The operator $T:(c\oplus c,\|\cdot\|)\to\ell_1^2$ defined by \smallskip
$$
T(x_n,y_n):=\bigl(\lim_{n}x_n,\lim_n y_n\bigr)=(e_\infty^*\oplus 0)(x_n,y_n)\,e_1+(0\oplus e_\infty^*)(x_n,y_n)\,e_2
$$
is clearly a lattice homomorphism of norm at most $2$. To see that $\|T\|=2$ denote $z^n=\mathbf{1}-\sum_{k=1}^n e_k \in c$, and consider the sequence
$(\mathbf{1},z^n)\in c\oplus c$. We have that $\|T(\mathbf{1},z^n)\|_1=2$ for every $n$, whereas $\|(\mathbf{1},z^n)\|\to 1$ as $n\to\infty$. On the other hand, suppose that there exists a positive element $(x,y)\in c\oplus c$ with $\|(x,y)\|=1$ such that $\|T(x,y)\|_1=2$. This implies that $\lim_n x_n=\lim_n y_n=1$. In particular, $(y_n)$ cannot be the constant sequence $\mathbf{0}$, so $\|(x_n)\|_\infty+\sum_{n=1}^\infty \frac{1}{2^n}|y_n|>1$ and  $\|(x,y)\|>1$.
\end{example}

Another easy consequence of Theorem \ref{THMAM} is the following version of Urysohn's lemma for AM-spaces (see Proposition \ref{prop:norming} for a more topological formulation of the statement).

\begin{cor}\label{cor:Urysohn like}
    Let $X$ be an AM-space and let $x^*_1,\ldots,x^*_n$ be distinct lattice homomorphisms on $X$ of norm $1$. For every $a_1,\ldots,a_n\in [0,1]$, there exists $x\in B_X$ such that $x^*_k(x)=a_k$ for every $k=1,\ldots,n$.
\end{cor}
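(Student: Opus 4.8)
The plan is to produce, for each $k$, a single vector $w_k\in B_{X_+}$ concentrated at $x_k^*$, in the sense that $x_k^*(w_k)=1$ while $x_j^*(w_k)=0$ for every $j\neq k$, and then to take $x:=\bigvee_{k=1}^n a_kw_k$, which will do the job thanks to the AM-space property.

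Two preliminary observations. First, each $x_k^*$ is positive (being a lattice homomorphism), and since the $x_k^*$ are distinct and of norm one, no two of them share the same kernel (two norm-one positive functionals with equal kernels are equal). As each $\ker x_k^*$ is a hyperplane, it follows that for $k\neq j$ we may pick $v_{kj}\in\ker x_k^*$ with $x_j^*(v_{kj})\neq0$, and setting $p_{kj}:=|v_{kj}|\geq0$ the lattice-homomorphism property yields $x_k^*(p_{kj})=|x_k^*(v_{kj})|=0$ and $x_j^*(p_{kj})=|x_j^*(v_{kj})|>0$. Second, by Theorem~\ref{THMAM} every $x_k^*$ attains its norm, so we may fix $u_k\in B_{X_+}$ with $x_k^*(u_k)=1$.

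Now I would define
$$
w_k:=\Bigl(u_k-\sum_{j\neq k}t_{kj}\,p_{kj}\Bigr)^{+},\qquad t_{kj}:=\frac{x_j^*(u_k)}{x_j^*(p_{kj})}\ \geq 0 .
$$
Since $0\leq w_k\leq u_k$, we have $w_k\in B_{X_+}$. As $x_i^*$ is a lattice homomorphism it commutes with $(\cdot)^{+}$, hence $x_i^*(w_k)=\bigl(x_i^*(u_k)-\sum_{j\neq k}t_{kj}x_i^*(p_{kj})\bigr)^{+}$. For $i=k$ every term $x_k^*(p_{kj})$ vanishes, so $x_k^*(w_k)=1$; for $i=j\neq k$, the $j$-th summand equals $t_{kj}x_j^*(p_{kj})=x_j^*(u_k)$ while the remaining summands are nonnegative, so the argument of $(\cdot)^{+}$ is $\leq 0$ and $x_j^*(w_k)=0$. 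Thus $x_i^*(w_k)$ is $1$ if $i=k$ and $0$ otherwise.

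Finally, given $a_1,\dots,a_n\in[0,1]$, I would put $x:=\bigvee_{k=1}^n a_kw_k$. Each $a_kw_k$ lies in $B_{X_+}$, so iterating the AM-space identity gives $\|x\|=\max_k\|a_kw_k\|\leq 1$; and since $x_i^*$ is a lattice homomorphism, $x_i^*(x)=\bigvee_{k=1}^n a_k\,x_i^*(w_k)=a_i$, as desired. The only delicate point is that the coordinates $a_i$ must come out \emph{exactly}, not just approximately: this is precisely where the exact norm attainment from Theorem~\ref{THMAM} is used, in combination with the fact that the correction vectors $p_{kj}$ lie in $\ker x_k^*$ and so leave the $k$-th coordinate undisturbed. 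No compactness or limiting argument is required.
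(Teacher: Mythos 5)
Your argument is correct. Every step checks out: the functionals $x_k^*$ are positive, distinct and of norm one, hence have pairwise distinct kernels (two nonzero positive functionals with the same hyperplane kernel are positive multiples of one another, hence equal when normalized), so the vectors $p_{kj}=|v_{kj}|$ exist with $x_k^*(p_{kj})=0$ and $x_j^*(p_{kj})>0$; the identity $x_i^*(y^+)=\bigl(x_i^*(y)\bigr)^+$ for lattice homomorphisms then gives exactly $x_i^*(w_k)=\delta_{ik}$, the bound $0\leq w_k\leq u_k$ keeps $w_k$ in $B_{X_+}$, and the AM-identity applied to the positive vectors $a_kw_k$ yields $\|x\|\leq 1$ while $x_i^*(x)=\bigvee_k a_k x_i^*(w_k)=a_i$.

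Your route is, however, genuinely different from the paper's. Both proofs reduce the statement to producing, for each $k$, a positive norm-one vector on which $x_k^*$ takes the value $1$ and the other functionals vanish, and both invoke Theorem~\ref{THMAM} to obtain the starting vector $u_k$ (resp.\ $y$). But from that point the paper passes through Kakutani's representation theorem: it realizes $y$ as $T\mathbf{1}_K$ for a lattice homomorphism $T:C(K)\to X$, identifies the pullbacks $T^*x_k^*$ as multiples of point evaluations $r_k\delta_{t_k}$, and then applies the classical topological Urysohn lemma on $K$ to separate the points $t_k$. You instead carry out the separation entirely inside $X$ by lattice algebra: subtracting suitable positive multiples of the correction vectors $p_{kj}\in\ker x_k^*$ and taking the positive part. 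This avoids the representation machinery and the disjointness discussion for the pulled-back functionals, at the cost of a slightly more computational verification; it is arguably the more elementary and self-contained of the two arguments, and it makes transparent exactly where the exact norm attainment from Theorem~\ref{THMAM} is used.
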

\begin{proof}
    It suffices to show that there is an element $x\in B_X$ such that $x^*_1(x)=1$ and $x^*_k(x)=0$ for $k=2,\ldots, n$. By Theorem \ref{THMAM}, there exists $y\in B_{X^+}$ such that $x^*_1(y)=1$. 
    By Kakutani's representation theorem, there is a lattice homomorphism $T:C(K)\to X$, for some compact Hausdorff $K$, such that $T\mathbf{1}_K=y$. Therefore, $T^*x^*_1,\ldots,T^*x^*_n$ are lattice homomorphisms on $C(K)$. Since $x^*_1,\ldots,x^*_n$ are linearly independent, they are disjoint; since lattice operations on $X^*$ commute with taking the restriction to an ideal, it follows that $T^*x^*_1,\ldots,T^*x^*_n$ are disjoint.
    
    There exist $r_1,\ldots,r_n\ge 0$ and  $t_1,\ldots,t_n\in K$ such that $T^*x^*_k=r_k\delta_{t_k}$. Note that $1=x^*_1(y)=[T^*x^*_1](\mathbf{1}_K)=r_1 \mathbf{1}_K(t_1)=r_1$. For those $k\ne 1$ for which $r_k\ne 0$, since $r_k\delta_{t_k}\bot\delta_{t_1}$ we have $t_k\ne t_1$. By Urysohn's lemma there is a continuous function $f:K\to [0,1]$ such that $f(t_1)=1$ and $f(t_k)=0$ for $k=2,\ldots, n$ with $r_k>0$. Then, putting $x=Tf$ yields $x^*_1(x)=f(t_1)=1$ and $x^*_k(x)=r_k f(t_k)=0$, for all $k=2,\ldots, n$.
\end{proof}

\section{Lattice homomorphisms on AM-spaces and renormings}\label{section:renormings AMspaces}

Next, we continue with the study initiated in Section \ref{section:Stability of norm attainment of lattice homomorphism under renormings} of norm attainment of lattice homomorphisms after renormings, this time in the class of AM-spaces. The next proposition illustrates that it is a highly non-isomorphic property, at least for $C(K)$-spaces.

    \begin{prop}
\label{PropNotNA}
    Let $K$ be an infinite compact space. Then $C(K)$ has an equivalent lattice norm with a non-norm-attaining lattice homomorphism.
\end{prop}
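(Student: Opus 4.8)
The plan is to exhibit a concrete equivalent lattice norm on $C(K)$ for which a suitably chosen lattice homomorphism fails to attain its norm, mimicking the renormings of $\ell_\infty$ and $C[0,1]$ already used in the consequences of Theorem \ref{prop:strictly-positive-functional}. Since $K$ is infinite and compact, it contains a sequence $(t_n)_{n=1}^\infty$ of distinct points, which (passing to a subsequence if necessary) we may assume converges to some point $t_\infty\in K$ (or, if $K$ has no convergent nontrivial sequence, we instead pick any accumulation point $t_\infty$ of the set $\{t_n\}$ and argue with nets; I would simply take $t_\infty$ to be a limit point of $\{t_n:n\in\N\}$, which exists by compactness). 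The candidate renorming is
\[
|||f|||:=\|f\|_\infty+\sum_{n=1}^\infty \frac{1}{2^n}\,\bigl|f(t_n)\bigr|,\qquad f\in C(K),
\]
which is clearly a lattice norm (it is monotone with respect to $|\cdot|$) and satisfies $\|f\|_\infty\le |||f|||\le 2\|f\|_\infty$, hence is equivalent to the sup norm. The lattice homomorphism I would test is the evaluation $\delta_{t_\infty}$.

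The key step is to show $\delta_{t_\infty}$ does not attain its $|||\cdot|||$-norm. First I would compute $|||\delta_{t_\infty}|||$: since $\delta_{t_\infty}(f)=f(t_\infty)$ and one can find norm-one functions that are $1$ at $t_\infty$ but arbitrarily small at each $t_n$ for $n\le N$ (using that $t_\infty$, being a limit point of $\{t_n\}$ and—after discarding it if necessary—distinct from $t_1,\dots,t_N$, can be separated from finitely many of the $t_n$ by Urysohn), one gets $|||\delta_{t_\infty}|||=1$. Now suppose for contradiction that $f\in C(K)$ with $|||f|||=1$ and $f(t_\infty)=1$; replacing $f$ by $|f|$ preserves all relevant quantities, so assume $f\ge 0$. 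From $|||f|||=1$ and $f(t_\infty)=1$ we get $\|f\|_\infty=1$ and $\sum_{n}2^{-n}f(t_n)=0$, forcing $f(t_n)=0$ for all $n$. But $t_\infty$ is a limit point of $\{t_n\}$, so by continuity $f(t_\infty)=\lim f(t_{n_k})=0$ along a subnet/subsequence accumulating at $t_\infty$, contradicting $f(t_\infty)=1$.

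The main obstacle is the pathology that an infinite compact space need not contain a nontrivial convergent sequence (e.g.\ $\beta\N$); so the argument "$t_n\to t_\infty$" cannot be taken literally in general. I would handle this cleanly by working with an accumulation point: choose any sequence of distinct points $(t_n)$ in $K$, let $t_\infty$ be a cluster point of the set $A=\{t_n:n\in\N\}$ (so every neighbourhood of $t_\infty$ contains infinitely many $t_n$), and if $t_\infty\in A$ discard that single term so that $t_\infty\notin\{t_n\}$. Then the computation of $|||\delta_{t_\infty}|||$ uses only that $t_\infty$ is separated from each finite set $\{t_1,\dots,t_N\}$, and the non-attainment uses only that $t_\infty\in\overline{\{t_n\}}$: if $f\ge 0$, $|||f|||=1$, $f(t_\infty)=1$, then $f\equiv 0$ on $\{t_n\}$, so $f=0$ on $\overline{\{t_n\}}\ni t_\infty$, contradiction. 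This sidesteps convergent-sequence issues entirely, and the verification that $|||\cdot|||$ is an equivalent lattice norm is routine.
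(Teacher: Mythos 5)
Your proposal is correct and follows essentially the same route as the paper: the paper also picks a sequence $(t_n)_n$ with an accumulation point $t\in\overline{\{t_n:n\in\N\}}\setminus\{t_n:n\in\N\}$, uses the renorming $|||f|||=\|f\|_\infty+\sum_n 2^{-n}|f(t_n)|$, and observes that $|||\delta_t|||=1$ while $f(t)=1$ forces $|||f|||>1$. Your write-up merely supplies the details (Urysohn for the norm computation, vanishing on $\overline{\{t_n\}}$ for non-attainment) that the paper leaves implicit.
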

\begin{proof}
Take a sequence $(t_n)_n$ in $K$ with an accumulation point $t \in \overline{\{t_n:n \in \N \}}\setminus\{t_n:n \in \N \}$.
Consider the equivalent lattice norm
$$
    |||f|||=\|f\|_\infty+ \sum_{n=1}^\infty \frac{1}{2^n}|f(t_n)|.
    $$
Then the evaluation functional $\delta_t \in C(K)^*$ satisfies $|||\delta_t|||=1$.
However, by definition of $t$ and the sequence $(t_n)_n$ we have that if $f(t)=1$ then $|||f||| >1$, so $\delta_t$ is not norm-attaining.
\end{proof}

Note that Example \ref{ex:omega1} illustrates that the previous argument does not imply that for every accumulation point $t$ of the compact $K$ we can find a renorming such that the evaluation functional $\delta_t$ does not attain its norm. 

\begin{prop}\label{prop:evaluation}
Let $K$ be compact and Hausdorff, and let $X$ be a (not necessarily closed) sublattice of $C(K)$. Then:
\begin{enumerate}
\item $\overline{X}=\Bigl(\bigl\{\delta_s-\gamma\delta_t \mid s,t\in K,\           \gamma\ge 0\bigr\} \cap X^\perp\Bigr)_\perp$. In particular, $X$ is a dense sublattice of a closed sublattice $Y\subset C(K)$ if and only if whenever for $t,s\in K$ the functionals $\left.\delta_s\right|_{Y}$ and $\left.\delta_t\right|_{Y}$ are linearly independent, $\left.\delta_s\right|_{X}$ and $\left.\delta_t\right|_{X}$ are also linearly independent.
\item If $X$ is closed, then for every $x^*\in \text{Hom}(X,\mathbb{R})$ there are $t\in K$ and $r\ge 0$ such that $x^*=\left.r\delta_{t}\right|_{X}$.
\end{enumerate}
\end{prop}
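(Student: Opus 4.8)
The plan is to treat the two items separately, relying on the standard description of closed sublattices of $C(K)$ via annihilators and on the fact (Kakutani-type, see also Proposition \ref{prop:characterizationLatNA} and the surrounding discussion) that lattice homomorphisms on $C(K)$ are positive multiples of point evaluations. For item (1), I would first recall the general duality fact that for a linear subspace $Z\subseteq C(K)$ one has $\overline{Z}=(Z^\perp)_\perp$; so the content is to identify $X^\perp$ with the closed linear span (or rather the appropriate annihilator set) of the functionals of the form $\delta_s-\gamma\delta_t$. The key structural input is the classical Stone--Weierstrass-type characterization: a closed subspace $Y$ of $C(K)$ that is a sublattice and separates points "up to scalar multiples" is determined by which pairs of evaluations it identifies. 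Concretely, I would argue that $\overline{X}$ is a closed sublattice, hence (by the lattice version of Stone--Weierstrass, e.g.\ \cite[Theorem in Section 2]{AA-book} or Kakutani's theorem) equals $\{f\in C(K): f(s)=\gamma f(t)\text{ whenever }\delta_s-\gamma\delta_t\in X^\perp\}$, which is exactly the right-hand side $\bigl(\{\delta_s-\gamma\delta_t\}\cap X^\perp\bigr)_\perp$. The "in particular" clause is then a reformulation: $X$ is dense in the closed sublattice $Y$ precisely when $X^\perp=Y^\perp$, and since $Y^\perp$ is generated by the relations $\delta_s|_Y=\gamma\delta_t|_Y$, this happens if and only if the same linear dependences among restricted evaluations hold for $X$ as for $Y$; equivalently, linear independence of $\delta_s|_Y,\delta_t|_Y$ forces linear independence of $\delta_s|_X,\delta_t|_X$.

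For item (2), assume $X$ is a closed sublattice of $C(K)$ and let $x^*\in\Hom(X,\mathbb{R})$ be nonzero (the zero case is trivial with $r=0$). I would extend $x^*$ to a positive functional on $C(K)$: since $x^*$ is positive and $X$ contains $|f|$ for each $f\in X$, one can use the Hahn--Banach--Kantorovich extension theorem to obtain $\widetilde{x^*}\in C(K)^*_+$ with $\widetilde{x^*}|_X=x^*$ and $\|\widetilde{x^*}\|=\|x^*\|$. The subtlety is that $\widetilde{x^*}$ need not be a lattice homomorphism on all of $C(K)$. To fix this, I would instead argue directly on the structure of $\ker x^*$: since $x^*$ is a lattice homomorphism, $\ker x^*$ is a closed ideal of the sublattice $X$, hence $X/\ker x^*$ is a one-dimensional Banach lattice, i.e.\ $\mathbb{R}$. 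Equivalently, the disjointness relation shows $x^*$ is an atom of $X^*$. Now pick any Hahn--Banach extension $\mu\in C(K)^*$ of $x^*$; writing $\mu=\mu^+-\mu^-$ via the Riesz decomposition in $C(K)^*=M(K)$, positivity of $x^*$ combined with the norm-preservation lets me reduce to $\mu$ a positive Radon measure. The measure $\mu$ restricted to $X$ is a lattice homomorphism, and I want to show $\mu$ (restricted to $X$) agrees with a point mass. For this I would use that if $\mu$ had two points in its support that are not "glued" by $X$, one could produce $f,g\in X$ with $f\wedge g\neq 0$ but $\mu(f\wedge g)\neq \mu(f)\wedge\mu(g)$ using Urysohn separation inside $C(K)$, contradicting that $\mu|_X$ is a lattice homomorphism — more precisely, I'd show the support of $\mu$, modulo the identifications forced by $X^\perp$ described in item (1), is a single point of the "quotient", which corresponds to a point $t\in K$, giving $x^*=r\delta_t|_X$ with $r=\|x^*\|\ge 0$.

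The main obstacle I anticipate is the second part of item (2): controlling the extension so that it becomes (or can be replaced by) a genuine point evaluation, since an arbitrary positive extension of a lattice-homomorphism-on-$X$ is typically far from being multiplicative on $C(K)$. The cleanest route is probably to avoid extensions altogether and instead invoke item (1): the quotient $C(K)/X^\perp_\perp \cong \overline{X}=X$ is, by item (1), lattice isometric to $C(K_X)$ where $K_X$ is the quotient of $K$ by the closed equivalence relation "$s\sim t$ iff $\delta_s-\delta_t\in X^\perp$" (the scalar $\gamma$ accounts for the possibility that $X$ vanishes at some points or rescales), via the natural restriction map $C(K)\to X$, $f\mapsto f|$ composed with the quotient; then a lattice homomorphism on $X\cong C(K_X)$ is a positive multiple of $\delta_{[t]}$ for some class $[t]\in K_X$, and pulling back along the quotient map $K\to K_X$ gives $x^*=r\,\delta_t|_X$. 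Making the identification $X\cong C(K_X)$ precise — in particular handling the homogeneity parameter $\gamma$ and the points where functions in $X$ vanish — is the technical heart; Kakutani's representation theorem for the AM-space $X$ (with unit, after passing to a suitable sublattice, or in the general AM setting) together with the description in item (1) should close the gap.
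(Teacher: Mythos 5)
For item (1) you are in essentially the same position as the paper: the statement is exactly the lattice Stone--Weierstrass-type description of $\overline{X}$, and the paper simply cites \cite[Theorem 2.1]{BT} for it. Your sketch identifies the right result to invoke, so no complaint there.

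The genuine gap is in item (2): neither of your two routes actually produces the point $t\in K$. In the measure route, the crux is precisely the step you wave at --- showing that the support of a positive norm-preserving extension $\mu$ of $x^*$ is contained in a single equivalence class of the relation induced by $X^\perp$; the claimed contradiction ``produce $f,g\in X$ with $\mu(f\wedge g)\neq\mu(f)\wedge\mu(g)$'' is not constructed, and constructing it requires a Urysohn-type separation \emph{inside the sublattice} $X$, which is not available at this point (the paper's Corollary \ref{cor:Urysohn like} is proved later and rests on results downstream of this proposition). The paper's alternative proof closes this exact gap differently, via Krein--Milman: an extreme point of the $w^*$-compact convex set of positive norm-preserving extensions of $x^*$ is shown to be extreme in $B_{C(K)^*_+}$, hence a point mass. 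Your ``cleanest route'' is worse off: a closed sublattice of $C(K)$ is in general \emph{not} lattice isometric to any $C(K_X)$ (take $X=c_0$ inside $C(\beta\mathbb{N})$), so the identification $X\cong C(K_X)$ you want to pull evaluations back through does not exist; the correct substitute, Proposition \ref{prop:norming} representing AM-spaces as $C_{ph}(K_X)$, is proved in the paper \emph{using} Proposition \ref{prop:evaluation}(i), so invoking it here would be circular. The paper's primary proof of (2) avoids all of this with an idea absent from your proposal: let $N=\ker x^*$, let $H$ be the ideal generated by $N$ in $C(K)$, verify $\overline{H}\cap X=N$ (so $\overline{H}$ is a proper closed ideal of $C(K)$), write $\overline{H}=\{h: h|_A=0\}$ for a closed $A\subset K$, and pick $a\in A$ with $x(a)\neq 0$ for some $x\in X$; then $N\subset\ker\delta_a\cap X$ forces $\ker x^*=\ker\delta_a\cap X$ and $x^*=r\,\delta_a|_X$. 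You would need to supply either that argument or the extreme-point argument to complete item (2).
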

\begin{proof}
(i) see in e.g. \cite[Theorem 2.1]{BT}.

(ii): We may assume that $x^*\ne 0$, and so $N:=\ker x^*$ is a proper closed ideal in $X$. Let $H$ be the (not necessarily closed) ideal generated by $N$ in $C(K)$, that is,
$$
H:=\bigl\{h\in C(K)\::\: \exists\, x\in N \text{ such that } |h|\leq x\bigr\}
$$
We claim that $\overline{H}\cap X=N$. Indeed, if $x\in X_+$ and $(h_n)_{n=1}^\infty\subset H$ are such that $h_n\to x$, then $x\wedge h_n^{+}\to x$. For every $n\in\N$ there is $x_n\in N$ such that $x_n\geq h_n$, hence we have $|x-x\wedge x_n^{+}|\le |x-x\wedge h_n^{+}|\to 0$, and so $x\wedge x_n^{+}\to x$, as $n\to\infty$. As $x\wedge x_n^{+}\in N$ for every $n\in\N$, we conclude that $x\in N$.

As $\overline{H}$ is a proper closed ideal in $C(K)$, there is a closed $A\subset K$ such that $h\in \overline{H}$ if and only if $h$ vanishes on $A$ (see e.g. \cite[Proposition 2.1.9]{Meyer}). Since $X\not\subset \overline{H}$, there is $a\in A$ and $x\in X$ such that $x(a)\ne 0$. It follows that $\ker\delta_{a}\cap X$ is a closed subspace in $X$ of codimension $1$. Since we also have $N\subset H\subset \ker \delta_{a}$, we conclude that $\ker x^*=N=\ker\delta_{a}\cap X$, and so $x^*$ is a multiple of $\left.\delta_{a}\right|_{X}$.
\end{proof} 

We present next an alternative proof of Proposition \ref{prop:evaluation} (ii). It is a different approach and provides some extra information. 

\begin{prop}
    Let $K$ be a compact Hausdorff space and let $X$ be a (closed) sublattice of $C(K)$. If $x^*$ is a norm-one lattice homomorphism on $X$, then there is $t\in K$ such that $x^*=\left.\delta_t\right|_X$. Thus, for every $x^*\in\text{Hom}(X,\mathbb{R})$ there is $t\in K$ such that $x^*=\|x^*\|\left.\delta_t\right|_X$.
\end{prop}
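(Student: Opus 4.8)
The plan is to prove this as a strengthening of Proposition \ref{prop:evaluation}(ii): instead of merely extracting \emph{some} point $t\in K$ with $x^* = r\delta_t|_X$, we want to show that when $\|x^*\|=1$ we may take $r=1$, i.e.\ $x^* = \delta_t|_X$ for a genuine point $t\in K$. First I would invoke Proposition \ref{prop:evaluation}(ii) (or re-derive it via the ideal $H$ generated by $\ker x^*$ and the closed set $A\subset K$ on which every element of $\overline H$ vanishes) to obtain $t\in K$ and $r\ge 0$ with $x^* = r\,\delta_t|_X$. The point $t$ lies in the support set $A$, meaning there is $x\in X$ with $x(t)\ne 0$; normalizing, we may assume $x\in X_+$ and $x(t)=1$, so $r = r\,x(t) = x^*(x)$.

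The heart of the argument is then to show $r=1$, equivalently that $\delta_t|_X$ has norm $1$ on $X$; since obviously $\|\delta_t|_X\|\le 1$ and $x^* = r\,\delta_t|_X$ has norm $1$, this forces $r\ge 1$, and $r\le 1$ will follow once we know $\|\delta_t|_X\| = 1$... but actually the cleaner route is the reverse: show directly that $r\ge 1$ is impossible to exceed and $r\le 1$ cannot be strict. Here is the mechanism. Because $\|x^*\|=1$, for each $n$ pick $y_n\in B_{X_+}$ with $x^*(y_n)\ge 1-\tfrac1n$, i.e.\ $r\,y_n(t)\ge 1-\tfrac1n$, so $y_n(t)\ge \tfrac{1-1/n}{r}$ while $\|y_n\|_\infty\le 1$. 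Taking the pointwise supremum would leave $X$, but applying Theorem \ref{THMAM} (every lattice homomorphism on the AM-space $X$ attains its norm) we get $y\in B_{X_+}$ with $x^*(y)=1$, hence $r\,y(t)=1$ with $\|y\|_\infty\le 1$; this already gives $r = 1/y(t) \ge 1$. Combined with $r = \|x^*\| \cdot \|\delta_t|_X\|^{-1}$-type bookkeeping — more precisely $r\,\delta_t|_X = x^*$ with $\|x^*\|=1$ and $\|\delta_t|_X\|\le 1$ forcing $r\ge \|x^*\|/\|\delta_t|_X\| \ge 1$ — we need the matching inequality $r\le 1$. For that, observe $1 = x^*(y) = r\, y(t) \le r\,\|y\|_\infty \le r$, which is the same inequality; so instead I would argue $r\le 1$ from $\|\delta_t|_X\|\le 1$ directly: $1 = \|x^*\| = r\,\|\delta_t|_X\| \le r$ gives $r\ge 1$, and to get $r\le 1$ note that the $y\in B_{X_+}$ produced by Theorem \ref{THMAM} satisfies $1 = x^*(y) = r\,\delta_t(y)$, so $\delta_t(y) = 1/r$; but $\delta_t(y)\le \|y\|\le 1$ only gives $1/r\le 1$. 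Thus $r\ge 1$ from both computations — I need a genuinely separate source for $r\le 1$.

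The resolution is that $r\le 1$ is automatic from $\|\delta_t|_X\|\le 1$ \emph{only if} we do not already know $x^*=r\delta_t|_X$ is norm-attaining at a point where $\delta_t$ is large; the correct and honest statement is: $\|x^*\| = r\cdot\|\delta_t|_X\|$ is false in general (the norm of $x^*$ is $r$ times the \emph{sup over $B_X$} of $\delta_t$, which is $\|\delta_t|_X\|\le 1$), so $1 = \|x^*\| = r\,\|\delta_t|_X\| \le r$, giving $r\ge 1$. For the upper bound: by Theorem \ref{THMAM} there is $y\in B_{X_+}$ with $x^*(y)=\|x^*\|=1$, so $r\,y(t)=1$ and since $y\in B_X\subset B_{C(K)}$ we get $y(t)\le 1$, hence $r = 1/y(t)\ge 1$ again — so the only content is $r\ge 1$, and one concludes $r=1$ by noticing that $r>1$ would force $\|\delta_t|_X\| = 1/r < 1$, yet the very same $y$ gives $y(t) = 1/r$, consistent, so this does \emph{not} yield a contradiction. \textbf{The main obstacle}, then, is precisely this: one must rule out the possibility that $\|\delta_t|_X\|<1$, and the fix is to choose $t$ more carefully. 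Rather than taking an arbitrary $t\in A$, I would use that $A = \bigcap\{x^{-1}(0): x\in \ker x^*\}$ and that $\ker x^*$ has codimension one in the \emph{closed} sublattice $X$; one then shows the restriction map $X\to C(A)$ is a lattice homomorphism whose range separates points of $A$ minimally, and picks $t\in A$ realizing $\sup_{z\in B_{X_+}} z(t) = \|\delta_t|_X\|$; the norm-attaining vector $y$ from Theorem \ref{THMAM} has $x^*(y)=1$, its support contains a point $s$ of $A$ with $y(s)=\|y\|_\infty$, and since $x^* = r\delta_t|_X$ kills $\ker x^*$ exactly on $A$, points of $A$ are indistinguishable by $X$ only up to positive scalars, forcing $y(t) = y(s)=\|y\|_\infty \le 1$ and $r\,y(t)=1$, whence $r\le 1$. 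Carrying out this last identification — that the normalization in Theorem \ref{THMAM} can be taken with $\|y\|_\infty = 1$ at the relevant point $t$ — is the delicate part; everything else is bookkeeping with Proposition \ref{prop:evaluation}(ii) and Theorem \ref{THMAM}.
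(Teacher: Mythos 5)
Your reduction to showing $r=1$ in $x^*=r\,\delta_t|_X$ is a reasonable starting point, and your observation that every $s$ in the closed set $A$ (the hull of the ideal generated by $\ker x^*$) satisfies $\delta_s|_X=\lambda_s x^*$ for some $\lambda_s\ge 0$ is correct, since $\ker x^*$ is a hyperplane contained in $\ker(\delta_s|_X)$. But the argument does not close. You candidly derive $r\ge 1$ three times and never obtain $r\le 1$; the proposed fix then rests on two assertions that are not established. First, ``points of $A$ are indistinguishable by $X$ only up to positive scalars, forcing $y(t)=y(s)=\|y\|_\infty$'' is false as stated: proportionality means $y(s)=\lambda_s x^*(y)=\lambda_s$, and the scalars $\lambda_s$ genuinely vary over $A$ (in the paper's own example $X=\{f: f(-1)=2f(0)\}\subset C(\{-1\}\cup[0,1])$ one has $y(-1)=2y(0)$ for all $y\in X$, with both $-1$ and $0$ in the relevant $A$). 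Second, the claim that the norm-attaining vector $y$ (with $x^*(y)=1$, $\|y\|_\infty=1$) attains the value $1$ at a point \emph{of $A$} is exactly the content of the proposition, restated rather than proved: it says precisely that $\sup_{s\in A}\lambda_s=1$ is attained. This needs a separate argument. One workable route: if $\{y=1\}\cap A=\emptyset$, pick $x_0\in(\ker x^*)_+$ with $x_0\ge\varepsilon>0$ on the compact set $\{y=1\}$ (possible since $\ker x^*$ is an ideal and $\{y=1\}$ misses $A$), and note that $y_n=(y-nx_0)^+$ satisfies $x^*(y_n)=1$ while $\|y_n\|_\infty\le 1-\delta$ for large $n$, contradicting $\|x^*\|=1$. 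Without something of this kind your proof is incomplete.

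For comparison, the paper avoids this localization problem entirely: it extends $x^*$ to a positive norm-preserving functional on $C(K)$ (Lotz), takes an extreme point $y_0^*$ of the $w^*$-compact convex set of such extensions (Krein--Milman), and uses the fact that $x^*$ is an atom of $X^*$ to upgrade $y_0^*$ to an extreme point of the positive unit ball of $C(K)^*$, which must be a point evaluation $\delta_t$ with $\delta_t|_X=x^*$. That argument produces the correct point $t$ in one step, whereas your approach must still prove that a suitable $t$ exists inside $A$.
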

\begin{proof}
First, we consider the set $\mathcal{E}_{x^*}$ of norm preserving positive extensions of $x^*$ to $\mathcal{C}(K)$, that is,
$$\mathcal{E}_{x^*}=\{y^*\in \mathcal{C}(K)^*\::\: \|y^*\|\leq1, \quad y^*\geq 0, \quad \left.y^*\right|_X=x^*\},$$
which is non-empty by \cite[Corollary 1.3]{Lotz}. Moreover, since $\mathcal{E}_{x^*}$ is $w^*$-closed and convex, we deduce from Krein-Milman theorem that the set of its extreme points is non-empty. 

Fix an extreme point $y_0^*$ of $\mathcal{E}_{x^*}$. We will prove that $y_0^*$ is also an extreme point of the set $\{y^*\in B_{\mathcal{C}(K)^*}\::\: y^*\geq 0\}$. Let $y_1^*,\,y_2^*\in B_{C(K)_+^*}$ be such that $y_0^*=\frac{1}{2}y_1^*+\frac{1}{2}y_2^*$. If we restrict these functionals to $X$, we obtain
$$
x^*=\left.y_0^*\right|_X=\frac{1}{2}\left.y_1^*\right|_X+\frac{1}{2}\left.y_2^*\right|_X.
$$
Since $x^*$ is an atom in $X^*$, we deduce from the above identity that $\left.y_1^*\right|_X=\left.y_2^*\right|_X=x^*$. This shows that $y_1^*,\,y_2^*\in \mathcal{E}_{x^*}$ and given that $y_0^*$ is an extreme point of $\mathcal{E}_{x^*}$, we have $y_0^*=y_1^*=y_2^*$. Therefore, $y_0^*$ is a non-trivial extreme point of $\{y^*\in B_{\mathcal{C}(K)^*}\::\: y^*\geq 0\}$, so there exists $t\in K$ such that $y_0^*=\delta_t$.
\end{proof}

Let $E$ be a Banach space and $A\subset B_{E^*}$ be a \emph{positively balanced} set, that is, a set such that $rx^*\in A$ for every $x^*\in A$ and $r\in[0,1]$. Let $C_{ph}(A)$ be the space of all weak$^*$ continuous functions $f$ on $A$ which are \emph{positively homogeneous}, i.e. $f(rx^*)=rf(x^*)$, for all $r\in[0,1]$ and $x^*\in A$. If the subset $A$ is weak$^*$ compact, $C_{ph}(A)$ endowed with the supremum norm is an AM-space, but in general it does not have a strong unit. We say that a subset $A\subset E^*$ is $\lambda$-norming, for some $\lambda\geq 1$, if $\sup\{|x^*(x)|\::\: x^*\in A\cap B_{E^*}\}\geq \frac{1}{\lambda} \|x\|$ for every $x\in E$. For a Banach lattice $X$ let $K_X:=\text{Hom}(X,\mathbb{R})\cap B_{X^*}$, endowed with the weak$^*$ topology, making it a compact space. The main role that renormings of AM-spaces play in our study is due to the following proposition. Moreover, it provides an explicit representation for an AM-space $X$ as $C_{ph}(K_X)$ that will be useful for our purposes. The result is mentioned in \cite[Corollaire 1.31]{GdR}, but despite its similarities with Kakutani's Theorem \cite{Kakutani-AM spaces}, it does not seem to be very well known.

\begin{prop}\label{prop:norming}
A Banach lattice $X$ is $\lambda$-lattice isomorphic to an AM-space if, and only if, the set $K_X$ is $\lambda$-norming. In this case it is $\lambda$-lattice isomorphic to $C_{ph}(K_{X})$.
\end{prop}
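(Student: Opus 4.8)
The plan is to prove both implications by a careful bookkeeping of the canonical map $\Phi: X \to C_{ph}(K_X)$ defined by $\Phi(x)(x^*) = x^*(x)$ for $x^* \in K_X$; this is clearly a lattice homomorphism since each $x^* \in K_X$ is one, and it is positively homogeneous in the second variable because $K_X$ is positively balanced (if $x^*$ is a norm-$\leq 1$ lattice homomorphism so is $rx^*$ for $r \in [0,1]$) and linear in the first. The weak$^*$-to-norm continuity of $\Phi(x)$ is automatic. The key quantitative fact is that $\sup_{x^* \in K_X}|x^*(x)| = \sup\{|x^*(x)| : x^* \in \Hom(X,\R) \cap B_{X^*}\}$, so that $\|\Phi(x)\|_\infty = \sup\{|x^*(x)| : x^* \in K_X\}$; by definition, $K_X$ being $\lambda$-norming means exactly $\|\Phi(x)\|_\infty \geq \frac{1}{\lambda}\|x\|$, while $K_X \subset B_{X^*}$ always gives $\|\Phi(x)\|_\infty \leq \|x\|$. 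Hence $\Phi$ is a $\lambda$-lattice isomorphism onto its image whenever $K_X$ is $\lambda$-norming, and since $C_{ph}(K_X)$ is an AM-space (with the sup norm), this proves the "if" direction together with the explicit representation — modulo showing $\Phi$ is \emph{onto} $C_{ph}(K_X)$.

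For surjectivity of $\Phi$, the plan is to argue that $\Phi(X)$ is a closed sublattice of $C_{ph}(K_X)$ that separates points and is "large enough". Closedness: since $\Phi$ is a $\lambda$-isomorphism and $X$ is complete, $\Phi(X)$ is closed. To see $\Phi(X) = C_{ph}(K_X)$ one uses a Stone–Weierstrass-type argument adapted to the positively homogeneous setting: the subalgebra-like structure here is the sublattice generated by $\Phi(X)$, and one checks that $\Phi(X)$ separates the points of $K_X$ in the appropriate sense (two distinct norm-$\leq 1$ lattice homomorphisms $x^*, y^*$ — after accounting for the fact that $0 \in K_X$ and positive scaling identifies rays — are separated by some $\Phi(x)$, which is immediate since $x^* \neq y^*$ means $x^*(x) \neq y^*(x)$ for some $x \in X$). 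Then a lattice version of Stone–Weierstrass for $C_{ph}(A)$ on a weak$^*$-compact positively balanced $A$ (which should be quotable or a routine adaptation of Kakutani's arguments, cf.\ \cite{GdR}) gives density, and combined with closedness, surjectivity. The \textbf{main obstacle} I anticipate is precisely this surjectivity step: one must be careful that the Stone–Weierstrass argument respects positive homogeneity and the presence of the apex $0 \in K_X$, and that the sublattice $\Phi(X)$, not merely a subalgebra, is dense — this requires the lattice version of the theorem rather than the classical one.

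For the converse ("only if"), suppose $T: X \to Y$ is a $\lambda$-lattice isomorphism onto an AM-space $Y$, so $\frac{1}{\lambda}\|x\| \leq \|Tx\| \leq \|x\|$ (after normalization). By Kakutani's theorem $Y$ embeds lattice-isometrically as a closed sublattice of some $C(\Omega)$; the lattice homomorphisms on $Y$ are, by Proposition \ref{prop:evaluation}(ii), restrictions $r\delta_\omega|_Y$, and the norm-$\leq 1$ ones of these form a $1$-norming subset of $B_{Y^*}$ because $\|y\|_\infty = \sup_{\omega}|\delta_\omega(y)|$. Pulling back along $T$: if $y^* \in \Hom(Y,\R) \cap B_{Y^*}$ then $T^* y^* \in \Hom(X,\R)$ and $\|T^* y^*\| \leq \|y^*\| \leq 1$, so $T^* y^* \in K_X$; and for $x \in X$, $\sup_{y^*}|y^*(Tx)| = \|Tx\| \geq \frac{1}{\lambda}\|x\|$ while $\sup_{y^*}|y^*(Tx)| = \sup_{y^*}|(T^*y^*)(x)| \leq \sup_{z^* \in K_X}|z^*(x)|$. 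Hence $K_X$ is $\lambda$-norming, as required. This direction is routine; the only mild care needed is the normalization of the isomorphism constant so that the inequalities chain correctly, which I would state explicitly at the outset.
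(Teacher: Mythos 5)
Your overall architecture --- the evaluation map $\Phi:X\to C_{ph}(K_X)$, the two norm estimates $\frac{1}{\lambda}\|x\|\le\|\Phi(x)\|_\infty\le\|x\|$, and the pull-back of the $1$-norming family of evaluations for the converse --- is exactly the paper's. The ``only if'' direction is complete and correct as you state it.

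The gap is in the surjectivity of $\Phi$, which you correctly flag as the main obstacle but do not close, and the criterion you propose to verify is the wrong one. The condition ``$x^*\neq y^*$ implies $x^*(x)\neq y^*(x)$ for some $x$'' is the Stone--Weierstrass hypothesis for sub\emph{algebras}; for a \emph{vector sublattice} $V$ of $C(K)$ mere point separation does not give density. For instance, $\{f\in C[0,1]:\ f(1)=2f(0)\}$ is a proper closed linear sublattice of $C[0,1]$ that separates the points of $[0,1]$. What a Kakutani--Stone-type argument actually requires is two-point interpolation, i.e.\ that whenever $\delta_{x^*}$ and $\delta_{y^*}$ are linearly independent on the target lattice, their restrictions $\delta_{x^*}|_V$ and $\delta_{y^*}|_V$ remain \emph{linearly independent} --- equivalently, one must exhibit $x\in X$ with $x^*(x)=0\neq y^*(x)$. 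Moreover, the positively homogeneous version of this density criterion is not safely ``quotable or routine'': it is precisely Proposition \ref{prop:evaluation}(i) of the paper (taken from \cite{BT}), and the paper's proof of Proposition \ref{prop:norming} consists of invoking it and checking the linear-independence condition, which holds trivially because non-proportional lattice homomorphisms on $X$ are linearly independent functionals. So your plan is repairable --- the stronger condition does hold here --- but as written the density step rests on an insufficient separation property and an unestablished Stone--Weierstrass variant.
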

\begin{proof}
First, note that if $X$ is a closed sublattice of $C(K)$, for some $K$, then 
$$
\|x\|=\sup_{t\in K}|x(t)|=\sup_{t\in K} \bigl|\left.\delta_t\right|_X(x)\bigr|\leq \sup_{x^*\in K_X}|x^*(x)|\leq \|x\|,
$$
and so $K_X=\text{Hom}(X,\mathbb{R})\cap B_{X^*}$ is $1$-norming. \smallskip

Suppose that $X$ is $\lambda$-lattice isomorphic to an AM-space $Y$, for some $\lambda\geq 1$. Let $T\colon X\to Y$ be a lattice isomorphism such that $\|T\|\|T^{-1}\|\leq \lambda$. As $K_{Y}=\text{Hom}(Y,\mathbb{R})\cap B_{Y^*}$ is $1$-norming, for every $x\in X$ we have that
\begin{eqnarray*}
\sup \{|x^*(x)|\::\: x^*\in K_X\} &= & \sup \big\{|(T^{-1})^*x^*(Tx)|\::\: x^*\in K_X\bigr\} \\
& \geq & \frac{1}{\|T\|} \sup \{|y^*(Tx)|\::\: y^*\in K_Y\} \\
&\geq & \frac{1}{\|T\|} \|Tx\| \geq \frac{1}{\|T\|\|T^{-1}\|} \|x\|\geq \frac{1}{\lambda}\|x\|. 
\end{eqnarray*}

\smallskip Conversely, let us suppose that $\text{Hom}(X,\mathbb{R})$ is $\lambda$-norming. Define $J:X\to C_{ph}(K_{X})$ by $[Jx](x^*):=x^*(x)$. It is easy to see that $Jx$ is in fact an element of $C_{ph}(K_{X})$. Clearly, $J$ is a lattice homomorphism, and it follows from our assumption that $\frac{1}{\lambda}\|x\|\le \|Jx\|\le \|x\|$. Hence, $JX$ is a closed sublattice of $C_{ph}(K_{X})$. We finish proving that $JX$ is dense in $C_{ph}(K_{X})$. According to part (i) of Proposition \ref{prop:evaluation} it is enough to show that if $x^*$ and $y^*$ in $K_X$ are not proportional, then there is $x\in X$ such that $(Jx)(x^*)=0\ne (Jx)(y^*)$, which is trivial. We conclude that $X$ is $\lambda$-lattice isomorphic to $C_{ph}(K_{X})$.
\end{proof}

\begin{rem}
Note that if a Banach lattice $X$ is merely isomorphic (as a Banach space) to an AM-space, then it is already \textit{lattice} isomorphic to some AM-space (this is a consequence of \cite[Corollary 2.2]{HMST}).  In general, it is possible for two Banach lattices to be isometric as Banach spaces, while their collections of lattice homomorphisms \textit{differ completely}: $\ell_2$ and $L_2[0,1]$ are linearly isometric and we know that $\text{Hom}(\ell_2,\mathbb{R})=\{\lambda e_n^*\::\: \lambda\geq 0,\:n\in\mathbb{N}\}$, whereas $\text{Hom}(L_2[0,1],\mathbb{R})=\{0\}$. Also, the proposition fails if $\text{Hom}(X,\mathbb{R}$) is merely total (i.e., $x^*(x)=0$ for every $x^*\in \text{Hom}(X,\mathbb{R})$ if and only if $x=0$), and not norming: consider $X=\ell_p$, for $p\in[1,+\infty)$.
\end{rem}

We can now extend Proposition \ref{PropNotNA} to general AM-spaces, and in the process recover a classical result (\cite[Lemma 1.b.10]{LinTza2}).

\begin{thm}\label{thm:several}
Given an AM-space $X$, the following conditions are equivalent:
\begin{enumerate}
\item $X$ is lattice isometric to $c_0(\Gamma)$ for some cardinal $\Gamma$;
\item $X$ is order continuous;
\item Every lattice homomorphism on $X$ is a multiple of the coordinate functional of some atom;
\item Every lattice homomorphism on $X$ attains its norm for every lattice renorming of $X$; 
\item For every $x\in X$ and $\varepsilon>0$ there are only finitely many lattice homomorphisms $x^*$ of norm $1$ with $|x^*(x)|\geq \varepsilon$.
\end{enumerate}
\end{thm}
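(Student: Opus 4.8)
The plan is to prove the chain of equivalences by establishing $(1)\Rightarrow(2)\Rightarrow(3)\Rightarrow(5)\Rightarrow(1)$ and $(3)\Leftrightarrow(4)$, using the representation $X\cong C_{ph}(K_X)$ from Proposition \ref{prop:norming} as the main structural tool. First I would dispatch the easy implications. $(1)\Rightarrow(2)$ is classical. $(2)\Rightarrow(3)$ is exactly Corollary \ref{cor:coordinate}. For $(3)\Rightarrow(4)$: a multiple of a coordinate functional of an atom attains its norm for every lattice renorming by Proposition \ref{prop:coordinate} (implication (i)$\Rightarrow$(ii)), and $(4)\Rightarrow(3)$ follows from Corollary \ref{cor:converse-coordinate functionals} once we note that for any nonzero lattice homomorphism $x^*$ on a Banach lattice, condition $(4)$ produces, for each renorming, a norm-attaining vector; one has to be a little careful here since $(4)$ as stated does not a priori fix a single vector $x_0$ working for all renormings, so I would instead argue $(4)\Rightarrow(3)$ via $(4)\Rightarrow(5)\Rightarrow(1)\Rightarrow(2)\Rightarrow(3)$, or more directly show that if $x^*$ is not a coordinate functional of an atom then the renorming $\|\cdot\|_\mu$ with $\mu$ a suitable positive functional not disjoint from... — but AM-spaces need not have strictly positive functionals, so Theorem \ref{prop:strictly-positive-functional} does not apply directly. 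This is where the real work lies.

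For the heart of the matter I would prove $(3)\Rightarrow(5)$ and $(5)\Rightarrow(1)$, and then get $\neg(3)\Rightarrow\neg(4)$ by an explicit renorming. For $(3)\Rightarrow(5)$: if every lattice homomorphism is a multiple of a coordinate functional $\lambda_{x_\gamma}$ of an atom $x_\gamma$, then for fixed $x\in X$ and $\varepsilon>0$ the normalized lattice homomorphisms $x^*$ with $|x^*(x)|\ge\varepsilon$ correspond to atoms $x_\gamma$ with $|\lambda_{x_\gamma}(x)|\ge \varepsilon\|x_\gamma\|$ ... wait, normalization: $\|\lambda_{x_\gamma}\| = 1/\|x_\gamma\|$, so the norm-one lattice homomorphism supported on the band of $x_\gamma$ is $\|x_\gamma\|\lambda_{x_\gamma}$, and the condition reads $|\lambda_{x_\gamma}(x)|\,\|x_\gamma\|\ge\varepsilon$. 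Using that $P_{x_\gamma}x = \lambda_{x_\gamma}(x)x_\gamma$ and the atoms generate pairwise disjoint bands, I would show $\sum_{\gamma}|\lambda_{x_\gamma}(x)|\,\|x_\gamma\| \le C\|x\|$ by an AM-space estimate on finite disjoint sums (in an AM-space, $\|\bigvee_{i\in F}|P_{x_{\gamma_i}}x|\,\| = \max_i \|P_{x_{\gamma_i}}x\|$, not the sum — so I need instead to bound the number of indices with large coordinate directly from the norm; since $\|\sum_{i\in F}|P_{x_{\gamma_i}}x|\,\|\le\|x\|$ and for disjoint atoms in an AM-space the norm of the sum is the max, this forces each individual $\|P_{x_{\gamma_i}}x\|\le\|x\|$, which does NOT bound the count). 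The correct route: $(3)$ forces $X$ to be atomic, and an atomic AM-space with order-continuous-type behavior must be $c_0(\Gamma)$ — better to prove $(3)\Rightarrow(2)$ first (an atomic AM-space in which $K_X$ consists only of atom-coordinate functionals is order continuous, because the closed span of the atoms is an order-continuous ideal and by Proposition \ref{prop:norming} it is all of $X$), then $(2)\Rightarrow(5)$ uses that an order-continuous AM-space is lattice isometric to $c_0(\Gamma)$ (this is the classical \cite[Lemma 1.b.10]{LinTza2} being recovered), from which $(5)$ and $(1)$ are immediate.

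Finally, for $\neg(5)\Rightarrow\neg(4)$ (equivalently $\neg(1)\Rightarrow\neg(4)$), I would take an AM-space $X$ that is not lattice isometric to $c_0(\Gamma)$, realize it as a closed sublattice of $C(K)$ with $K=K_X$ compact, and find an accumulation point: since $X\ne c_0(K_X)$, $K_X$ is not discrete, so there is $t^*\in K_X$ that is an accumulation point, and in fact (using failure of $(5)$) one can choose $x\in X$, $\varepsilon>0$ and an \emph{infinite} sequence $(t_n^*)$ of norm-one lattice homomorphisms with $t_n^*\to t^*$ weak$^*$ and $|t_n^*(x)|\ge\varepsilon$; then the renorming $|||y|||=\|y\|+\sum_n 2^{-n}|t_n^*(y)|$ is an equivalent lattice norm, and as in Proposition \ref{PropNotNA} the functional $t^*$ has $|||t^*|||=1$ but is not norm-attaining because any $y$ with $t^*(y)=1=\|y\|$ would, by weak$^*$ convergence, have $t_n^*(y)\to 1$, forcing $|||y|||>1$. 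The delicate point is ensuring $t^*$ itself is a lattice homomorphism that is \emph{not} a coordinate functional (so that $(4)$ is genuinely being violated and not vacuously): since $t^*$ is an accumulation point of $K_X$ it cannot be a coordinate functional of an atom by Proposition \ref{prop:coordinate BL} (its disjoint complement is not weak$^*$-closed). I expect the main obstacle to be the bookkeeping in $(3)\Rightarrow(2)\Rightarrow(1)$, namely verifying cleanly that an AM-space whose lattice homomorphisms are all atom-coordinate functionals is lattice isometric to $c_0(\Gamma)$ rather than merely to a sublattice of $c_0(\Gamma)$ containing $c_{00}(\Gamma)$ — one must rule out, e.g., $c$ sitting inside $c_0(\mathbb N\cup\{\infty\})$ — which is handled by observing that the constant-at-infinity coordinate is itself a non-atom lattice homomorphism, contradicting $(3)$.
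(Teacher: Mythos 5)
There is a genuine gap: you never prove that condition (5) implies any of the other conditions. Your plan announces ``$(3)\Rightarrow(5)$ and $(5)\Rightarrow(1)$'', but the implication $(5)\Rightarrow(1)$ is then silently dropped; what you actually establish is $(1)\Leftrightarrow(2)\Leftrightarrow(3)\Rightarrow(4)\Rightarrow(5)$ together with $(1)\Rightarrow(5)$, which leaves (5) as a possibly strictly weaker condition. The parenthetical ``$\neg(5)\Rightarrow\neg(4)$ (equivalently $\neg(1)\Rightarrow\neg(4)$)'' presupposes $(1)\Leftrightarrow(5)$, which is exactly what is missing, and the inference ``since $X\ne c_0(K_X)$, $K_X$ is not discrete'' in that paragraph is itself the content of $(5)\Rightarrow(1)$ rather than something you may assume. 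In the paper, (v)$\Rightarrow$(i) is a real argument: one enumerates $\Hom(X,\mathbb{R})\cap S_{X^*}=\{x^*_\alpha\}_{\alpha\in\Gamma}$, uses (v) to see that $x\mapsto (x^*_\alpha(x))_\alpha$ lands in $c_0(\Gamma)$, gets isometry from Proposition \ref{prop:norming}, and --- the delicate point you flag for $(3)\Rightarrow(1)$ but do not address here --- proves surjectivity via the density criterion of Proposition \ref{prop:evaluation}(i) combined with the Urysohn-type Corollary \ref{cor:Urysohn like}, which in turn rests on Theorem \ref{THMAM}. Without some version of this, the five conditions are not shown to be equivalent.

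Two secondary points. First, in your $\neg(5)\Rightarrow\neg(4)$ step you extract a weak$^*$-convergent sequence $t_n^*\to t^*$; the set $K_X$ is weak$^*$-compact but in general not metrizable, so from an infinite set one only gets a cluster point. The paper's corresponding step (its (iv)$\Rightarrow$(v)) works with a cluster point $z^*$ of $(y_n^*)$ and Lemma \ref{lem:renorming & NA}: disjointness gives $\mu(z_0)=0$ while the cluster-point property gives some $y_n^*(z_0)>0$. Your argument survives the same repair, since you only need one $t_n^*$ with $t_n^*(|y|)>0$, not convergence of $t_n^*(y)$. Second, your route $(3)\Rightarrow(2)$ via ``the closed ideal generated by the atoms is order continuous and must be all of $X$'' is a sound alternative to the paper's cycle (which instead closes the loop as (i)$\Rightarrow$(ii)$\Rightarrow$(iii)$\Rightarrow$(iv)$\Rightarrow$(v)$\Rightarrow$(i)), but it needs the quotient argument spelled out (a proper closed ideal would yield, by Proposition \ref{prop:norming} applied to the quotient AM-space, a nonzero lattice homomorphism vanishing on all atoms), and it outsources $(2)\Rightarrow(1)$ to \cite[Lemma 1.b.10]{LinTza2}, a result the paper deliberately re-derives as a byproduct rather than assumes.
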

\begin{proof}
(i)$\Rightarrow$(ii) is clear, (ii)$\Rightarrow$(iii) follows from Corollary \ref{cor:coordinate}, (iii)$\Rightarrow$(iv) follows from Proposition \ref{prop:coordinate}.\medskip

(iv)$\Rightarrow$(v): Assume that there is $\varepsilon_0>0$, $x_0\in X_+$ and an infinite sequence $(y_n^*)_{n=1}^\infty$ of distinct homomorphisms in $S_{X^*}$ such that $y^*_n(x_0)\geq \varepsilon_0$, for every $n\in\N$. As $\text{Hom}(X,\mathbb{R})\cap B_{X^*}$ is weak$^*$-compact, we can find a lattice homomorphism $z^*\in B_{X^*}$ in $\overline{(y_n^*)_{n=1}^\infty}^{w^*}$. Note that $z^*\ne 0$ since it must fulfill $z^*(x_0)\geq \varepsilon_0>0$. We may assume that $y_n^*$ is non-proportional, and hence disjoint with $z^*$, for every $n\in\N$. Let $\mu:=\sum_{n=1}^\infty\frac{1}{2^n}y^*_n\in X^*$, which is disjoint with $z^*$. If $z_0\in X_{+}$ is such that $\|z_0\|_\mu=1$ and $z^*(z_0)=\|z^*\|_{\mu}$, then by Lemma \ref{lem:renorming & NA} we have $\mu(z_0)=0$. On the other hand, as $z^*(z_0)>0$ and $z^*\in \overline{(y_n^*)_{n=1}^\infty}^{w^*}$ there is $n\in\N$ so that $\mu(z_0)\geq 2^{-n} y_n^*(z_0)>0$. Contradiction.\medskip

(v)$\Rightarrow$(i) Let us enumerate the set $\text{Hom}(X,\mathbb{R})\cap S_{X^*}=\{x^*_\alpha:\alpha\in\Gamma\}$. It is immediate that $x\in X\mapsto (x_\alpha^*(x))_{\alpha\in\Gamma}\in c_0(\Gamma)$ defines a lattice embedding of $X$ into $c_0(\Gamma)$. This embedding is an isometry due to Proposition \ref{prop:norming}. In order to check the surjectivity of the mapping, by part (i) of Proposition \ref{prop:evaluation} it is enough to show that for every distinct $\beta,\gamma\in\Gamma$ there is $x\in X$ such that $x^*_\beta(x)=0\ne x^*_\gamma(x)$, which follows immediately from Corollary \ref{cor:Urysohn like}.
\end{proof}

\begin{rem}
    We could add a sixth equivalent statement to the previous theorem: $X$ is linearly isometric to $c_0(\Gamma)$. Recall that a Banach-Stone type theorem holds for AM-spaces in the sense that two AM-spaces which are linearly isometric must also be lattice isometric. As far as we know, this was first observed by Lacey \cite[Corollary 2 to Theorem 4 of Section 19, p. 188]{Lacey-book}. 
\end{rem}

One might wonder whether the above result, specifically the equivalence (iii)$\Leftrightarrow$(iv), could be generalized to arbitrary Banach lattices. This is not possible, as the following example, inspired by \cite{LW}, shows. We are going to construct a non-atomic Banach lattice $X$ which has exactly one lattice homomorphism  (up to scaling), that is norm-attaining for any lattice renorming of $X$.

\begin{example}\label{rem:counterexample-question3}
Let $\omega_1$ denote the first uncountable ordinal, and let $C([0,\omega_1],L_2[0,1])$ be the space of continuous functions on the ordinal interval $[0,\omega_1]$ with values on $L_2[0,1]$. Consider
$$
X=\bigl\{F\in C\bigl([0,\omega_1],L_2[0,1]\bigr)\::\: F(\omega_1)=a_F\mathbf{1}_{[0,1]} \text{ for some } a_F\in\mathbb{R}\bigr\}.
$$
It is clear that $x^*(F)=a_F$ is a lattice-homomorphism. For any lattice norm in $X$, it attains its norm by a similar argument as in Example \ref{ex:omega1}. Now, assume that $y^*\in X^*$ is a lattice homomorphism. For every countable limit ordinal $\alpha$ the space $C\bigl([0,\alpha],L_2[0,1]\bigr)$ embeds as a projection band in $X$, and so according to \cite[Corollary 3.5]{DMCRARZ}, $y^*$ vanishes on $C\bigl([0,\alpha],L_2[0,1]\bigr)$. Hence, $y^*$ vanishes on the kernel of $x^*$, and so the two functionals are proportional. \vspace{2pt}

Note that $X$ is atomless. Indeed, given $F\in X$, $F>0$, there exists an isolated point $\alpha\in [0,\omega_1)$ such that $F(\alpha)>0$ and keeping in mind that $L_2[0,1]$ is non-atomic, there is $g\in L_2[0,1]\cap[0,F(\alpha)]$ which is non-proportional to $F(\alpha)$; then $\mathbf{1}_{\alpha}g\in [0,F]$ is non-proportional to $F$ implying that the latter is not an atom.
\end{example}\medskip

We conclude this section with some results in the spirit of Section \ref{section:Coordinate functionals on a Banach lattice}. The following is a characterization of coordinate functionals of atoms available exclusively for AM-spaces which generalizes the (probably) well-known fact that the norm-one lattice homomorphisms on $C(K)$ of this type are precisely the evaluations $\delta_t$ at isolated points of $K$. Note that the conditions in the following result are not equivalent in general to the conditions in Proposition \ref{prop:coordinate BL}, as Example \ref{rem:counterexample-question3} shows.

\begin{prop}\label{prop:coordinate functionals are isolated}
Let $x^*$ be a lattice homomorphism on an AM-space $X$ with $\|x^*\|=1$. The following conditions are equivalent:
\begin{enumerate}
\item $x^*$ is a coordinate functional of an atom in $X$.
\item $K_X\backslash\{\lambda x^*\::\: \lambda>0\}$ is $w^*$-closed. 
\item No nonzero multiple of $x^*$ is a weak* accumulation point of $K_X\cap S_{X^*}$.
\item There is a compact Hausdorff space $K$ such that $X$ embeds as a closed sublattice of $C(K)$ in a way that $x^*=\delta_{t}|_{X}$, and $1_{\{t\}}\in X$, where $t$ is an isolated point in $K$.
\end{enumerate}    
In this case, $x^*$ is an isolated point in $K_X\cap S_{X^*}$ with respect to the weak* topology.
\end{prop}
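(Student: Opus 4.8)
The plan is to prove the equivalence of (i)--(iv) by a cycle of implications, exploiting the representation $X \cong C_{ph}(K_X)$ from Proposition \ref{prop:norming} together with the basic facts about coordinate functionals of atoms collected in Proposition \ref{prop:coordinate} and Proposition \ref{prop:coordinate BL}. Throughout I will write $A := \{\lambda x^* : \lambda \ge 0\}$ for the ray generated by $x^*$ inside $K_X$, so that the complement $K_X \setminus A$ in (ii) is understood. Note that $K_X$ is always $w^*$-compact (Hausdorff), and (ii), (iii) are the natural ``$x^*$ is topologically detached from the rest of $K_X$'' conditions.

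\textbf{(i)$\Rightarrow$(iv).} Suppose $x^*$ is (a positive multiple of) the coordinate functional of an atom $x_0 \in X_+$. Normalizing, assume $x^*(x_0) = 1$; then by Proposition \ref{prop:coordinate}, $x_0^d = \ker x^*$ and $P_{x_0}y = x^*(y)x_0$. Using the embedding $J \colon X \to C_{ph}(K_X)$ from Proposition \ref{prop:norming} (or any Kakutani-type embedding $X \hookrightarrow C(K)$), I want to realize $x^*$ as $\delta_t|_X$ with $t$ isolated and $1_{\{t\}} \in X$. The key point is that $Jx_0$ is a function on $K_X$ supported (after normalization) at the single point $x^*/\|x^*\|$: indeed for any $y^* \in K_X$ disjoint from $x^*$ we have $y^*(x_0) = 0$ by the last assertion of Proposition \ref{prop:coordinate}, and the only element of $K_X$ not disjoint from $x^*$ is a positive multiple of $x^*$ itself (lattice homomorphisms are disjoint or proportional). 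Hence $Jx_0 = \chi_{\{x^*\}}$ as an element of $C_{ph}(K_X)$ (using positive homogeneity to pin down its value on the whole ray), which forces $x^*$ to be an isolated point of $K_X$; then $K := K_X$, $t := x^*$ works, with $1_{\{t\}} = Jx_0 \in JX$.

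\textbf{(iv)$\Rightarrow$(i) and (iv)$\Rightarrow$(iii).} If $X \subseteq C(K)$ closed with $x^* = \delta_t|_X$, $t$ isolated, and $1_{\{t\}} \in X$, then $1_{\{t\}}$ is an atom of $C(K)$ lying in $X$, hence an atom of $X$ (an ideal-type argument: $[0, 1_{\{t\}}]$ in $X$ is contained in $[0,1_{\{t\}}]$ in $C(K)$, which is a half-line), and $x^*(1_{\{t\}}) = 1 > 0$, so Proposition \ref{prop:coordinate}(iv)$\Rightarrow$(i) gives (i). For (iii): since $t$ is isolated, $\delta_t$ is weak$^*$-isolated among evaluations, and more carefully, any net $(y^*_\gamma)$ in $K_X \cap S_{X^*}$ with $y^*_\gamma \to \lambda x^*$ weak$^*$ for some $\lambda > 0$ would have $y^*_\gamma(1_{\{t\}}) \to \lambda > 0$; but $y^*_\gamma = r_\gamma \delta_{s_\gamma}|_X$ with $r_\gamma \le 1$ and $y^*_\gamma(1_{\{t\}}) = r_\gamma$ if $s_\gamma = t$ and $0$ otherwise, so eventually $s_\gamma = t$ and then $y^*_\gamma = r_\gamma x^*$ is a multiple of $x^*$; thus no $y^*_\gamma$ off the ray $A$ can converge to a nonzero multiple of $x^*$, proving (iii) (and simultaneously the final assertion that $x^*$ is isolated in $K_X \cap S_{X^*}$).

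\textbf{(iii)$\Rightarrow$(ii)$\Rightarrow$(i).} The implication (iii)$\Rightarrow$(ii): if no nonzero multiple of $x^*$ is a weak$^*$-accumulation point of $K_X \cap S_{X^*}$, I claim $K_X \setminus A$ is weak$^*$-closed. Take a net in $K_X \setminus A$ converging weak$^*$ to some $z^* \in K_X$; if $z^* \in A$, say $z^* = \lambda x^*$ with $\lambda > 0$, this contradicts (iii) after normalizing the net to $S_{X^*}$ (using that one can pass to $y^*_\gamma / \|y^*_\gamma\|$ once $\|y^*_\gamma\|$ is bounded below, which holds near $\lambda x^*$ with $\lambda > 0$); if $z^* = 0 \in A$, one has to rule out a net in $K_X \setminus A$ converging to $0$ — but $0$ is in the closure of \emph{every} infinite subset's tail inside $K_X$, so this is fine only because we treat $A$ as containing $0$ and... here I need to be slightly careful and argue that the problematic case is exactly convergence to a \emph{nonzero} multiple. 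Finally (ii)$\Rightarrow$(i): if $K_X \setminus A$ is weak$^*$-closed, then $A = K_X \setminus (K_X \setminus A)$ is weak$^*$-open in $K_X$, so $x^*$ has a weak$^*$-neighborhood in $K_X$ meeting only the ray; translating back, $x^{*d} \cap B_{X^*}$, which is $K_X$-side described by the functionals disjoint from $x^*$, is seen to be weak$^*$-closed (the disjointness relation is weak$^*$-closed on the compact $K_X$ and misses the isolated ray), whence by Proposition \ref{prop:coordinate BL}(ii)$\Rightarrow$(i), $x^*$ is a coordinate functional of an atom.

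\textbf{Main obstacle.} The delicate point is handling the ray $A$ versus the single point $x^*$, and in particular the role of the zero functional: since $0 \in K_X$ always lies in the weak$^*$-closure of any infinite family of distinct lattice homomorphisms scaled down, one must phrase (ii) and (iii) so that only \emph{nonzero} multiples of $x^*$ are forbidden as accumulation points, and check that the normalization/de-normalization between $K_X$ and $K_X \cap S_{X^*}$ is legitimate near a nonzero multiple of $x^*$ (where norms are bounded below). Relatedly, passing cleanly between the abstract ``$x^{*d}$ is $w^*$-closed'' condition of Proposition \ref{prop:coordinate BL} and the concrete ``isolated point of $K_X$'' condition requires identifying $x^{*d} \cap B_{X^*}$ with the weak$^*$-closed set $\{y^* \in K_X : y^* \perp x^*\}$ and noting this equals $K_X \setminus A$ precisely when $K_X$ is $1$-norming for an AM-renorming — this is where the AM-hypothesis is genuinely used and where the equivalence would fail in general (as Example \ref{rem:counterexample-question3} shows).
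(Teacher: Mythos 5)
There is a genuine gap at the heart of your (i)$\Rightarrow$(iv). If $x_0$ is the atom with coordinate functional $x^*$, then $Jx_0$ is \emph{not} $\chi_{\{x^*\}}$: by positive homogeneity $Jx_0(\lambda x^*)=\lambda x^*(x_0)=\lambda$, so $Jx_0$ is nonzero on the whole open ray $\{\lambda x^*:\lambda>0\}$, and that ray accumulates at $x^*$ inside $K_X$. Consequently $x^*$ is \emph{not} an isolated point of $K_X$, and taking $K:=K_X$, $t:=x^*$ does not satisfy (iv) (there is no isolated point and no characteristic function $1_{\{t\}}$). What your observation actually yields is that $\{Jx_0>0\}=\{\lambda x^*:0<\lambda\le 1\}$ is relatively $w^*$-open, i.e.\ condition (ii). To get (iv) one must collapse the ray: the paper sets $L:=K_X\setminus\{\lambda x^*:\lambda>0\}$, forms the compact space $K:=L\cup\{x^*\}$, and proves that the restriction map from $C_{ph}(K_X)$ onto a suitable sublattice of $C(K)$ is a surjective lattice isometry (surjectivity requiring the positively homogeneous extension of a function on $L\cup\{x^*\}$ to the whole ray, glued continuously along $L\cap\{\lambda x^*:\lambda\in[0,1]\}=\{0\}$). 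This construction is the substantive content of the implication and is entirely missing from your argument.

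Two further points. First, in (ii)$\Rightarrow$(i) you identify $x^{*d}\cap B_{X^*}$ with $\{y^*\in K_X:y^*\perp x^*\}$; this is false — $x^{*d}$ is a band in $X^*$ and contains far more than lattice homomorphisms (for $X=C(K)$ and $x^*=\delta_t$ it is all measures with $\mu(\{t\})=0$, not the $w^*$-closed span of the other point masses). The implication can be salvaged more directly: relative openness of the ray lets you exhibit the element $f\in C_{ph}(K_X)$ equal to $\lambda$ at $\lambda x^*$ and $0$ on $L$, which is an atom with $x^*(f)=1>0$, and then Proposition \ref{prop:coordinate} applies (this is in effect what the paper's route (ii)$\Rightarrow$(iv)$\Rightarrow$(i) does). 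Second, the difficulty you flag about nets converging to $0$ in (iii)$\Rightarrow$(ii) is self-inflicted: condition (ii) removes only the \emph{open} ray $\{\lambda x^*:\lambda>0\}$, so $0$ remains in the set whose closedness is asserted and there is nothing to rule out; with the correct set, the paper's normalization argument (norms bounded below near a nonzero multiple of $x^*$) closes the implication cleanly.
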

\begin{proof}
(i)$\Rightarrow$(ii) follows from Proposition \ref{prop:coordinate BL}. (ii)$\Rightarrow$(iii) is straightforward. (iv)$\Rightarrow$(i) is obvious.

(iii)$\Rightarrow$(ii): Assume that there is $\theta\in(0,1]$ and a net $(x^*_\alpha)_\alpha \subseteq K_X\backslash\{\lambda x^*\::\: \lambda>0\}$ such that $x^*_\alpha$ weak* converges to $\theta x^*$. Without loss of generality, we may assume that the net $(\|x^*_\alpha\|)_\alpha$ converges to some scalar $\mu\in [\lambda,1]$. Therefore, the normalized net $(x^*_\alpha/\|x^*_\alpha\|)_\alpha\subseteq K_X\cap S_{X^*}\backslash\{\lambda x^*\::\: \lambda>0\}$ weak* converges to $\frac{\theta}{\mu}x^*$, which contradicts (iii).\medskip

(ii)$\Rightarrow$(iv): By Proposition \ref{prop:norming} we may assume that $X=C_{ph}(K_{X})$. Let $L:=K_X\backslash\{\lambda x^*\::\: \lambda>0\}$, which is closed by assumption, and let $K:=L\cup \left\{x^*\right\}\subset K_X$, which is therefore a compact Hausdorff space. Let $Y=\left\{f:K\to\R,~ f|_L\in C_{ph}(L)\right\}\subset C(K)$, and let $R:X\to Y$ be the restriction operator, which is easily seen to be a lattice isometry. To show its surjectivity, for $g\in Y$ consider its positively homogeneous extension $f$ defined by
    $$
    f\left(\lambda x^*\right)=\lambda, \quad \text{ for } \lambda\in(0,1).
    $$
Observe that $L$ and $M:=\bigl\{\lambda x^*\::\: \lambda\in[0,1]\bigr\}$ are $w^*$-closed subsets of $K_X$ whose union is $K_X$ and $f$ is $w^*$-continuous on each of them and is well defined in $L\cap M=\{0\}$, so $f$ is $w^*$-continuous on $K_X$. Clearly, $t:=x^*$ fulfills the requirements.\medskip

The last assertion follows immediately from (iii).
\end{proof}

The last condition is not equivalent to the rest, as the following example demonstrates.

\begin{example}
    Let $X=C[0,1]$ endowed with the norm $\|f\|:=\|f\|_{\infty}\vee 2|f(0)|$, which is isometrically isomorphic to the sublattice of $C(K)$, where $K=\{-1\}\cup [0,1]$, of functions $f$ satisfying $f(-1)=2f(0)$. We then have that $2\delta_0\in K_X\cap S_{X^*}$ is an isolated point, despite not being a coordinate functional of an atom.
\end{example}

We also have the following supplement to Proposition \ref{prop:fincodim}. Again, Example \ref{rem:counterexample-question3} shows that the implications (ii)$\Rightarrow$(i) and (iii)$\Rightarrow$(i) are not valid for general Banach lattices.

\begin{prop}\label{prop:fincodimam}
    For an AM-space $X$ the following conditions are equivalent:
    \begin{enumerate}
        \item $X^a$ has codimension $n$ in $X$.
        \item $X$ has exactly $n$ distinct lattice norm-one homomorphisms which are not coordinate functionals of atoms.
        \item The set $S'$ of weak* accumulation points of $S=K_X\cap S_{X^*}$ contains exactly $n$ linearly independent elements.
    \end{enumerate}
\end{prop}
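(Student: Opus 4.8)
The plan is to prove the equivalences by first reducing to the concrete representation $X = C_{ph}(K_X)$ via Proposition \ref{prop:norming}, and then translating each condition into a topological statement about the compact space $S' \subseteq K_X$ of weak* accumulation points of $S = K_X \cap S_{X^*}$. The key structural fact I would establish is the following description of the order continuous part: a function $f \in C_{ph}(K_X)$ lies in $X^a$ if and only if $f$ vanishes on $S'$, equivalently, if and only if for every $\varepsilon > 0$ the set $\{x^* \in S : |f(x^*)| \ge \varepsilon\}$ is finite. Indeed, functions with this property behave exactly like elements of $c_0$ over the ``support'' $S \setminus S'$, and the argument in the proof of (iv)$\Rightarrow$(v) of Theorem \ref{thm:several} (using the renorming $\|\cdot\|_\mu$ of Lemma \ref{lem:renorming & NA} with $\mu = \sum 2^{-n} y_n^*$) shows conversely that any $f$ supported on a set with an accumulation point in $K_X$ fails to generate an order continuous ideal --- so the closed ideal of such $f$ is precisely $X^a$.

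With this identification in hand, (i)$\Leftrightarrow$(iii) follows because the codimension of $X^a$ in $X$ equals the dimension of the space of functions in $C_{ph}(K_X)$ modulo those vanishing on $S'$; since $S'$ is a compact subset of $K_X$ stable under positive scaling (being a weak* closed set of accumulation points of a scaling-stable set, intersected appropriately), the restriction map $f \mapsto f|_{S'}$ realizes $X/X^a$ as $C_{ph}(S')$, whose dimension is $n$ exactly when $S'$ spans an $n$-dimensional subspace of $X^*$, i.e. when $S'$ contains exactly $n$ linearly independent elements. For the implication (i)$\Leftrightarrow$(ii) I would simply invoke Proposition \ref{prop:fincodim}, which already gives that $X^a$ has codimension $n$ if and only if $X$ has exactly $n$ distinct norm-one lattice homomorphisms that are not coordinate functionals of atoms --- this part is not special to AM-spaces and requires no new work beyond citing it.

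The step I expect to be the main obstacle is proving the ``hard direction'' of the support characterization of $X^a$: showing that if $f \in C_{ph}(K_X)$ has $f \in X^a$, then $f$ must vanish on every weak* accumulation point of $S$. The natural approach is contrapositive: if $z^* \in S'$ and $f(z^*) = \theta \neq 0$, pick a sequence (or net, then extract) $(y_n^*) \subseteq S$ of distinct homomorphisms weak* converging to $z^*$ with $y_n^*(f)$ bounded away from $0$; one must then produce, inside the ideal generated by $f$, an order-bounded increasing sequence whose norm does not converge, contradicting order continuity of the norm on $X^a$. Here the delicate point is that $z^*$ itself may or may not be proportional to some $y_n^*$, and one needs the $y_n^*$ to be pairwise non-proportional and non-proportional to $z^*$; this can be arranged since $K_X \cap S_{X^*}$ is weak* metrizable-enough on the relevant countable piece, or more robustly by passing to the renorming $\|\cdot\|_\mu$ with $\mu = \sum_n 2^{-n} y_n^*$ and applying Lemma \ref{lem:renorming & NA} exactly as in Theorem \ref{thm:several}(iv)$\Rightarrow$(v) to derive the contradiction. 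Once the support characterization is secured, the rest is bookkeeping with compact scaling-stable sets and the representation $C_{ph}(\cdot)$.
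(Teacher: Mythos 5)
There is a genuine gap, and it is exactly the one the paper warns about in the sentence immediately preceding this proposition. You dispose of (i)$\Leftrightarrow$(ii) by claiming that Proposition \ref{prop:fincodim} ``already gives that $X^a$ has codimension $n$ \emph{if and only if} $X$ has exactly $n$ distinct norm-one lattice homomorphisms that are not coordinate functionals of atoms'' and that ``this part is not special to AM-spaces.'' Proposition \ref{prop:fincodim} is only the implication (i)$\Rightarrow$(ii), not an equivalence, and the converse is genuinely false for general Banach lattices: Example \ref{rem:counterexample-question3} exhibits an atomless Banach lattice with exactly one norm-one lattice homomorphism (necessarily not a coordinate functional), yet $X^a$ is not of codimension $1$ there. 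So (ii)$\Rightarrow$(i) is precisely where the AM-structure must enter, and your proposal never supplies it. The paper's argument for this direction runs as follows: $X^a$ is an order continuous AM-space, hence by Theorem \ref{thm:several} lattice isometric to $c_0(\Gamma)$ and in particular the closed span of its atoms; the $n$ exceptional homomorphisms vanish on all atoms and therefore on $X^a$, giving $\operatorname{codim} X^a\ge n$; and for the reverse inequality one looks at the quotient $Y=X/X^a$ (again an AM-space), notes that any nonzero lattice homomorphism on $Y$ pulls back under $Q^*$ to a homomorphism vanishing on all atoms, hence a multiple of one of the $n$ exceptional ones, and uses injectivity of $Q^*$ to bound the number of independent homomorphisms on $Y$, hence $\dim Y$, by $n$.

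Your route to (i)$\Leftrightarrow$(iii) through the identification $X^a=\{f\in C_{ph}(K_X): f|_{S'}=0\}$ is workable (the forward inclusion follows since atoms are supported on single rays disjoint from $S'$, and the converse uses that $\{f>\varepsilon\}$ sits inside finitely many rays, each of which corresponds to an atom by Proposition \ref{prop:coordinate functionals are isolated}), but it is considerably heavier than needed and you defer its hard half. The paper instead obtains (ii)$\Leftrightarrow$(iii) in one line: by Proposition \ref{prop:coordinate functionals are isolated}, the nonzero elements of $S'$ are, up to scaling, precisely the lattice homomorphisms that are not coordinate functionals of atoms, and for lattice homomorphisms linear independence coincides with pairwise non-proportionality. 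Even granting your support characterization in full, your proof establishes only (i)$\Rightarrow$(ii) and (i)$\Leftrightarrow$(iii); the cycle does not close.
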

\begin{proof}
    (i)$\Rightarrow$(ii) follows from Proposition \ref{prop:fincodim}.\medskip
    
    (ii)$\Rightarrow$(i): Assume that $x_1^*,\ldots,x_n^*$ are the distinct norm-one lattice homomorphisms which are not coordinate functionals of atoms. Since $X^{a}$ is an order continuous AM-space, by Theorem \ref{thm:several} it is lattice isometric to $c_0(\Gamma)$, so in particular, it is a closed span of its atoms (atoms in $X^a$, hence in $X$). As $x_1^*,\ldots,x_n^*$ vanish on atoms, they vanish on $X^{a}$. This shows that $X^a$ is of co-dimension at least $n$. Let $Y:=X\slash X^a$, and let $Q:X\to Y$ be the quotient map. It is enough to show that $\dim Y\le n$. Since $Y$ is an AM-space, this amounts to showing that there are at most $n$ linearly independent homomorphisms on $Y$. Let $y^{*}$ be a non-zero homomorphism on $Y$. Then, $Q^*y^*=y^{*}\circ Q$ vanishes on all atoms, and so is a positive multiple of $x^*_k$, for some $k=1,\ldots,n$. As $Q^*$ is an injection, the claim follows.\medskip    

    (ii)$\Leftrightarrow$(iii): First note that $S'\subseteq K_X$, as $K_X$ is weak* closed. According to Proposition \ref{prop:coordinate functionals are isolated}, the non-zero elements of $S'$ are precisely the homomorphisms which are not coordinate functionals of atoms (up to scaling).
\end{proof}
\begin{rem}
 Note that in the particular case of $X=C(K)$ the previous proposition shows that $C(K)^a$ has codimension $n$ in $C(K)$ if and only if $K$ has exactly $n$ non-isolated points.  
\end{rem}

\section{Free Banach lattices over Banach spaces}\label{section:Free Banach lattices over Banach spaces}

When it comes to the study of norm-attaining lattice homomorphisms, the class of free Banach lattices is certainly a relevant one. Indeed, the first explicit examples of Banach lattices with lattice homomorphisms not attaining their norm were found within this class \cite[Corollary 5.2]{DMCRARZ}. In this section, we will focus on free Banach lattices generated by Banach spaces, which are free objects in the category of Banach lattices with lattice homomorphisms with respect to the larger category of Banach spaces with bounded operators. Specifically, we will continue the line of research started in \cite{DMCRARZ}, where it was conjectured that the norm-attaining lattice homomorphisms in a free Banach lattice generated by a Banach space $E$ were in correspondence with the norm-attaining functionals in $E$.

\begin{defn}\cite{ART18}
	Let $E$ be a Banach space. The \textit{free Banach lattice generated by} $E$, denoted by $FBL[E]$, is a Banach lattice together with a linear isometric embedding $\delta_E: E \to FBL[E]$ such that for any Banach lattice $X$ and any bounded linear map $T: E \to X$, there exists a unique lattice homomorphism $\widehat{T}: FBL[E] \to X$ such that $\widehat{T} \circ \delta_E = T$ and $\|\widehat{T}\| = \|T\|$.
\end{defn}

For a given $1\leq p\leq \infty$, if in the previous definition we replace Banach lattice by $p$-convex Banach lattice (with convexity constant 1), then we get the free $p$-convex Banach lattice generated by $E$, which is denoted by $FBL^{(p)}[E]$ \cite{JLTTT}. We refer the reader to \cite{OTTT} (and the references therein) for a deep study of this class of Banach lattices. Note that $\infty$-convex Banach lattices are precisely (up to a lattice renorming) the class of AM-spaces. The free $\infty$-convex Banach lattice over a Banach space $E$, $\fbl^{(\infty)}[E]$, coincides with the Banach lattice $C_{ph}(B_{E^*})$ of positively homogeneous weak* continuous functions on $B_{E^*}$ equipped with the uniform norm (\cite[Proposition 2.2]{OTTT}), and, for $1\leq p<\infty$, $\fbl^{(p)}[E]$ can be represented as a (not necessarily closed) sublattice of $C_{ph}(B_{E^*})$. In particular, $\fbl^{(\infty)}[E]$ is an AM-space. \medskip

We now provide an analogue of Proposition \ref{PropNotNA} for free Banach lattices over Banach spaces. Recall that the lattice homomorphisms in $\fbl^{(p)}[E]$ are precisely the extensions to $\fbl^{(p)}[E]$, as lattice homomorphisms, of the functionals $x^*\in E^*$, which will be denoted by $\widehat{x^*}$ (i.e. $\widehat{x^*}\circ \delta_E=X^*$). However, these can also be seen as appropriate multiples of the evaluations on points $x^*\in B_{E^*}$, when viewing $\fbl^{(p)}[E]$ as a sublattice of $C_{ph}(B_{E^*})$.

\begin{cor}\label{cor:FBL}
Let $E$ be a Banach space of $\text{dim}(E)\geq 2$. Given a norm-attaining lattice homomorphism $\widehat{x^*}\in \fbl^{(p)}[E]$ (for any $1\leq p\leq \infty)$, there exists a lattice renorming $|||\cdot |||$ of $\fbl^{(p)}[E]$ in such a way that $\widehat{x^*}$ does not attain its norm.
\end{cor}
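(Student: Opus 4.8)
The plan is to realise the desired renorming in the form $\|\cdot\|_\mu$ of Lemma~\ref{lem:renorming & NA}, for a positive functional $\mu$ on $\fbl^{(p)}[E]$ which is disjoint from $\widehat{x^*}$: then part (3) of that lemma forces any vector at which $\widehat{x^*}$ could attain its $\|\cdot\|_\mu$-norm to lie in $\ker\mu$, and a weak$^*$-continuity argument in the representation $\fbl^{(p)}[E]\subseteq C_{ph}(B_{E^*})$ will rule this out.

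First I would dispose of trivialities: if $x^*=0$ there is nothing to prove, so assume $x^*\neq 0$ and, after rescaling, $\|x^*\|_{E^*}=1$. Under the representation $\fbl^{(p)}[E]\subseteq C_{ph}(B_{E^*})$ we then have $\widehat{x^*}=\left.\delta_{x^*}\right|_{\fbl^{(p)}[E]}$ and $\|\widehat{x^*}\|=1$. Since $\dim E\geq 2$ we have $\dim E^*\geq 2$, so we may pick $z^*\in E^*$ linearly independent from $x^*$ and set
$$
y_n^*:=\frac{x^*+\tfrac1n z^*}{\bigl\|x^*+\tfrac1n z^*\bigr\|}\in S_{E^*},\qquad n\in\N.
$$
These are well defined, $y_n^*\to x^*$ in norm (hence in the weak$^*$ topology), and no $y_n^*$ is a scalar multiple of $x^*$. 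Consequently each $\widehat{y_n^*}$ is a lattice homomorphism on $\fbl^{(p)}[E]$ which is not proportional to $\widehat{x^*}$, hence disjoint from it. Since a disjoint complement is a norm-closed band in $\fbl^{(p)}[E]^*$ and $\|\widehat{y_n^*}\|=\|y_n^*\|_{E^*}=1$, the norm-convergent series
$$
\mu:=\sum_{n=1}^\infty \frac{1}{2^n}\,\widehat{y_n^*}
$$
defines a positive functional on $\fbl^{(p)}[E]$ with $\|\mu\|\leq 1$ and $\mu\perp\widehat{x^*}$; moreover $\mu(|f|)=\sum_{n}2^{-n}|f(y_n^*)|$ for every $f$, since $\widehat{y_n^*}$ is evaluation at $y_n^*$ and the lattice operations of $C_{ph}(B_{E^*})$ are pointwise.

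Now I would apply Lemma~\ref{lem:renorming & NA}(1) and (3) to this $\mu$. Part (1) gives that
$$
|||f|||:=\|f\|_\mu=\|f\|_{\fbl^{(p)}[E]}+\sum_{n=1}^\infty \frac{1}{2^n}\,|f(y_n^*)|
$$
is an equivalent lattice norm on $\fbl^{(p)}[E]$; part (3), using $\mu\perp\widehat{x^*}$, gives $\|\widehat{x^*}\|_\mu=\|\widehat{x^*}\|=1$, and that if $\widehat{x^*}$ attained its $|||\cdot|||$-norm it would do so at some $f_0\in\fbl^{(p)}[E]_+$ with $|||f_0|||=1$, $\widehat{x^*}(f_0)=1$ and $\mu(f_0)=0$. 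Positivity of $f_0$ together with $\mu(f_0)=0$ forces $f_0(y_n^*)=0$ for every $n$; but $f_0\in C_{ph}(B_{E^*})$ is weak$^*$-continuous and $y_n^*\to x^*$ weak$^*$, so $f_0(x^*)=\lim_n f_0(y_n^*)=0$, contradicting $f_0(x^*)=\widehat{x^*}(f_0)=1$. Hence $\widehat{x^*}$ does not attain its $|||\cdot|||$-norm. (Note the argument uses nothing about $\widehat{x^*}$ beyond $x^*\neq 0$; in particular the hypothesis that $\widehat{x^*}$ attain its norm for the original norm is not needed.)

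I do not expect a serious obstacle here, as the proof is essentially bookkeeping around the representation $\fbl^{(p)}[E]\subseteq C_{ph}(B_{E^*})$ plus one invocation of Lemma~\ref{lem:renorming & NA}. The two points that need care are: (i) producing a sequence of lattice homomorphisms converging weak$^*$ to $\widehat{x^*}$ that are genuinely distinct from it — this is exactly where $\dim E\geq 2$ enters, through the choice of $z^*$ (and, correspondingly, why the statement fails for $\dim E=1$, where $\fbl^{(p)}[\R]\cong\R^2$); and (ii) upgrading the pairwise disjointness of the $\widehat{y_n^*}$ from $\widehat{x^*}$ to disjointness of the \emph{infinite sum} $\mu$ from $\widehat{x^*}$, which is handled by the fact that disjoint complements in $\fbl^{(p)}[E]^*$ are norm-closed bands.
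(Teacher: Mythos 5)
Your proposal is correct and follows essentially the same route as the paper: a sequence of norm-one functionals non-proportional to $x^*$ converging to it in norm, the perturbation $\mu=\sum_n 2^{-n}\widehat{y_n^*}$ disjoint from $\widehat{x^*}$, Lemma \ref{lem:renorming & NA}(3) to force $\mu(f_0)=0$ at any norm-attaining positive vector, and weak$^*$-continuity of elements of $\fbl^{(p)}[E]$ on $B_{E^*}$ to derive the contradiction. Your explicit construction of the $y_n^*$ via a linearly independent $z^*$, and your remark that the norm-attainment hypothesis on $\widehat{x^*}$ is not actually used, are both accurate but do not change the substance of the argument.
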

\begin{proof}
We may assume that $\|\widehat{x^*}\|=1$. Let $(x_n^*)_{n=1}^\infty\subset S_{E^*}$ be a sequence which converges to $x^*$ in $E^*$ such that $x_n^*$ is not proportional to $x^*$ for any $n\in\mathbb{N}$, and define $\mu:=\sum_{n=1}^\infty \frac{1}{2^n} \widehat{x_n^*}$. Note that since $\widehat{x_n^*} \perp \widehat{x^*}$ for all $n$, it follows that $\mu$ and $x^*$ are disjoint. On the other hand, $(x_n^*)_{n=1}^\infty$ converges to $x^*$ in the weak$^*$ topology of $B_{E^*}$, and elements of $\fbl^{(p)}[E]$ are weak$^*$-continuous on $B_{E^*}$. The proof is completed in a way similar to (iv)$\Rightarrow$(v) of Theorem \ref{thm:several}.
\end{proof}

It was conjectured in \cite[Conjecture 5.5]{DMCRARZ} that a functional $x^* \in E^*$ attains its norm if and only if the lattice homomorphism  $\widehat{x^*} \in \FBL[E]^*$ attains its norm. We have shown in Theorem \ref{THMAM} that every lattice homomorphism in $\FBL^{(\infty)}[E]^*$  attains its norm. Thus, the analogous conjecture turns out to be false in the setting of free $\infty$-convex Banach lattices generated by a Banach space. Nevertheless, we do not know the answer for free $p$-convex Banach lattices with $p \in [1,\infty)$:

\begin{question}
\label{QuestionFBL}
Let $E$ be a Banach space and $p \in [1,\infty)$. Does a functional $x^* \in E^*$ attain its norm if and only if the lattice homomorphism  $\widehat{x^*} \in \FBL^{(p)}[E]^*$ attains its norm?
\end{question}

In \cite[Definition 5.7]{DMCRARZ}, the authors introduced the following property: A Banach space $E$ has \textit{property (P)} if for every $x^*\notin \text{NA}(E,\mathbb{R})$, the set
$$
C:=\bigl\{y^*\in E^*\::\: |x^*(x)|+|y^*(x)|\leq \|x^*\| \text{ for every } x\in B_E\bigr\}
$$
satisfies that $x^*$ is in the $w^*$-closure of $\mathbb{R}^+C:=\{\lambda y^*\::\: \lambda>0,\:y^*\in C\}$. It is claimed in \cite[Lemma 5.8]{DMCRARZ} that Banach spaces with property (P) satisfy the above conjecture. However, there is a gap in the proof of this fact: at some point, the authors use the weak$^*$-continuity on bounded sets of the functions of $\fbl[E]$ and assume that this is sufficient to ensure that the elements of $\fbl[E]$ are weak$^*$-continuous on $\overline{\mathbb{R}^+ C}^{w^*}$. Since this set is not bounded in general, the argument is not valid. \medskip

For this reason, we propose the following slight modification and generalization of property (P): For $1\leq p<\infty$, a Banach space has \textit{property ($P_p$)} if for every $x^*\notin \text{NA}(E,\mathbb{R})$, the set
$$
C_p:=\bigl\{y^*\in E^*\::\: |x^*(x)|^p+|y^*(x)|^p\leq \|x^*\|^p \text{ for every } x\in B_E\bigr\}
$$
satisfies that $x^*$ is in $\bigcup_{n\in\mathbb{N}} \overline{(\mathbb{R}^+ C)\cap nB_{E^*}}^{w^*}$. It is clear that if $p\leq q$ then a Banach space with property ($P_p$), also has property ($P_q$).

\begin{prop}
Given $1\leq p<\infty$, let $E$ be a Banach space with property ($P_p$). Then, $x^*\in \text{NA}(E,\mathbb{R})$ if and only if $\widehat{x^*}\in\text{NA}(\fbl^{(p)}[E],\mathbb{R})$.
\end{prop}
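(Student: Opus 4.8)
The plan is to prove the two implications separately, with essentially all the work concentrated in the converse. For the direction $x^*\in\text{NA}(E,\mathbb{R})\Rightarrow\widehat{x^*}\in\text{NA}(\fbl^{(p)}[E],\mathbb{R})$, I would only use that $\delta_E\colon E\to\fbl^{(p)}[E]$ is an isometric embedding and that the universal property yields $\|\widehat{x^*}\|=\|x^*\|$: if $x^*$ attains its norm at $x_0\in B_E$, then $\delta_E(x_0)\in B_{\fbl^{(p)}[E]}$ and $\widehat{x^*}(\delta_E(x_0))=x^*(x_0)=\|x^*\|=\|\widehat{x^*}\|$. Property $(P_p)$ plays no role here.

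For the converse I would argue by contraposition. Normalize $\|x^*\|=1$ (so $\|\widehat{x^*}\|=1$), assume $x^*\notin\text{NA}(E,\mathbb{R})$, and suppose for contradiction that $\widehat{x^*}$ attains its norm. Since $\widehat{x^*}$ is a positive lattice homomorphism with $\widehat{x^*}(|f|)=|\widehat{x^*}(f)|$, it attains its norm at some $f\in B_{\fbl^{(p)}[E]}$ with $f\geq 0$; identifying $\fbl^{(p)}[E]$ with a sublattice of $C_{ph}(B_{E^*})$, the functional $\widehat{x^*}$ becomes the evaluation at $x^*\in S_{E^*}$, so $f(x^*)=1$. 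The heart of the matter is then to show that $f$ annihilates $\mathbb{R}^+C_p$ on each ball $nB_{E^*}$. Using that $\bigl(\sum_{k}|h(y^*_k)|^p\bigr)^{1/p}\le\|h\|_{\fbl^{(p)}[E]}$ whenever $\sup_{x\in B_E}\bigl(\sum_{k}|y^*_k(x)|^p\bigr)^{1/p}\le 1$, applied to the pair $\{x^*,y^*\}$ with $y^*\in C_p$ --- for which $\sup_{x\in B_E}(|x^*(x)|^p+|y^*(x)|^p)^{1/p}\le\|x^*\|=1$ holds by the very definition of $C_p$ --- one obtains $|f(x^*)|^p+|f(y^*)|^p\le\|f\|^p\le 1$, and since $f(x^*)=1$ this forces $f(y^*)=0$. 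Since $C_p$ is positively balanced and $f$ is positively homogeneous, $f$ vanishes on $(\mathbb{R}^+C_p)\cap B_{E^*}$; rescaling, for each $n\in\mathbb{N}$ the weak$^*$-continuous function $g_n(z^*):=n\,f(z^*/n)$ on $nB_{E^*}$ extends $f$ and vanishes on $(\mathbb{R}^+C_p)\cap nB_{E^*}$.

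To conclude, I would invoke property $(P_p)$ to obtain $n\in\mathbb{N}$ with $x^*\in\overline{(\mathbb{R}^+C_p)\cap nB_{E^*}}^{\,w^*}$; since $nB_{E^*}$ is weak$^*$-compact and $g_n$ is weak$^*$-continuous on it and vanishes on the set $(\mathbb{R}^+C_p)\cap nB_{E^*}$, which is dense in its own weak$^*$-closure, $g_n$ must vanish at $x^*$, giving $f(x^*)=g_n(x^*)=0$ against $f(x^*)=1$. The hard part --- and precisely the gap in the proof of \cite[Lemma 5.8]{DMCRARZ} --- is this continuity step: functions in $\fbl^{(p)}[E]$ are weak$^*$-continuous on bounded sets but not, in general, on the possibly unbounded set $\overline{\mathbb{R}^+C}^{\,w^*}$. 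The purpose of the reformulation $(P_p)$, with the intersection $\cap\,nB_{E^*}$, is exactly to confine this step to a single weak$^*$-compact ball, where positive homogeneity lets one transfer $f$; the remaining steps are routine bookkeeping with the $\fbl^{(p)}$-norm formula and positive homogeneity.
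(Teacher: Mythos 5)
Your proposal is correct and follows essentially the same approach as the paper, which simply cites the argument of \cite[Lemma 5.8]{DMCRARZ} and notes that the modified definition of $(P_p)$ repairs the gap; your write-up supplies exactly the details the paper leaves implicit, including the key repair step of confining the continuity argument to the weak$^*$-compact ball $nB_{E^*}$ via the rescaled function $g_n$. All the individual steps (the norm formula applied to the pair $\{x^*,y^*\}$, the inclusion $C_p\subseteq B_{E^*}$, and the positive homogeneity transfer) check out.
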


\begin{proof}
The proof follows the same argument as in \cite[Lemma 5.8]{DMCRARZ}, despite the fact that there was a gap using the original definition of property ($P$).
\end{proof}

Fortunately, the classes of Banach spaces provided in \cite{DMCRARZ} satisfying property ($P$) also satisfy ($P_p$) for every $1\leq p<\infty$. More specifically, we have:
\begin{itemize}
    \item $E=c_0$ has property ($P_p$) with $x^*\in \overline{(\mathbb{R}^+ C_p)\cap 2 B_{E^*}}^{w^*}$ for every $x^*\notin \text{NA}(E,\mathbb{R})$.
    \item $E=\ell_1(\Gamma)$ has property ($P_p$) with $x^*\in \overline{(\mathbb{R}^+ C_p)\cap B_{E^*}}^{w^*}$ for every $x^*\notin \text{NA}(E,\mathbb{R})$.
    
\end{itemize}

For the first claim, see the proof of \cite[Theorem 5.10]{DMCRARZ}; the second claim is a particular case of the following result.

\begin{prop}
If $\mu$ is a localizable measure (i.e.~such that $L_1(\mu)^*=L_\infty(\mu)$), then $L_1(\mu)$ has property ($P_p$). 
\end{prop}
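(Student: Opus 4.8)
The plan is to pass through the concrete identification $E = L_1(\mu)$, $E^* = L_\infty(\mu)$ --- which is precisely what localizability gives us --- and then to write down the approximating functionals by hand. So fix $g \in L_\infty(\mu) \setminus \NA(L_1(\mu),\R)$; after scaling we may assume $\|g\|_\infty = 1$. The first step is the remark that $g$ not attaining its norm forces $\mu(\{|g|=1\}) = 0$: a localizable measure is semifinite, so if $\mu(\{|g|=1\}) > 0$ we could pick $F \subseteq \{|g|=1\}$ with $0 < \mu(F) < \infty$, and then $f := \sgn(g)\,\mathbf{1}_F/\mu(F) \in B_{L_1(\mu)}$ satisfies $\int fg\,d\mu = 1 = \|g\|_\infty$, contradicting $g \notin \NA(L_1(\mu),\R)$.

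The second step identifies what we need of $C_p$. One checks that $h \in C_p$ if and only if $|g|^p + |h|^p \le 1$ $\mu$-a.e.; this amounts to computing the norm of the operator $f \mapsto \bigl(\int fg\,d\mu, \int fh\,d\mu\bigr)$ from $L_1(\mu)$ into $\R^2$ equipped with the $\ell_p$-norm, and it is in the (routine) verification that this norm equals the essential supremum of $\omega \mapsto (|g(\omega)|^p + |h(\omega)|^p)^{1/p}$ that semifiniteness is used once more. For the construction below only the easy implication is needed: if $|g|^p + |h|^p \le 1$ a.e., then for every $f \in B_{L_1(\mu)}$ and every pair $(s,t)$ with $|s|^{p'} + |t|^{p'} \le 1$ (where $1/p + 1/p' = 1$, the condition read as $\max(|s|,|t|) \le 1$ when $p = 1$), the pointwise H\"older inequality in $\R^2$ gives $\|sg + th\|_\infty \le 1$, hence $\abs{s\int fg\,d\mu + t\int fh\,d\mu} \le 1$; taking the supremum over such $(s,t)$ yields $\abs{\int fg\,d\mu}^p + \abs{\int fh\,d\mu}^p \le 1$, i.e.\ $h \in C_p$.

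Now the construction. For $\delta \in (0,1)$ put $A_\delta := \{|g| \le 1-\delta\}$, $\lambda_\delta := \bigl((1-\delta)^{-p}-1\bigr)^{-1/p} > 0$, and $h_\delta := \mathbf{1}_{A_\delta}\,g \in L_\infty(\mu)$. A direct check gives $\lambda_\delta^{-1} h_\delta \in C_p$: off $A_\delta$ one simply has $|g|^p \le 1$, while on $A_\delta$ one has $|g|^p\bigl(1 + \lambda_\delta^{-p}\bigr) \le (1-\delta)^p(1-\delta)^{-p} = 1$. Hence $h_\delta = \lambda_\delta\bigl(\lambda_\delta^{-1} h_\delta\bigr) \in \R^+ C_p$, and since also $\|h_\delta\|_\infty \le \|g\|_\infty = 1$, every $h_\delta$ lies in $(\R^+ C_p) \cap B_{E^*}$. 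Finally, as $\delta \downarrow 0$ the functions $\mathbf{1}_{\{|g| > 1-\delta\}}$ decrease $\mu$-a.e.\ to $\mathbf{1}_{\{|g| \ge 1\}} = 0$ (here we use $\mu(\{|g|=1\}) = 0$), so by dominated convergence $\int (g - h_\delta) f\,d\mu = \int_{\{|g|>1-\delta\}} g f\,d\mu \to 0$ for every $f \in L_1(\mu)$; therefore $g$ belongs to the weak$^*$ closure of $\{h_\delta : 0 < \delta < 1\} \subseteq (\R^+ C_p) \cap B_{E^*}$. Thus $g \in \overline{(\R^+ C_p) \cap B_{E^*}}^{w^*}$, which is property $(P_p)$ --- in fact realized uniformly with $n = 1$, recovering the $\ell_1(\Gamma)$ case quoted above.

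There is no genuine obstacle here; the two points that deserve attention are (i) that the identification $E^* = L_\infty(\mu)$ and the passage from $g \notin \NA(L_1(\mu),\R)$ to $\mu(\{|g|=1\}) = 0$ really do rely on localizability (equivalently, semifiniteness of $\mu$), and (ii) arranging that the approximating net $(h_\delta)$ stays inside $B_{E^*}$ while still converging weak$^*$ to $g$, which is exactly what renders the union over $n$ in the definition of property $(P_p)$ unnecessary in this case. The full description of $C_p$ as $\{|g|^p + |h|^p \le 1 \ \mu\text{-a.e.}\}$ is the most computational ingredient, but only its trivial half is actually used.
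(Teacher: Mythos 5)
Your proof is correct and follows essentially the same route as the paper's: the observation that $\mu(\{|g|=1\})=0$, the approximating functionals $g\,\mathbf{1}_{\{|g|\le 1-\delta\}}$ (the paper's $ny_n^*$ with $\delta=1/n$), and weak$^*$ convergence via dominated convergence are all identical. The only cosmetic difference is that you verify membership in $\mathbb{R}^+C_p$ directly for general $p$ (with the explicit constant $\lambda_\delta$), whereas the paper treats $p=1$ and invokes the implication $(P_1)\Rightarrow(P_p)$.
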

\begin{proof}
Let $x^*$ be a norm-one linear functional on $L_1(\mu)$ which does not attain its norm. Let us denote by $g$ the function in $B_{L_\infty(\mu)}$ such that $x^*(f)=\int f gd\mu$ for every $f\in L_1(\mu)$: we will identify $x^*=g$. First, note that $A=\{|g|=1\}$ is a $\mu$-null set: if not, take $B\subset A$ of finite measure and define $f=\frac{\text{sgn}(g)}{\mu(B)}\chi_B$, which is a norm-one function in $L_1(\mu)$ such that $x^*(f)=1$ and this would contradict the fact that $x^*\notin\text{NA}(L_1(\mu),\mathbb{R})$. Hence the decreasing sequence of measurable sets defined by $A_n=\{1\geq |g|>1-\frac{1}{n}\}$ has $\mu$-null intersection.\medskip

For every $n\in\mathbb{N}$, we define $y_n^*:=\frac{1}{n}g\cdot\chi_{A_n^c}$; note that $0\leq |g|\cdot\chi_{A_n^c}\leq |g|\in B_{L_\infty(\mu)}$, and so $ny_n^*\in B_{L_\infty(\mu)}$. We have
$$
|x^*|+|y_n^*|=|g|\chi_{A_n}+|g|\chi_{A_n^c}+\frac{1}{n}|g|\chi_{{A_n^c}}=|g|\chi_{A_n}+\left(1+\frac{1}{n}\right)|g|\chi_{A_n^c}\leq 1, \text{ $\mu$-a.e.}
$$
Thus, $(y_n^*)_{n=1}^\infty$ is a sequence in the set
$$
C_1=\bigl\{y^*\in L_1(\mu)^*\::\: |x^*(f)|+|y^*(f)|\leq 1 \text{ for every } f\in B_{L_1(\mu)}\bigr\}.
$$
Let us show that $\mathbb{R}^+ C_1\cap B_{L_1(\mu)^*}\ni ny^*_n\overset{w^*}{\to} x^*$. Indeed, for every $f\in L_1(\mu)$ we have that $fg\in L_1(\mu)$, and so

$$(x^*-ny_n^*)(f)=\int (g-g\cdot\chi_{A_n^c})fd\mu=\int_{A_n}gfd\mu\to 0,~ n\to\infty.$$
This shows that $L_1(\mu)$ has property $(P_1)$ and consequently ($P_p$) for every $1\leq p<\infty$.
\end{proof}

We now present an example of a Banach lattice $X$ such that $\text{Hom}(X,\mathbb{R})\subset \text{NA}(X,\mathbb{R})$, but it is neither order continuous nor an AM-space.

\begin{example}
Consider the free Banach lattice of a reflexive Banach space $E$ with $\dim E=\infty$. This space is not $\sigma$-order complete (see \cite[Proposition 2.11]{OTTT}), hence cannot be order continuous. Moreover, $\text{FBL}[E]$ is at most $2$-convex (see \cite[Proposition 9.30]{OTTT}), therefore cannot be $\infty$-convex, and so it is not lattice isomorphic to an AM-space.

Let $\widehat{T}:\fbl[E]\to Y$ be a compact lattice homomorphism. Note that $\widehat{T}$ is the unique lattice extension of the operator $T: E\to Y$ defined by $T:=\widehat{T}\,\delta_E$ and $\|T\|=\|\widehat{T}\|$. Since $E$ is reflexive and $T$ is compact, then $T$ attains its norm at some element $x\in B_E$, thus $\widehat{T}$ attains its norm at $\delta_x$. 
\end{example}

We end this section with an alternative proof of Theorem \ref{THMAM} which, chronologically, was the first one to be developed. It uses some advanced tools from free Banach lattices, as opposed to the more direct version presented in section \ref{sec:AMspaces}. However, we find it interesting enough to include it, as it shows how free Banach lattices can sometimes enlighten the way for solving general Banach lattice questions. We need first the following auxiliary lemma.

\begin{lem}
\label{LemmaFBL}
Let $1\leq p\leq \infty$ and let $X$ be a $p$-convex Banach lattice with $p$-convexity constant 1. If every lattice homomorphism of $\fbl^{(p)}[X]$ is norm-attaining, then so is every lattice homormorphism of $X$.
\end{lem}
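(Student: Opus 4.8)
The plan is to transfer a norm-attaining vector from $\fbl^{(p)}[X]$ back to $X$ via the universal property. Let $x^*\in\Hom(X,\R)$ with $\|x^*\|=1$; we must produce $x\in B_{X_+}$ with $x^*(x)=1$. The key observation is that $x^*$ itself, as a bounded operator $X\to\R$ (and $\R$ is a $p$-convex Banach lattice with constant $1$), factors through the embedding $\delta_X\colon X\to\fbl^{(p)}[X]$: there is a lattice homomorphism $\widehat{x^*}\colon\fbl^{(p)}[X]\to\R$ with $\widehat{x^*}\circ\delta_X=x^*$ and $\|\widehat{x^*}\|=\|x^*\|=1$. By hypothesis $\widehat{x^*}$ attains its norm, say at some $f\in B_{\fbl^{(p)}[X]}$; replacing $f$ by $|f|$ (which does not decrease $\widehat{x^*}(f)$ since $\widehat{x^*}$ is a positive functional, and does not increase the norm) we may take $f\in B_{\fbl^{(p)}[X]_+}$ with $\widehat{x^*}(f)=1$.

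The difficulty is that $f$ need not lie in the range of $\delta_X$, so we cannot directly pull it back to $X$. The remedy is to use Proposition \ref{prop:characterizationLatNA}: it suffices to build an \emph{increasing} sequence $(x_n)$ in $B_{X_+}$ with $x^*(x_n)\to 1$. To obtain such a sequence, I would approximate $f$ in $\fbl^{(p)}[X]$ by elements of the sublattice generated by $\delta_X(X)$. Concretely, the lattice ideal (or sublattice) generated by $\{\delta_X(x):x\in X\}$ is dense in $\fbl^{(p)}[X]$, so choose $g_n$ in this sublattice with $\|g_n-f\|\le 1/n$; then $\widehat{x^*}(g_n)\to 1$ and $\|g_n\|\le 1+1/n$. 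Each $g_n$ is a finite lattice-polynomial expression $g_n=\Phi_n(\delta_X(y_1^{(n)}),\dots,\delta_X(y_{k_n}^{(n)}))$ in finitely many vectors of $X$. Applying $\widehat{x^*}$ (a lattice homomorphism) and using $\widehat{x^*}\circ\delta_X=x^*$, we get $\widehat{x^*}(g_n)=\Phi_n(x^*(y_1^{(n)}),\dots,x^*(y_{k_n}^{(n)}))$; and by the definition of the free $p$-convex Banach lattice norm, the element $h_n:=\Phi_n(|y_1^{(n)}|,\dots)$ built \emph{inside $X$} by the same lattice polynomial on the absolute values satisfies $\|h_n\|_X\le\|g_n\|\cdot(1+\varepsilon)$ — more precisely one uses that the evaluation of the same lattice expression on the generators, computed in $X$ itself, is dominated in norm by the $\fbl^{(p)}$-norm of $g_n$ (this is exactly the content of the universal property applied to $\Id_X$). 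This produces $h_n\in X_+$ with $x^*(h_n)$ close to $1$ and $\|h_n\|$ close to $1$.

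Finally, to make the sequence increasing, set $x_n:=h_1\vee\dots\vee h_n$ normalized appropriately; but the norm of a supremum is not controlled in a general $p$-convex lattice, so instead I would feed the (not necessarily increasing) sequence $(h_n)$ with $x^*(h_n)\to 1$, $\|h_n\|\to 1$ directly into the construction used in the proof of Proposition \ref{prop:characterizationLatNA}: that proof only needs a sequence $(h_n)\subset B_{X_+}$ (after rescaling by $1/\|h_n\|$) with $x^*(h_n)\to 1$, together with replacing suprema by the Cauchy sequence $y_n:=\bigwedge_{k\le n} h_k/x^*(h_k)$, whose norm estimate $\|y_n-y_m\|\le 1/x^*(h_n)-1$ does \emph{not} require any control of suprema. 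Thus $x^*$ attains its norm. The main obstacle, and the step deserving the most care, is the passage from the finite lattice polynomial $g_n$ in $\fbl^{(p)}[X]$ to the corresponding element $h_n$ of $X$ with the norm bound $\|h_n\|_X\lesssim\|g_n\|_{\fbl^{(p)}[X]}$; this is precisely where one invokes that $\fbl^{(p)}[X]$ is \emph{free}, i.e. that the identity $X\to X$ extends to a lattice homomorphism $\fbl^{(p)}[X]\to X$ of norm $1$, which sends $g_n\mapsto h_n$.
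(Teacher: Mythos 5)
Your proof has a genuine gap at the final step. After producing the sequence $(h_n)\subset X_+$ with $\|h_n\|\to 1$ and $x^*(h_n)\to 1$, you claim that the proof of Proposition \ref{prop:characterizationLatNA} goes through without the monotonicity hypothesis. It does not: the Cauchy estimate there passes from $\bigl(\tfrac{x_n}{x^*(x_n)}-\bigwedge_{k=n+1}^m x_k\bigr)^+$ to $\bigl(\tfrac{x_{n+1}}{x^*(x_n)}-x_{n+1}\bigr)^+$ precisely by using $x_n\le x_{n+1}$ and $\bigwedge_{k=n+1}^m x_k=x_{n+1}$, i.e.\ monotonicity is used twice at the decisive step. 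Without it there is no control on $y_n-y_m$ (if the $h_k$ were, say, pairwise disjoint, the infima would collapse to $0$). Indeed, the statement you reduce to --- ``there is a sequence in $B_{X_+}$ on which $x^*$ tends to $\|x^*\|$'' --- holds for \emph{every} norm-one positive functional and cannot by itself imply norm attainment. A secondary slip: your $h_n:=\Phi_n(|y_1^{(n)}|,\dots)$ does not satisfy $x^*(h_n)=\widehat{x^*}(g_n)$, since $\Phi_n$ evaluated on absolute values is not $\Phi_n$ evaluated on the $y_i^{(n)}$ themselves.

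The repair is already contained in your last sentence, and it makes the entire approximation argument unnecessary: this is the paper's proof. The universal property applied to $\Id_X$ gives a lattice homomorphism $\beta\colon \fbl^{(p)}[X]\to X$ with $\beta\circ\delta_X=\Id_X$ and $\|\beta\|=1$. Since $\widehat{x^*}$ and $x^*\circ\beta$ are lattice homomorphisms agreeing on $\delta_X(X)$, uniqueness of extensions gives $\widehat{x^*}=x^*\circ\beta$. Hence if $\widehat{x^*}(f)=1$ with $f\in B_{\fbl^{(p)}[X]}$, then $\beta f\in B_X$ and $x^*(\beta f)=1$, and you are done in one line --- no lattice polynomials, no density, no Proposition \ref{prop:characterizationLatNA}. (Equivalently, within your own scheme: $h_n=\beta g_n\to\beta f$ in norm because $\beta$ is continuous, so the limit itself is the norm-attaining vector; you never needed to extract a Cauchy sequence by hand.)
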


\begin{proof}
Let $x^*\colon X\to\mathbb{R}$ be a lattice homomorphism such that $\|x^*\|=1$. If we denote by $\delta_X$ the canonical isometric embedding of $X$ into $\fbl^{(p)}[X]$, the universal property of $\fbl^{(p)}[X]$ ensures the existence of a unique lattice homomorphism $\beta\colon \fbl^{(p)}[X]\to X$ satisfying $\beta\delta_X=\text{id}_X$ and such that $\|\beta\|=1$. \medskip

If every lattice homomorphism of $\fbl^{(p)}[X]$ is norm-attaining, then, in particular, there exists $f\in B_{\fbl^{(p)}[X]}$ such that $\widehat{x^*}(f)=1$. Now, consider $x^*\circ \beta$, which is a lattice homomorphism of $\fbl^{(p)}[X]$. Since $\widehat{x^*}$ and $x^*\circ \beta$ are both lattice homomorphisms of $\fbl^{(p)}[X]$ which agree on $\delta_X(X)$, then, in fact, $\widehat{x^*}=x^*\circ\beta$. In particular,
$$
1=\widehat{x^*}(f)=(x^*\circ \beta)(f)=x^*(\beta f),
$$
and, as $\beta f\in B_X$, this shows that $x^*$ is norm-attaining.
\end{proof}

\begin{proof}[Alternative proof of Theorem \ref{THMAM}]
    To show that every lattice homomorphism on an AM-space attains its norm, we will check the following statements:
    \begin{enumerate}[(1)]
    \item Every lattice homomorphism of $\fbl^{(\infty)}[\ell_2]$ attains its norm.
    \item Given any separable Banach space $E$, every lattice homomorphism of $\fbl^{(\infty)}[\fbl^{(\infty)}[E]]$ attains its norm.
    \item Given any separable Banach space $E$, every lattice homomorphism of $\fbl^{(\infty)}[E]$ attains its norm.
    \item Given any Banach space $E$, every lattice homomorphism of $\fbl^{(\infty)}[E]$ attains its norm.
    \item Every lattice homomorphism of an AM-space attains its norm.
\end{enumerate}

\smallskip For $(1)$, observe that since $\ell_2$ is reflexive then, by James' theorem, all its functionals are norm-attaining. As the lattice homomorphisms of $\fbl^{(\infty)}[\ell_2]$ are of the form $\widehat{x^*}$, for some $x^*\in \ell_2$ (and are defined by $\widehat{x^*}(f)=f(x^*)$), this shows that every lattice homomorphism of $\fbl^{(\infty)}[\ell_2]$ attains its norm. \smallskip

$(2)$: If $E$ is a separable Banach space, then $\fbl^{(\infty)}[E]$ is a separable AM-space. By \cite[Theorem 2]{LL}, $\fbl^{(\infty)}[E]$ has a monotone Schauder basis, so by \cite[Theorem 10.24]{OTTT} $\fbl^{(\infty)}[\fbl^{(\infty)}[E]]$ is lattice isometric to $\fbl^{(\infty)}[\ell_2]$ and the conclusion follows from $(1)$. \smallskip

$(3)$: Since  $\fbl^{(\infty)}[E]$ is an AM-space ($\infty$-convex with constant 1), this implication is an immediate consequence of Lemma \ref{LemmaFBL} and $(2)$. \smallskip

$(4)$: Let $E$ be an arbitrary Banach space. Let $\widehat{x^*}\in\text{Hom}(\text{FBL}[E],\mathbb{R})$ such that $\|\widehat{x^*}\|=\|x^*\|=1$. There is a sequence $(x_n)_n\subset B_E$ such that $x^*(x_n)\rightarrow 1$ as $n\to\infty$. We define
$$
F:=\overline{\text{span}}\bigl\{x_n\::\:n\in\mathbb{N}\bigr\},
$$
which is a separable subset of $E$. By \cite[Proposition 3.10]{OTTT}, the mapping $\overline{\iota}:\fbl^{(\infty)}[F]\to \fbl^{(\infty)}[E]$ defined by $\overline{\iota}f=f\circ \iota^*$ for every $f\in\fbl^{(\infty)}[F]$ (where $\iota:F\hookrightarrow E$ is the canonical inclusion of $F$ into $E$ as a subspace) is a lattice isometric embedding. \smallskip

Now observe that $y^*=\left.\widehat{x^*}\right|_{\overline{\iota}(\fbl^{(\infty)}[F])}$ is a
norm-one lattice homomorphism of $\overline{\iota}(\fbl^{(\infty)}[F])$. Indeed, for every $n\in \mathbb N$ we have
$$
y^*(\overline{\iota}\delta_{x_n})=\widehat{x^*}(\overline{\iota}\delta_{x_n})=\widehat{x^*}(\delta_{x_n}\circ \iota^*)=(\delta_{x_n}\circ i^*)(x^*)=\delta_{x_n}(\iota^*x^*)=\iota^*x^*(x_n)=x^*(\iota x_n)=x^*(x_n).
$$
Since $F$ is separable, we know by $(3)$ that every lattice homomorphism of $\fbl^{(\infty)}[F]$ attains its norm, and as $\overline{\iota}$ is a lattice isometry, $y^*$ is norm-attaining. That is, there is $f\in \overline{\iota}(\fbl^{(\infty)}[F])\subset \fbl^{(\infty)}[E]$, $\|f\|_\infty=1$ such that $y^*(f)=1$. Thus, $\widehat{x^*}$ also attains its norm at $f$. \smallskip

$(5)$ follows now directly from the above and the preceding lemma.
\end{proof}

\section{Free Banach lattices over lattices}
\label{Section:FBLlattices}

In this section we focus on the setting of free Banach lattices generated by lattices. Here, a \textit{lattice} is a non-empty set $\mathbb{L}$ with a partial order such that for every $x,y\in\mathbb{L}$, the set $\{x,y\}$ has both a supremum $x\lor y$ and an infimum $x\land y$. Throughout this section, by \textit{lattice homomorphism} we refer to any map $T:\mathbb{L}\rightarrow \mathbb{M}$ between two lattices $\mathbb{L}$ and $\mathbb{M}$ that preserves lattice operations, i.e., $T(x\vee y)=Tx \vee Ty$ and $T(x\wedge y)=Tx \wedge Ty$ for every $x,y\in \mathbb{L}$. To avoid confusion, we will use the term \textit{linear lattice homomorphism} when referring to linear and bounded operators between Banach lattices that preserve lattice operations. \medskip

The free Banach lattice generated by a lattice $\mathbb{L}$, denoted $\fbl\langle\mathbb{L}\rangle$, was introduced in \cite{AR-A19}, following a similar approach to \cite{ART18}. Later, it was deduced in \cite[Theorem 3.9]{AMRR22} that $\fbl\langle\mathbb{L}\rangle$ is always 2-isomorphic to an AM-space. In this section, we provide a direct proof of this fact, and furthermore we show that every linear lattice homomorphism on the free Banach lattice generated by a lattice attains its norm. But first, we extend the definition of the free Banach lattice generated by a lattice to the setting of $p$-convex Banach lattices for $1\leq p\leq \infty$, in a analogous way to \cite{JLTTT}.

\begin{defn}
Given $1\leq p\leq \infty$, the \textit{free $p$-convex Banach lattice} over a lattice $\mathbb{L}$ is a $p$-convex Banach lattice $\fbl^{(p)}\langle\mathbb{L}\rangle$ (with $p$-convexity constant 1) together with a norm-bounded lattice homomorphism $\phi:\mathbb{L}\to \fbl^{(p)}\langle\mathbb{L}\rangle$ with the property that for every $p$-convex Banach lattice $X$ (with $p$-convexity constant 1) and every norm-bounded lattice homomorphism $T:\mathbb{L}\to X$ there is a unique linear lattice homomorphism $\widehat{T}:\fbl^{(p)}\langle\mathbb{L}\rangle\to X$ such that $T=\widehat{T}\circ\phi$ and $\|\widehat{T}\|=\|T\|$. Here ``norm-bounded'' means $\|T\|:=\sup\{\|T(x)\|\::\:x\in\mathbb{L}\}<\infty$.
\end{defn}

As in the case of free Banach lattices generated by Banach spaces, when $p=1$ we recover the free Banach lattice generated by a lattice, $\fbl\langle\mathbb{L}\rangle$, while $\fbl^{(\infty)}\langle\mathbb{L}\rangle$ can also be called the \textit{free AM-space over} $\mathbb{L}$, since the classes of AM-spaces and $\infty$-convex Banach lattices with constant $1$ coincide.

\begin{rem}
    In the definition above, the assumption of norm-boundedness of $T$ followed by the requirement $\|\widehat{T}\|=\|T\|$ can be replaced with the assumption that $\|T\|\leq 1$ and the requirement that $\|\widehat{T}\|\leq 1$.
\end{rem}

The existence of $\fbl\langle \mathbb{L}\rangle$ was established in \cite[Section 2]{AR-A19}. More specifically, it is shown that $\fbl\langle \mathbb{L}\rangle$ can be identified with the quotient of $\fbl[\ell_1(\mathbb{L})]$ with respect to the closed ideal $I$ generated by the set 
$$\{u(x)\vee u(y)-u(x\vee y), \,u(x)\wedge u(y)-u(x\wedge y):x,y\in\mathbb{L}\},$$
where $u:\mathbb{L}\rightarrow \fbl[\ell_1(\mathbb{L})]$ denotes the canonical embedding (recall that for any set $A$, $\fbl[\ell_1(A)]$ coincides with $\fbl(A)$, the free Banach lattice generated by the set $A$, introduced in \cite{dPW15}). Arguing analogously, it can be shown that $\fbl^{(p)}\langle\mathbb{L}\rangle$ exists for every $1\leq p\leq \infty$ and coincides with the quotient of $\fbl^{(p)}[\ell_1(\mathbb{L})]$ with respect to the closed ideal $I$ generated by the same set above.\medskip

In order to investigate these objects more thoroughly, let us start by recalling and setting the notation we will use. We will write:
$$
\mathbb{L}^*=\{x^*:\mathbb{L}\to [-1,1] \::\: x^* \text{ is a lattice homomorphism}\}.
$$
By Tychonoff's theorem, $[-1,1]^{\mathbb{L}}$ is a compact space with respect to the product topology and it is not difficult to check that $\mathbb{L}^*$ is a positively balanced closed subset of it, so $\mathbb{L}^*$ is a compact space with the product topology. We will always assume that $\mathbb{L}^*$ is equipped with this topology. Let $C_{ph}(\mathbb{L}^*)$ stand for the space of continuous positively homogeneous functions on $\mathbb{L}^*$ endowed with the supremum norm $\|\cdot\|_{\infty}$. For every $x\in \mathbb{L}$, we consider the evaluation function $\delta_x:\mathbb{L}^*\to\mathbb{R}$ given by $\delta_x(x^*)=x^*(x)$. It is easy to see that $\delta_x\in C_{ph}(\mathbb{L}^*)$, for every $x\in \mathbb{L}$.

\begin{prop}\label{prop: freeam}
$C_{ph}(\mathbb{L}^*)= \overline{\lat}^{\|\cdot\|_\infty}\{\delta_x:x\in\mathbb{L}\}$ together with the map $\phi_\infty(x)=\delta_x$ is the free AM-space over $\mathbb{L}$.
\end{prop}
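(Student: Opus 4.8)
The plan is to verify that the pair $(C_{ph}(\mathbb{L}^*), \phi_\infty)$ satisfies the universal property defining $\fbl^{(\infty)}\langle\mathbb{L}\rangle$, and then appeal to the uniqueness of free objects. First I would record that $\phi_\infty$ is indeed a norm-bounded lattice homomorphism from $\mathbb{L}$ into $C_{ph}(\mathbb{L}^*)$: boundedness is clear since $\|\delta_x\|_\infty = \sup_{x^*\in\mathbb{L}^*}|x^*(x)| \leq 1$, and the lattice-homomorphism property $\delta_{x\vee y} = \delta_x\vee\delta_y$, $\delta_{x\wedge y}=\delta_x\wedge\delta_y$ holds pointwise on $\mathbb{L}^*$ precisely because every $x^*\in\mathbb{L}^*$ preserves lattice operations. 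Note also that $C_{ph}(\mathbb{L}^*)$, being a closed sublattice of the $C(K)$-type space of positively homogeneous continuous functions on the compact set $\mathbb{L}^*$, is an AM-space, hence $\infty$-convex with constant $1$.

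The core of the argument is the factorization. Given any AM-space $X$ (equivalently, $\infty$-convex Banach lattice with constant $1$) and a norm-bounded lattice homomorphism $T:\mathbb{L}\to X$, I would produce a linear lattice homomorphism $\widehat{T}:C_{ph}(\mathbb{L}^*)\to X$ with $\widehat{T}\circ\phi_\infty = T$ and $\|\widehat{T}\|=\|T\|$. Here one uses that $C_{ph}(\mathbb{L}^*)$ is by definition the closed sublattice generated by $\{\delta_x : x\in\mathbb{L}\}$: on this generating set there is no choice, $\widehat{T}(\delta_x)$ must equal $Tx$, and since a linear lattice homomorphism is determined on the sublattice generated by a set once it is fixed on that set, uniqueness of $\widehat{T}$ follows automatically. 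For existence and the norm control, I would invoke the universal property of the free AM-space over the \emph{set} $\mathbb{L}$ (equivalently, $\fbl^{(\infty)}[\ell_1(\mathbb{L})]$): the map $T$, viewed as a bounded family $(Tx)_{x\in\mathbb{L}}$ in $X$ with $\sup_x\|Tx\| = \|T\|$, extends to a lattice homomorphism $S:\fbl^{(\infty)}[\ell_1(\mathbb{L})]\to X$ of norm $\|T\|$; since $T$ is a lattice homomorphism \emph{on} $\mathbb{L}$, $S$ annihilates all the relators $u(x)\vee u(y)-u(x\vee y)$ and $u(x)\wedge u(y)-u(x\wedge y)$, hence factors through the quotient, which — by the presentation of $\fbl^{(\infty)}\langle\mathbb{L}\rangle$ recalled just before the statement — is the closed sublattice generated by the $\delta_x$, i.e. $C_{ph}(\mathbb{L}^*)$. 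This yields $\widehat{T}$ with $\|\widehat{T}\| \le \|T\|$; the reverse inequality is immediate from $\widehat{T}\circ\phi_\infty = T$ and $\|\phi_\infty\|\le 1$, so $\|\widehat{T}\|=\|T\|$.

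Once the universal property is checked, the conclusion that $C_{ph}(\mathbb{L}^*)$ \emph{equals} $\fbl^{(\infty)}\langle\mathbb{L}\rangle$ (not merely is isomorphic to it) follows because the free object is unique up to lattice isometry and, by the isometry $\|\delta_x\|_\infty\le 1 = \|x\|_{\fbl^{(\infty)}\langle\mathbb{L}\rangle}$ on generators together with the matching norm estimates, the canonical comparison map is a surjective lattice isometry.

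I expect the main obstacle to be a clean verification that the closed sublattice of $C_{ph}(\mathbb{L}^*)$ generated by $\{\delta_x : x\in\mathbb{L}\}$ is \emph{all} of $C_{ph}(\mathbb{L}^*)$ — i.e. the density statement built into the displayed identity. This is essentially a Stone--Weierstrass argument for positively homogeneous functions: the family $\{\delta_x\}$ separates the rays of $\mathbb{L}^*$ in the sense needed (if $x^*,y^*\in\mathbb{L}^*$ are not positive multiples of one another, then $x^*(x)\ne y^*(x)$ for some $x\in\mathbb{L}$, since $x^*$ and $y^*$ are distinct elements of $\mathbb{L}^*\subset[-1,1]^{\mathbb{L}}$ that are not proportional), and then the lattice version of Stone--Weierstrass for $C_{ph}$ on a positively balanced compact set — exactly the criterion appearing in part (i) of Proposition~\ref{prop:evaluation} — gives density. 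So the real content is bookkeeping: matching the quotient presentation of $\fbl^{(\infty)}\langle\mathbb{L}\rangle$ with the sublattice description, and checking that the relator ideal $I$ in $\fbl^{(\infty)}[\ell_1(\mathbb{L})]$ maps onto the kernel of the evaluation lattice homomorphism $f\mapsto (x^*\mapsto f(\text{restriction to }\mathbb{L}^*))$ landing in $C_{ph}(\mathbb{L}^*)$.
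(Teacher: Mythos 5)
Your overall skeleton --- verify the universal property of $C_{ph}(\mathbb{L}^*)$ and invoke uniqueness of free objects, with density of $\lat\{\delta_x\}$ supplied by Proposition \ref{prop:evaluation}(i) --- matches the paper, and the density half is essentially the paper's argument. But the existence half of the universal property has a genuine gap. You build $S:\fbl^{(\infty)}[\ell_1(\mathbb{L})]\to X$ from the universal property of the free AM-space over the \emph{set} $\mathbb{L}$, observe that $S$ kills the relators, and conclude that $S$ ``factors through the quotient, which \dots is the closed sublattice generated by the $\delta_x$, i.e.\ $C_{ph}(\mathbb{L}^*)$.'' That last identification is not contained in the presentation recalled before the statement: the paper only asserts there that the quotient $\fbl^{(\infty)}[\ell_1(\mathbb{L})]/\overline{I}$ is the free object $\fbl^{(\infty)}\langle\mathbb{L}\rangle$; identifying that quotient, \emph{isometrically}, with $\bigl(C_{ph}(\mathbb{L}^*),\|\cdot\|_\infty\bigr)$ is precisely the content of the proposition you are proving. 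To pass from the induced map on the quotient to a map $\widehat{T}$ defined on $C_{ph}(\mathbb{L}^*)$ you need the restriction homomorphism $R:\fbl^{(\infty)}[\ell_1(\mathbb{L})]=C_{ph}([-1,1]^{\mathbb{L}})\to C_{ph}(\mathbb{L}^*)$ to induce an injective, norm-preserving isomorphism of the quotient onto $C_{ph}(\mathbb{L}^*)$; equivalently, $\ker R=\overline{I}$ and the quotient norm equals $\sup_{x^*\in\mathbb{L}^*}|f(x^*)|$. Neither is established, and this is not bookkeeping: it amounts to the estimate $\|Sf\|\le\sup_{x^*\in\mathbb{L}^*}|f(x^*)|$ for every lattice homomorphism $S$ vanishing on $I$, which is essentially the statement you are trying to deduce, so as written the argument is circular at this point.

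The paper sidesteps this by constructing $\widehat{T}$ directly: it represents $X$ as $C_{ph}(K_X)$ via Proposition \ref{prop:norming}, sends $z^*\in K_X$ to $\psi(z^*):=z^*\circ T\in\mathbb{L}^*$ (here is where the hypothesis that $T$ preserves lattice operations is actually used), and defines $\widehat{T}f:=f\circ\psi$, which is manifestly a contractive linear lattice homomorphism with $\widehat{T}\delta_x=Tx$. Inserting that composition-operator construction in place of the quotient argument (or using it to prove the missing estimate on $S$) closes your proof. One further minor imprecision: the separation needed for Proposition \ref{prop:evaluation}(i) is not merely ``$x^*(x)\ne y^*(x)$ for some $x$'' but the existence of $f$ in the generated sublattice with $f(x^*)=0\ne f(y^*)$ whenever $x^*,y^*$ are not positive multiples; this requires treating the negative-multiple case (via $\delta_x^+$) and the non-proportional case (via a combination $\delta_u-\gamma\delta_v$) separately, as the paper does, rather than plain point separation.
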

\begin{proof}
Let us first prove the equality. According to part (i) of Proposition \ref{prop:evaluation} it is enough to show that if $x^*$ and $y^*$ in $\mathbb{L}^*\backslash\{0\}$ are not positive multiples, there is $f\in \text{lat}\{\delta_x,~x\in \mathbb{L}\}$ such that $f(x^*)=0\ne f(y^*)$. Indeed, suppose that $x^*=\lambda y^*$ for some $\lambda<0$. We may assume that there exists $x\in\mathbb{L}$ such that $y^*(x)>0$, so if we define $f:=\delta_x^+=\delta_x\lor 0$, then $f(x^*)=0< f(y^*)$. On the other hand, if $x^*$ and $y^*$ are not proportional, then there are $u,v\in \mathbb{L}$ such that $(x^*(u), x^*(v))$ and $(y^*(u), y^*(v))$ are not proportional.
Without loss of generality we suppose $x^*(v)\neq 0$. 
Take $f:=\delta_u-\frac{x^*(u)}{x^*(v)}\delta_v$. It is clear that $f(x^*)=0$. On the other hand, $f(y^*)=y^*(u)-\frac{x^*(u)}{x^*(v)}y^*(v)\ne 0$, as $(x^*(u), x^*(v))$ and $(y^*(u), y^*(v))$ are not proportional.\medskip

Now let $X$ be an AM-space and $T:\mathbb{L}\to X$ a lattice homomorphism with range contained in $B_X$. By Proposition \ref{prop:norming} we may assume that $X=C_{ph}(K_X)$. Every $z^*\in K_X$ maps $B_X$ into $[-1,1]$, and so $\psi(z^*):=z^*\circ T$ is a lattice homomorphism from $\mathbb{L}$ into $[-1,1]$, i.e. $\psi(z^*)\in \mathbb{L}^*$. It is easy to see that $\psi:K_X\to \mathbb{L}^*$ is positively homogeneous and continuous with respect to the weak* topology on $K_X$ and the product topology on $\mathbb{L}^*$. Hence, the composition operator $\widehat{T}: C_{ph}(\mathbb{L}^*)\to C_{ph}(K_X)$ given by $\widehat{T}f(z^*)=f(\psi(z^*))$ is a well-defined linear lattice homomorphism of norm at most $1$. Finally, for every $x\in \mathbb{L}$ and $z^*\in K_X$ we have $$\widehat{T}\delta_x(z^*)=\delta_x(\psi(z^*))=\psi(z^*)(x)=z^*(Tx),$$ hence $\widehat{T}\delta_x=Tx$.

For the uniqueness of extension assume that $S:C_{ph}(\mathbb{L}^*)\to X$ is a linear lattice homomorphism which agrees with $\widehat{T}$ on $\delta_x$, for every $x\in \mathbb{L}$, then they have to agree on the closed sublattice generated by these vectors. As $\overline{\lat}^{\|\cdot\|_\infty}\{\delta_x:x\in\mathbb{L}\}=C_{ph}(\mathbb{L}^*)$, we conclude that $\widehat{T}=S$.
\end{proof}

Let us now switch to the case $1\leq p<\infty$. For $f\in\mathbb{R}^{\mathbb{L}^*}$, define
\begin{equation}\label{eq:free-norm-lattice}
 \|f\|_p=\sup\left\{\left(\sum_{i=1}^n|f(x_i^*)|^p\right)^\frac{1}{p} \::\:n\in\mathbb{N},\:x_1^*,\ldots,x_n^*\in\mathbb{L}^*,\:\sup_{x\in\mathbb{L}}\sum_{i=1}^n|x_i^*(x)|^p \leq 1\right\}.
\end{equation}
It was proven in \cite[Theorem 1.2]{AR-A19} that $$\fbl\langle \mathbb{L}\rangle=\overline{\lat}^{\|\cdot\|_1}\{\delta_x:x\in\mathbb{L}\},$$ 
where the closure is taken in $\{f\in\mathbb{R}^{\mathbb{L}^*}\::\:\|f\|_1<\infty\}$, and $\phi(x)=\delta_x$, for every $x\in\mathbb{L}$. It is not difficult to check that the same proof can be adapted verbatim to show that 
$$\fbl^{(p)}\langle \mathbb{L}\rangle=\overline{\lat}^{\|\cdot\|_p}\{\delta_x:x\in\mathbb{L}\},$$
where this time the closure is taken in $\{f\in\mathbb{R}^{\mathbb{L}^*}\::\:\|f\|_p<\infty\}$.\medskip

Clearly, $\|\cdot\|_\infty\leq \|\cdot\|_p$ for every $1\leq p<\infty$. Since $\lat\{\delta_x:x\in\mathbb{L}\}$ is contained in $C_{ph}(\mathbb{L}^*)$, it follows that the identity operator $id_p:\fbl^{(p)}\langle \mathbb{L}\rangle \rightarrow C_{ph}(\mathbb{L}^*)$ is bounded. On the other hand, the universal property of $\fbl\langle \mathbb{L}\rangle$ implies that there exists a contractive linear lattice homomorphism $\widehat{\phi_p}:\fbl\langle \mathbb{L}\rangle\rightarrow \fbl^{(p)}\langle \mathbb{L}\rangle$ that extends the canonical map $\phi_p:\mathbb{L}\rightarrow \fbl^{(p)}\langle \mathbb{L}\rangle$, and hence, coincides with the identity map on $\lat\{\delta_x:x\in\mathbb{L}\}$. It is clear that $id_p\circ \widehat{\phi_p}=id_1$, so actually $\widehat{\phi_p}$ must be the identity from $\fbl\langle \mathbb{L}\rangle$ into $\fbl^{(p)}\langle \mathbb{L}\rangle$, and we can properly say that $\|f\|_p\leq \|f\|_1$ for every $f\in \fbl\langle \mathbb{L}\rangle$. Additionally, it can be deduced from \cite[Theorem 3.6]{AMRR22} that $\|\cdot\|_1\leq 2\|\cdot \|_\infty$ (where the authors show that $\fbl\langle\mathbb{L}\rangle$ is $2$-linearly lattice isomorphic to an AM-space for any $\mathbb{L}$). We include here a direct proof of this fact.

\begin{prop}\label{prop:fbll}
Let $1\leq p<\infty$. For every $f\in C_{ph}(\mathbb{L}^*)$ we have $\|f\|_p\leq 2^\frac{1}{p}\|f\|_\infty$. Moreover, $\left(C_{ph}(\mathbb{L}^*),\|\cdot\|_p\right)$ together with the map $\phi_p(x)=\delta_x$ is the free $p$-convex Banach lattice over $\mathbb{L}$.
\end{prop}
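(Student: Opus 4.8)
The statement has two parts: the norm estimate $\|f\|_p \le 2^{1/p}\|f\|_\infty$ for $f \in C_{ph}(\mathbb{L}^*)$, and the identification of $(C_{ph}(\mathbb{L}^*), \|\cdot\|_p)$ as the free $p$-convex Banach lattice over $\mathbb{L}$. I would tackle the norm estimate first, since it is the analytic core and the universal-property part will then follow by the same scheme already used in Proposition \ref{prop: freeam}.

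\textbf{Step 1: the estimate $\|f\|_p \le 2^{1/p}\|f\|_\infty$.} Fix $f \in C_{ph}(\mathbb{L}^*)$ with $\|f\|_\infty \le 1$, and take $x_1^*,\dots,x_n^* \in \mathbb{L}^*$ with $\sup_{x\in\mathbb{L}} \sum_{i=1}^n |x_i^*(x)|^p \le 1$; I must bound $\sum_{i=1}^n |f(x_i^*)|^p$ by $2$. The idea is to exploit that each $x_i^* \in \mathbb{L}^*$ takes values in $[-1,1]$, so one can split each functional into its ``positive and negative parts'' in a lattice sense: writing $x_i^* = (x_i^*)^+ - (x_i^*)^-$ inside the lattice $\mathbb{L}^* \subset [-1,1]^{\mathbb{L}}$, or more concretely observing that $x_i^* \vee 0$ and $(-x_i^*)\vee 0$ are again elements of $\mathbb{L}^*$ (being compositions of the lattice homomorphism $x_i^*$ with the lattice homomorphisms $t \mapsto t\vee 0$ and $t\mapsto (-t)\vee 0$ on $[-1,1]$). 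Since $f$ is positively homogeneous and weak${}^*$-continuous, $|f(x_i^*)| \le |f((x_i^*)^+)| + |f((x_i^*)^-)|$ is not quite what I want; instead I would argue that for the purpose of the supremum one may assume all $x_i^* \ge 0$ after splitting the index set into those where $f(x_i^*)\ge 0$ and those where $f(x_i^*)<0$, at the cost of the constant $2$. Concretely: split $\{1,\dots,n\}$ into $A = \{i : f(x_i^*)\ge 0\}$ and $B$ its complement. On $A$, note $\sum_{i\in A}|f(x_i^*)|^p = \sum_{i\in A} f(x_i^*)^p$ and one checks that $g := \bigvee_{i\in A}\frac{\delta_{(\cdot)}}{?}$... more simply: for each $i\in A$ pick (using $\|f\|_\infty\le 1$ and $x^*$-homogeneity) that $f(x_i^*) = f(x_i^*)\cdot x_i^*(\text{something})$ is bounded by evaluating a single fixed element; the cleanest route is to use that $\sum_{i\in A} f(x_i^*)^p \le \sum_{i\in A}\bigl(\sup_{x} x_i^*(x)\cdot \|f\|_\infty/\dots\bigr)$. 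Let me instead take the direct route which I expect actually works: the condition $\sup_x \sum_i |x_i^*(x)|^p \le 1$ together with $|f(x_i^*)| \le \|f\|_\infty = 1$ does NOT immediately give the bound, so the real content is that $f$, being in $C_{ph}(\mathbb{L}^*)$, is uniformly approximable by elements of $\lat\{\delta_x : x\in\mathbb{L}\}$; for such a lattice expression $g = $ (a lattice combination of finitely many $\delta_{x_j}$), one shows $\sum_i |g(x_i^*)|^p \le 2^{?}\dots$ by induction on the lattice word using that $|a\vee b|^p + |c\vee d|^p \le \dots$ — this is where the factor $2$ enters, coming from $|s\vee t| \le |s| + |t|$ applied once. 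Passing to the limit in $\|\cdot\|_\infty$ (legitimate because $\|\cdot\|_p \le \|\cdot\|_1$ was already noted on $\fbl\langle\mathbb{L}\rangle$, and lattice expressions are dense) gives the estimate for all $f\in C_{ph}(\mathbb{L}^*)$. \emph{I expect this reduction-to-lattice-words step, and pinning down exactly where the constant $2$ is spent, to be the main obstacle.}

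\textbf{Step 2: $\|\cdot\|_p$ and $\|\cdot\|_\infty$ are equivalent on $C_{ph}(\mathbb{L}^*)$.} Combining $\|\cdot\|_\infty \le \|\cdot\|_p$ (immediate from \eqref{eq:free-norm-lattice}, taking $n=1$) with Step 1 gives $\|f\|_\infty \le \|f\|_p \le 2^{1/p}\|f\|_\infty$. In particular $(C_{ph}(\mathbb{L}^*),\|\cdot\|_p)$ is complete (being $\|\cdot\|_\infty$-complete), it is a Banach lattice, and by the definition of $\|\cdot\|_p$ it is $p$-convex with constant $1$. Moreover $\overline{\lat}^{\|\cdot\|_p}\{\delta_x : x\in\mathbb{L}\} = \overline{\lat}^{\|\cdot\|_\infty}\{\delta_x:x\in\mathbb{L}\} = C_{ph}(\mathbb{L}^*)$ by Proposition \ref{prop: freeam}, which matches the representation $\fbl^{(p)}\langle\mathbb{L}\rangle = \overline{\lat}^{\|\cdot\|_p}\{\delta_x:x\in\mathbb{L}\}$ recalled just above the statement.

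\textbf{Step 3: the universal property.} Here I would mimic the second half of the proof of Proposition \ref{prop: freeam} verbatim, only replacing AM-spaces and Proposition \ref{prop:norming} by $p$-convex Banach lattices (constant $1$) and their analogous representation. Given a $p$-convex Banach lattice $X$ with constant $1$ and a lattice homomorphism $T:\mathbb{L}\to B_X$, one first checks uniqueness: any linear lattice homomorphism agreeing with $\widehat T$ on all $\delta_x$ agrees on $\overline{\lat}\{\delta_x\} = C_{ph}(\mathbb{L}^*)$. For existence, the cleanest argument is to use the already-established universal property of $\fbl\langle\mathbb{L}\rangle = (C_{ph}(\mathbb{L}^*),\|\cdot\|_1)$: $T$ extends to a contractive $\widehat T_1 : \fbl\langle\mathbb{L}\rangle \to X$; since $X$ is $p$-convex with constant $1$ and $\widehat T_1$ is a lattice homomorphism, the defining inequality \eqref{eq:free-norm-lattice} of $\|\cdot\|_p$ forces $\|\widehat T_1 f\| \le \|f\|_p$ for every $f$ (evaluate the $p$-convexity inequality on the images of the $\delta_x$'s), so $\widehat T_1$ is contractive for $\|\cdot\|_p$ as well and hence defines the desired $\widehat T : \fbl^{(p)}\langle\mathbb{L}\rangle \to X$ with $\|\widehat T\| \le 1$. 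Conversely $\|\widehat T\| \ge \|T\|$ since $\widehat T\phi_p = T$. This completes the identification, and the argument requires no essentially new idea beyond Steps 1--2.
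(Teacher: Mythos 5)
Your Step 1 is a genuine gap: you never actually prove the estimate $\|f\|_p\leq 2^{1/p}\|f\|_\infty$, and you explicitly flag the reduction to lattice words and the bookkeeping of the constant $2$ as "the main obstacle" without resolving it. The proposed induction on lattice expressions in the $\delta_{x_j}$ is the wrong route: the constraint $\sup_{x\in\mathbb{L}}\sum_{i=1}^n|x_i^*(x)|^p\leq 1$ does not interact term by term with $\vee$ and $\wedge$, and a naive induction on the length of a lattice word would accumulate constants rather than produce a uniform factor of $2$. The idea you are missing is to work directly with an arbitrary $f\in C_{ph}(\mathbb{L}^*)$ and to renormalize the test functionals rather than decompose $f$: set $m_i=\inf_{x}x_i^*(x)$ and $M_i=\sup_{x}x_i^*(x)$, so that $y_i^*:=\frac{1}{|m_i|\vee|M_i|}x_i^*$ again lies in $\mathbb{L}^*$ and positive homogeneity gives $|f(x_i^*)|\leq(|m_i|\vee|M_i|)\|f\|_\infty$. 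The whole point is then to bound $\sum_i(|m_i|\vee|M_i|)^p\leq\sum_i|m_i|^p+\sum_i|M_i|^p$ (this single inequality is where the $2$ is spent), and to control each of the two sums by (essentially) $1$ using that $\mathbb{L}$ is a lattice: choosing near-extremizers $x_{i,m}$, $x_{i,M}$ and forming $x_m=\bigwedge_i x_{i,m}$ and $x_M=\bigvee_i x_{i,M}$ produces a \emph{single} element of $\mathbb{L}$ at which all the $x_i^*$ are simultaneously within $\varepsilon$ of their infima (resp.\ suprema), because each $x_i^*$ is a lattice homomorphism; the hypothesis $\sup_x\sum_i|x_i^*(x)|^p\leq1$ applied at $x_m$ and at $x_M$ then finishes the bound. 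Without this device your argument does not close.

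Your Steps 2 and 3 are essentially sound. Step 2 matches the paper. In Step 3 you re-derive the universal property by factoring through $\fbl\langle\mathbb{L}\rangle$ and invoking $p$-convexity of the target to upgrade contractivity from $\|\cdot\|_1$ to $\|\cdot\|_p$; this is a legitimate (if under-detailed) alternative, but the paper avoids it entirely by appealing to the representation $\fbl^{(p)}\langle\mathbb{L}\rangle=\overline{\lat}^{\|\cdot\|_p}\{\delta_x:x\in\mathbb{L}\}$ established in the discussion preceding the proposition, so that once the two norms are known to be equivalent it only remains to quote the density statement of Proposition \ref{prop: freeam}. If you keep your route, you must actually justify the inequality $\|\widehat{T}_1f\|\leq\|f\|_p$ for all $f$, not only hint at it.
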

\begin{proof}
In order to prove the first claim, it is enough to show that if $x_1^*,\ldots,x_n^*\in\mathbb{L}^*$ are such that $\sup_{x\in\mathbb{L}}\sum_{i=1}^n |x_i^*(x)|^p\leq 1$, then $\left(\sum_{i=1}^n |f(x_i^*)|^p\right)^\frac{1}{p}\leq 2^\frac{1}{p}\|f\|_{\infty}$. 

Fix $\varepsilon>0$. For each $i=1,\ldots,n$, we define $m_i=\inf\{x_i^*(x)\::\: x\in\mathbb{L}\}$ and $M_i=\sup\{x_i^*(x)\::\: x\in\mathbb{L}\}$ and find $x_{i,m},\,x_{i,M}\in\mathbb{L}$ such that
$$
0\leq x_i^*(x_{i,m})-m_i\leq \varepsilon \quad \text{ and } \quad 0\leq M_i-x_i^*(x_{i,M})\leq \varepsilon.
$$
Now, define $x_m=\bigwedge_{i=1}^n x_{i,m}$ and $x_M=\bigvee_{i=1}^n x_{i,M}$. For every $1\leq i\leq n$ we have $$
0\leq x_i^*(x_{m})-m_i\leq \varepsilon \quad \text{ and } \quad 0\leq M_i-x_i^*(x_{M})\leq \varepsilon.
$$
It follows that $$1\geq \left(\sum_{i=1}^n |x_i^*(x_m)|^p\right)^\frac{1}{p}\geq \left(\sum_{i=1}^n |m_i|^p\right)^\frac{1}{p}-n^\frac{1}{p}\varepsilon,$$ and 
$$1\geq \left(\sum_{i=1}^n |x_i^*(x_M)|^p\right)^\frac{1}{p}\geq \left(\sum_{i=1}^n |M_i|^p\right)^\frac{1}{p}-n^\frac{1}{p}\varepsilon.$$ For every $1\leq i\leq n$ we have that $y_i^*:=\frac{1}{|m_i|\vee |M_i|}x_i^*\in \mathbb{L}^*$, hence $$|f(x^*_i)|=(|m_i|\vee |M_i|)|f(y^*_i)|\leq (|m_i|\vee |M_i|)\|f\|_{\infty}.$$ We conclude that 
\begin{align*}
    \left(\sum_{i=1}^n |f(x_i^*)|^p\right)^\frac{1}{p} & \leq \|f\|_{\infty}\left(\sum_{i=1}^n (|m_i|\vee |M_i|)^p\right)^\frac{1}{p}\\
    & \leq \|f\|_{\infty}\left(\sum_{i=1}^n |m_i|^p+\sum_{i=1}^n |M_i|^p\right)^\frac{1}{p}\leq 2^\frac{1}{p}(1+n^\frac{1}{p}\varepsilon)\|f\|_{\infty}.
\end{align*}
As $\varepsilon$ was arbitrary the claim follows.\medskip

Note that $C_{ph}(\mathbb{L}^*)$ is a Banach lattice with respect to $\|\cdot\|_p$, which contains $\{\delta_x:x\in\mathbb{L}\}$. Hence, according to the discussion before the proposition, in order to prove the second claim, it is enough to show that $$C_{ph}(\mathbb{L}^*)=\overline{\lat}^{\|\cdot\|_p}\{\delta_x:x\in\mathbb{L}\}.$$
Since the two norms are equivalent, the closure may be taken with respect to $\|\cdot\|_\infty$, and so the statement follows from Proposition \ref{prop: freeam}.
\end{proof}

We conclude this section by showing that every linear lattice homomorphism on $\fbl^{(p)}\langle\mathbb{L}\rangle$ attains its norm. Note that although we have just seen that the free norm of $\fbl^{(p)}\langle \mathbb{L}\rangle$ is equivalent to the supremum norm, this does not guarantee that its linear lattice homomorphisms attain their norm. Also observe that it is not difficult to find very simple examples of lattice homomorphisms $x^*:\mathbb{L}\to\mathbb{R}$ which do not attain their norm: the naturals $\mathbb{N}$ with the usual order and $x^*:\mathbb{N}\to\mathbb{R}$ defined by $x^*(n):=1-\frac{1}{n}$; the interval $(-1,1)$ with the usual order and $x^*$ being the identity on $(-1,1)$; another example is, given an AM-space $X$, to consider $\mathbb{L}=\{x\in X\::\:\|x\|<1\}$ and take as $x^*$ the restriction to $\mathbb{L}$ of a norm-one linear lattice homomorphism defined on $X$.

\begin{prop}
\label{prop:normattainingFBLlattice}
Every linear lattice homomorphism on $\fbl^{(p)}\langle\mathbb{L}\rangle$ attains its norm.
\end{prop}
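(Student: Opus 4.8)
For $p=\infty$ the statement is already contained in Theorem \ref{THMAM}, since $\fbl^{(\infty)}\langle\mathbb{L}\rangle=C_{ph}(\mathbb{L}^*)$ is an AM-space (Proposition \ref{prop: freeam}); so from now on I would assume $1\le p<\infty$. The plan is to use the representation $X:=\fbl^{(p)}\langle\mathbb{L}\rangle=(C_{ph}(\mathbb{L}^*),\|\cdot\|_p)=\overline{\lat}^{\|\cdot\|_p}\{\delta_x:x\in\mathbb{L}\}$ (Proposition \ref{prop:fbll}) together with the characterization of norm-attaining lattice homomorphisms in Proposition \ref{prop:characterizationLatNA}. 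Let $\Phi\colon X\to\mathbb{R}$ be a nonzero linear lattice homomorphism; rescaling, we may assume $\|\Phi\|=1$. First I would observe that $\|\delta_x\|_p\le1$ for every $x\in\mathbb{L}$: whenever $x_1^*,\dots,x_n^*\in\mathbb{L}^*$ satisfy $\sup_{y\in\mathbb{L}}\sum_i|x_i^*(y)|^p\le1$, evaluating the constraint at $y=x$ gives $\sum_i|\delta_x(x_i^*)|^p=\sum_i|x_i^*(x)|^p\le1$, so taking the supremum in \eqref{eq:free-norm-lattice} yields $\|\delta_x\|_p\le1$. Consequently $T:=\Phi\circ\phi_p\colon\mathbb{L}\to\mathbb{R}$, $T(x)=\Phi(\delta_x)$, is a lattice homomorphism with $\sup_{x\in\mathbb{L}}|T(x)|\le\|\Phi\|=1$, i.e.\ $T\in\mathbb{L}^*$, and the evaluation functional $\mathrm{ev}_T\colon f\mapsto f(T)$ is a well-defined linear lattice homomorphism on $X$ (with $\|\mathrm{ev}_T\|\le1$, since $|f(T)|\le\|f\|_\infty\le\|f\|_p$) that agrees with $\Phi$ on each $\delta_x$, hence on $\lat\{\delta_x:x\in\mathbb{L}\}$, hence on all of $X$ by density and continuity; thus $\Phi=\mathrm{ev}_T$.

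Next I would pin down the norm. Put $s:=\sup_{x\in\mathbb{L}}|T(x)|\le1$; note $s>0$, since otherwise $T=0$ and then $\Phi=\mathrm{ev}_T=0$. The map $x\mapsto T(x)/s$ again lies in $\mathbb{L}^*$, and since the functions in $X$ are positively homogeneous, $|f(T)|=s\,|f(T/s)|\le s\,\|f\|_p$ for every $f\in X$ — here $T/s$ is used as a single admissible test point in \eqref{eq:free-norm-lattice}. Hence $\|\Phi\|=\|\mathrm{ev}_T\|\le s$; conversely $|T(x)|=|\Phi(\delta_x)|\le\|\Phi\|\,\|\delta_x\|_p\le\|\Phi\|$, so $s\le\|\Phi\|$. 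Therefore $\sup_{x\in\mathbb{L}}|T(x)|=\|\Phi\|=1$, and in particular $\sup_{x\in\mathbb{L}}T(x)=1$ or $\inf_{x\in\mathbb{L}}T(x)=-1$.

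Finally I would build the increasing sequence required by Proposition \ref{prop:characterizationLatNA}. Assume first that $\sup_{x\in\mathbb{L}}T(x)=1$ and pick $x_n\in\mathbb{L}$ with $T(x_n)\to1$. Set $y_n:=\bigvee_{k=1}^n x_k$, an increasing sequence in $\mathbb{L}$ with $T(y_n)=\max_{k\le n}T(x_k)\to1$, and let $f_n:=\delta_{y_n}\vee0\in X$. Since every $x^*\in\mathbb{L}^*$ is order preserving, $\delta_{y_n}\le\delta_{y_{n+1}}$ pointwise on $\mathbb{L}^*$, so $(f_n)_n$ is increasing in $X$; from $0\le f_n\le|\delta_{y_n}|$ and the fact that $\|\cdot\|_p$ is a lattice norm we get $f_n\in B_{X_+}$; and $\Phi(f_n)=f_n(T)=\max\{T(y_n),0\}\to1$. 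Proposition \ref{prop:characterizationLatNA} then gives that $\Phi$ attains its norm. The case $\inf_{x\in\mathbb{L}}T(x)=-1$ is symmetric: take $x_n$ with $T(x_n)\to-1$, set $y_n:=\bigwedge_{k=1}^n x_k$ (decreasing in $\mathbb{L}$, with $T(y_n)\to-1$), and let $f_n:=(-\delta_{y_n})\vee0$, which is again an increasing sequence in $B_{X_+}$ with $\Phi(f_n)=\max\{-T(y_n),0\}\to1$.

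I expect the main subtlety to be that, for $p<\infty$, the norm $\|\cdot\|_p$ is \emph{not} an AM-norm, so one cannot simply take suprema of a bounded approximating sequence inside $B_X$ as in the proof of Theorem \ref{THMAM}. The device that makes the argument work is to carry out the relevant lattice operations one level down, inside the abstract lattice $\mathbb{L}$ — where the generators $\delta_{y_n}$ automatically stay in the unit ball of $X$ — and to pass to positive (respectively negative) parts only afterwards in order to land in $B_{X_+}$.
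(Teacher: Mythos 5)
Your proof is correct and follows essentially the same route as the paper's: represent $\fbl^{(p)}\langle\mathbb{L}\rangle$ as $(C_{ph}(\mathbb{L}^*),\|\cdot\|_p)$, reduce to point evaluations at elements of $\mathbb{L}^*$, take the suprema of an approximating sequence inside the abstract lattice $\mathbb{L}$ (where the generators $\delta_{y_n}$ stay in the unit ball), pass to positive parts, and invoke Proposition \ref{prop:characterizationLatNA}. The only differences are cosmetic: you verify by hand that every linear lattice homomorphism is an evaluation (the paper cites Proposition \ref{prop:evaluation}) and you treat the case $\inf_x T(x)=-1$ explicitly where the paper declares it analogous.
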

\begin{proof}
According to Proposition \ref{prop:fbll} we may assume that $\fbl^{(p)}\langle\mathbb{L}\rangle =C_{ph}(\mathbb{L}^*)$ endowed with the norm $\|\cdot\|_p$. It follows from Proposition \ref{prop:evaluation} that we need to show that point evaluations at the members of $\mathbb{L}^*$ attain their norms. Let $0\ne x^*\in \mathbb{L}^*$ and let $m=\inf\{x^*(x)\::\: x\in\mathbb{L}\}$ and $M=\sup\{x^*(x)\::\: x\in\mathbb{L}\}$. We will assume that $|M|\geq |m|$, which also implies that $M>0$ (the case $|M|\leq |m|$ is analogous). Moreover, replacing $x^*$ with $\frac{1}{M}x^*$ if needed, we may assume that $M=1$. \medskip

Let us first show that $\|\widehat{x^*}\|\leq 1$, where $\widehat{x^*}$ denotes the point evaluation at $x^*$, or equivalently, the unique extension of the lattice homomorphism $x^*$ to $\fbl^{(p)} \langle\mathbb{L}\rangle$ as a linear lattice homomorphism. Indeed, if $f\in C_{ph}(\mathbb{L}^*)$ is such that $\|f\|_p\leq 1$, then $\|f\|_{\infty}\leq 1$, and so $\widehat{x^*}(f)=f(x^*)\leq 1$. On the other hand, for every $n\in\N$ there is $x_n\in\mathbb{L}$ such that $\widehat{x^*}(\delta_{x_n})=x^*(x_n)\geq 1-\frac{1}{n}$. By replacing $x_n$ by $\lor_{k=1}^n x_k$ if necessary, we may assume that $(x_n)_{n=1}^\infty$ is an increasing sequence in $\mathbb{L}$. Since $\phi_p:\mathbb{L}\rightarrow\fbl^{(p)}\langle\mathbb{L}\rangle$ preserves lattice operations, $(\delta_{x_n}^+)_{n=1}^\infty$ defines an increasing sequence in $B_{\fbl^{(p)}\langle\mathbb{L}\rangle^+}$. Thus, by Proposition \ref{prop:characterizationLatNA}, $\widehat{x^*}$ attains its norm.
\end{proof}

\section*{Acknowledgments}
We wish to thank A. Avil\'es and M. A. Taylor for helpful discussions concerning the topic of this paper. 

E.~Bilokopytov is supported by Pacific Institute for the Mathematical Sciences. E. Garc\'ia-S\'anchez was partially supported by FPI grant CEX2019-000904-S-21-3 funded by MCIN/AEI/10.13039/501100011033 and by ``ESF+''. D.~de Hevia benefited from an FPU Grant FPU20/03334 from the Ministerio de Universidades. Research of E. Garc\'ia-S\'anchez, D. de Hevia and P. Tradacete has been partially supported by grants PID2020-116398GB-I00 and CEX2019-000904-S funded by MCIN/AEI/10.13039/501100011033. G. Mart\'inez-Cervantes is partially supported by grants PID2021-122126NB-C32 fun\-ded by MCIN/AEI/10.13039/501100011033, by ERDF A way of making Europe, and by Fundaci\'{o}n S\'{e}neca - ACyT Regi\'{o}n de Murcia (21955/PI/22). P.~Tradacete was also partially supported by grant PCI2024-155094-2 funded by MCIN/AEI/10.13039/501100011033.


\begin{thebibliography}{99}

\bibitem{AA-book} \textsc{Y.~A.~Abramovich and C.~D.~Aliprantis}, \emph{An invitation to operator theory}, Graduate Studies in Mathematics, vol. 50, American Mathematical Society, Providence, RI, 2002.


\bibitem{ASM}
\textsc{M.~D.~Acosta and M.~Soleimani-Mourchehkhorti}, \textit{On the Bishop-Phelps-Bollob\'{a}s property for positive functionals}, Rev. R. Acad. Cienc. Exactas F\'{i}s. Nat. Ser. A Mat. RACSAM 118 (2024).


\bibitem{AMR20}
\textsc{A. Avil\'es, G. Mart\'inez-Cervantes and J.D. Rodr\'iguez-Abell\'an}, \textit{On projective Banach lattices of the form $C(K)$ and $FBL[E]$}, J. Math. Anal. Appl. 489 (2020), no. 1, 124129, 11 pp.

\bibitem{AMRR22}
\textsc{A. Avil\'es, G. Mart\'inez-Cervantes, J.D. Rodr\'iguez-Abell\'an and A. Rueda Zoca}, \textit{Lattice embeddings in free Banach lattices over lattices}. Math. Inequal. Appl. 25 (2022), no. 2, 495--509.

\bibitem{AMRR22b}
\textsc{A. Avil\'es, G. Mart\'inez-Cervantes, J.D. Rodr\'iguez-Abell\'an and A. Rueda Zoca}, \textit{Free Banach lattices generated by a lattice and projectivity}, Proc. Amer. Math. Soc. 150 (2022), no. 5, 2071--2082.

\bibitem{ART18} \textsc{A. Avil\'es, J. Rodr\'iguez and P. Tradacete}, \textit{The free Banach lattice generated by a Banach space}, J. Funct. Anal. 274 (2018), 2955--2977.

\bibitem{AR-A19}
\textsc{A.~Avil\'es and J.~D.~Rodr\'{i}guez Abell\'an}, \textit{The free Banach lattice generated by a lattice}. Positivity 23 (2019), 581--597.

\bibitem{AT} 
\textsc{A. Avil\'es and P. Tradacete}, \textit{Amalgamation and injectivity in Banach lattices}. 
Int. Math. Res. Not. IMRN 2023, no. 2, 956--997.

\bibitem{BT} \textsc{E.~Bilokopytov and V.~G.~Troitsky}, \textit{Uniformly closed sublattices of finite codimension}. Linear Multilinear Algebra (2023), 1--19.

\bibitem{DMCRARZ} \textsc{S.~Dantas, G. Mart\'inez-Cervantes, J.D. Rodr\'iguez Abell\'an and A. Rueda Zoca}, \textit{Norm-attaining lattice homomorphisms}, Rev. Mat. Iberoam. 38 (2022), 981--1002.

\bibitem{GLTT}
\textsc{E. Garc\'ia-S\'anchez, D. Leung, M. Taylor and P. Tradacete}, \textit{Banach lattices with upper p-estimates: free and injective objects}, Math. Ann. (2024). 


\bibitem{GdR}
\textsc{A. Goullet de Rugy}, \textit{La structure id\'eale des M-espaces}, J. Math. Pures Appl. (9)   51 (1972), 331--373.


\bibitem{HMST} \textsc{D.~de~Hevia, G.~Mart\'{i}nez-Cervantes, A.~Salguero-Alarc\'{o}n and P.~Tradacete}, \textit{A counterexample to the complemented subspace problem in Banach lattices}, preprint, \url{https://arxiv.org/abs/2310.02196}

\bibitem{J} \textsc{R.~C.~James}, \textit{Characterizations of reflexivity}, Studia Math. 23 (1964), 205--216.

\bibitem{JLTTT}
\textsc{H. Jard\'on-S\'anchez, N. J. Laustsen, M. A. Taylor, P. Tradacete and V. G. Troitsky,}

\textit{Free Banach lattices under convexity conditions},
Rev. R. Acad. Cienc. Exactas F\'{i}s. Nat. Ser. A Mat. RACSAM 116 (2022), no. 1, Paper No. 15, 25 pp.

\bibitem{Kakutani-AM spaces}
\textsc{S.~Kakutani}, \emph{Concrete representations of abstract M spaces}, Ann. of Math. 42 (1941), pp. 994--1024.

\bibitem{Lacey-book}
\textsc{H.~E.~Lacey}, \emph{The isometric theory of classical Banach spaces}, Springer-Verlag, Berlin, 1974.

\bibitem{LW} \textsc{H.~E.~Lacey and P.~Wojtaszczyk}, \textit{Nonatomic Banach lattices can have $\ell_1$ as a dual space.} Proc. Amer. Math. Soc. 57 (1976), 79--84.

\bibitem{LL} \textsc{A. J. Lazar and J. Lindenstrauss}, \textit{On Banach spaces whose duals are $L_1$ spaces.} Israel J. Math. 4 (1966), 205--207.


\bibitem{LinTza2} \textsc{J. Lindenstrauss and L. Tzafriri}, \textit{Classical Banach Spaces II}, Springer-Verlag, 1979.

\bibitem{Lotz}
\textsc{H.~P.~Lotz}, \textit{Extensions and Liftings of Positive Linear Mappings on Banach Lattices}, Trans. Amer. Math. Soc. 211 (1975): 85--100.


\bibitem{LV} \textsc{J.~L.~P.~Luiz and V.~C.~C.~Miranda}, \textit{On positive norm-attaining operators between Banach lattices}, preprint, \url{https://www.arxiv.org/abs/2409.14625}.


\bibitem{Meyer} \textsc{P.~Meyer-Nieberg}, \textit{Banach lattices}, Springer-Verlag, 1991.

\bibitem{OTTT}
\textsc{T.~Oikhberg, M.~A.~Taylor, P.~Tradacete and V.~G.~Troitsky}, \textit{Free Banach lattices}, J. Eur. Math. Soc. (2024), published online first DOI 10.4171/JEMS/1552.

\bibitem{OT} \textsc{T.~Oikhberg and M.~A.~Tursi}, \textit{Order extreme points and solid convex hulls}, The mathematical legacy of Victor Lomonosov. Operator theory. Berlin: De Gruyter. Adv. Anal. Geom. 2 (2020), 297--316.

\bibitem{dPW15} \textsc{B.\ de Pagter and  A. W.\  Wickstead}, \textit{Free and projective Banach lattices}, Proc. Royal Soc. Edinburgh Sect. A 145 (2015), 105--143.

\end{thebibliography}
\end{document}